\newtheorem{thm}{Theorem}[section]
\newtheorem{lemma}[thm]{Lemma}
\newtheorem{cor}[thm]{Corollary}
\newtheorem{remark}[thm]{Remark}
\newtheorem{definition}[thm]{Definition}
\numberwithin{equation}{section}
\def\RR{\mathbb R}
\def\ds{\displaystyle}
\def\R{\boldsymbol{R}}
\def\Q{\boldsymbol{Q}}
\def\GP{\boldsymbol{GA}}
\def\Gupiu{\boldsymbol{Gu}}
\def\Gumeno{\boldsymbol{gu}}
\def\Gspiu{\boldsymbol{Gs}}
\def\Gsmeno{\boldsymbol{gs}}
\def\Hupiu{\boldsymbol{Gu}}
\def\Humeno{\boldsymbol{gu}}
\def\Hspiu{\boldsymbol{Gs}}
\def\Hsmeno{\boldsymbol{gs}}
\def\Ls{\boldsymbol{L2}}
\def\Lu{\boldsymbol{L1}}
\def\x{\boldsymbol{x}}
\def\la{\lambda}
\def\ep{\varepsilon}
\newcommand{\eu}{\textrm{e}}
\newcommand{\lir}{{\lim_{r \to \infty}}}
\newcommand{\liro}{{\lim_{r \to 0}}}
\newcommand{\lit}{{\lim_{t \to +\infty}}}
\newcommand{\lito}{{\lim_{t \to -\infty}}}
\newcommand\bs[1]{{\boldsymbol{#1}}}
\DeclareMathAccent{\cll}{\mathalpha}{operators}{'027}
\DeclareMathOperator{\Intera}{Int}
\newcommand{\Rsol}{$\mathcal {R}$}
\newcommand{\Ssol}{$\mathcal {S}$}
\newcommand{\fdsol}{${\sf fd}$}
\newcommand{\sdsol}{${\sf sd}$}
\newcommand\sol[3]{${\cal #1}\!\stackrel{{#2}}{\_}\!{\sf #3d}$}
\begin{document}
\title{Entire solutions of superlinear problems with indefinite weights and Hardy potentials.}
\author {Matteo Franca\thanks{Dipartimento di Scienze Matematiche,
Universit\`a Politecnica delle Marche, Via Brecce Bianche 1, 60131 Ancona -
Italy. Partially supported by G.N.A.M.P.A. - INdAM (Italy) and
MURST (Italy)} \hspace{1mm} and
Andrea Sfecci
\thanks{Dipartimento di Scienze Matematiche,
Universit\`a Politecnica delle Marche, Via Brecce Bianche 1, 60131 Ancona -
Italy. Partially supported by G.N.A.M.P.A. - INdAM (Italy)}
}
\date{}
  \maketitle
\pagestyle{myheadings}

\begin{abstract}
We provide the structure of regular/singular fast/slow decay radially symmetric solutions for a class of superlinear elliptic equations with an indefinite weight on the nonlinearity $f(u,r)$. In particular we are interested in the case where $f$ is positive
in a ball and negative outside, or in the reversed situation.
 We extend the approach to elliptic equations in presence of Hardy potentials. By the use of Fowler transformation we study the corresponding dynamical systems, presenting the construction of invariant manifolds when the global existence of solutions is not ensured.
\end{abstract}
\vspace{5mm} \textbf{Key Words:} $\,$ supercritical equations, Hardy potentials, radial
solutions, regular/singular ground
states, Fowler transformation, invariant manifold, continuability.\\
\textbf{MR (2010) Subject Classification}: $\,$ 35j75, 35j91, 37d10, 34c37.
   \vspace{5mm}
\section{Introduction}
A first   purpose of this paper is to study the properties of  radial solutions for equations of the form
\begin{equation} \label{laplace}\tag{L}
\Delta u+f(u,r)=0
\end{equation}
where $u: \RR^n \to \RR$, $n>2$,  $r=|x|$,   and $f:\RR \times (0,+\infty)\to \RR$ is a differentiable function  which is null for $u=0$
  and super-linear in $u$.
Since we just deal with radial solutions we will indeed consider the following singular ordinary differential equation
\begin{equation} \label{eq.na}\tag{Lr}
u''+ \frac{n-1}{r} u' +f(u,r)=0\,,
\end{equation}
where, abusing the notation, we have set $u(r)=u(x)$ for $|x|=r$, and
$'$ denotes differentiation with respect to $r$.

 Radial solutions play a key role for~\eqref{laplace}, since in many cases, e.g. $k(r)\equiv K>0$,
 positive solutions have to be radial (but also in many situations in which $k$ is allowed to vary, see e.g.~\cite{B1,GNN,LN}).
 They are also crucial to determine the threshold between fading and blowing up initial data in the associated parabolic problem,
 see e.g.~\cite{DLL,WWW}.

 In this article we   are mainly interested in classifying positive and nodal solutions
when $f(u,r)$ is  negative  for $r$ small and  positive for $r$ large,
or in the opposite situation.
The prototype for the nonlinearity we are interested in takes the form:
\begin{equation} \label{regular}
f(u,r)=k(r) u|u|^{q-2}
\end{equation}
where $k$ is a continuous   function,  which is either  negative  in a ball and  positive  outside and $q>2^*$,
or we have the reversed sign condition and $2_*<q<2^*=\frac{2n}{n-2}$, where  $2_*:=\frac{2(n-1)}{n-2}<2^*:=\frac{2n}{n-2}$,
are respectively  the Serrin and the Sobolev critical exponents.

The behavior of solutions of~\eqref{eq.na}, with $f$ as in~\eqref{regular},
changes drastically according to the sign of $k$, when $q>2$.
When $k(r)<0$ for any $r>0$,  positive solutions are convex,
and their maximal interval of continuation may
be bounded either from above or from below or both.
On the other hand if $k(r)>0$ all the solutions of~\eqref{eq.na} are continuable for any $r>0$;
further, if $k(r)>0$ the structure of positive solutions of~(\ref{eq.na}) changes drastically when the exponent $q$
 in~(\ref{regular}) passes through
 some critical values, such as    $2_*$  and  $2^*$,
  see e.g. \cite{F2,NS}.
 In fact new and more complex situations arise  when the non-linearity   exhibits
 both subcritical an supercritical behavior with respect to these exponents, see e.g.
 \cite{BDF,BJ2,B1, Fjdde,Fdie,Y1996}, for a far from being exhaustive bibliography.
 In fact we have an interaction between the exponent
 and the asymptotic behavior of $k$.
  Roughly speaking, if $f$ is of type~(\ref{regular}) and $k(r)$ behaves like a positive power,
 then the critical exponents get smaller, while they get larger  if $k$ behaves like a negative power.
 E.g., if $k(r)= r^{\delta}$ then the Sobolev critical exponent becomes $2^*_{\delta}=2\frac{n+\delta}{n-2}$.

With very weak assumptions, definitively  positive solutions exhibit two behaviors as $r \to 0$
 and as $r \to \infty$ when $k(r)>0$. Namely  $u(r)$ may be
 a \emph{regular solution}, i.e. $u(0)=d>0$ and $u'(0)=0$,
or a \emph{singular solution}, i.e. $\liro u(r)=+\infty$;
a \emph{fast decay (f.d.) solution}, i.e. $\lir u(r) r^{n-2}=L$,
 or a \emph{slow decay (s.d.) solution}, i.e. $\lir u(r)r^{n-2}=+\infty$.
 We emphasize that in many situations the behavior of singular and slow decay
 solutions   can be specified better  (cf. Remark~\ref{sing}).

 In the whole paper we use the following notation: we denote by $u(r,d)$ the regular solution of~(\ref{eq.na})
such that $u(0,d)=d$, and by $v(r,L)$ the fast decay solution such that
$\lir r^{n-2}v(r,L)=L$.
Moreover we call \emph{ground states} (G.S.),  regular positive solutions $u(r)$ defined for any $r \ge 0$ and such that $\lir u(r)=0$, and
\emph{singular ground states} (S.G.S.) positive singular solutions $v(r)$
defined for any $r > 0$ and such that $\liro v(r)=\infty$, $\lir v(r)=0$.

\medbreak

In this paper we continue the discussion, begun with~\cite{FmSa},
which is mainly focused on the case where $f$ is of type~\eqref{regular}, $q>2^*$, $k(r)$ is discontinuous and equals
$1$ inside a ball and $-1$ outside.  This kind of equation is a special reaction-diffusion equation, where
the reaction, modeled by $f$, is assumed to have a source effect inside a ball and an absorption effect outside.
So it can describe, e.g., the temperature $u$ in presence of a nonlinear reaction producing energy (taking place in a bounded box) and its inverse absorbing it (taking place in the environment where the box is immersed), both  heat regulated.
As specified in~\cite{FmSa} it can also describe the density of a substance
subject to diffusion and to a nonlinear reaction and its inverse, see also  \cite[§7]{Mu2}. The inhomogeneity
may be induced by the presence of an activator or an inhibitor.

In~\cite{FmSa} the purpose was to prove existence and exact multiplicity for regular solutions with f.d. and with s.d., and
to deduce their nodal properties, but just for a very specific  example, discontinuous in $r$.

Here we want to show that the case described in~\cite{FmSa} is the prototype for a large class of nonlinearities $f$. So we
 relax the requirement on $k(r)$;  in particular we   assume it  to be smooth, and we extend the results
to a wider   family of potentials $f$, whose
 main representative is given by
 \begin{equation}\label{unaeffe}
 \begin{array}{lll}
 f(u,r)= K(r)     u|u|^{q-2}  & q>2^* &(a) \\
  f(u,r)= -K(r)    u|u|^{q-2}  & 2_*<q<2^* &(b)
 \end{array}
 \end{equation}
 where  $K(r)$ changes sign one time: in particular $K(r)<0$ if $r<R$, $K(r)>0$ if $r>R$, for a certain $R$.
 Section~\ref{MAP} will be devoted to a deeper analysis of the possible nonlinearities we can  deal with, but we wish to emphasize that our result is new for this kind of nonlinearity, too.

 \medbreak

We emphasize that the presence of G.S. with f.d. is due to the coexistence of source and absorption effects
 (i.e. $f$ changes sign).
In literature there
 are many results on the structure of radial solutions for Laplace equations
with indefinite weights $k$,  see e.g.~\cite{Bae2,BJ2,CFG}.  However, these papers
are concerned with phenomena which are found when $k$ is a positive function,
and which persist even if
$k$ becomes      negative in some regions.
The structure results we find can just take place if we have a change in the
sign of $k$: if $q$ is either smaller or larger than $2^*$ there are no G.S. with fast decay, neither if $k(r) \equiv K>0$, nor if
  $k(r) \equiv K<0$. In fact, the structure of the solutions of~\eqref{eq.na} described in Corollaries \ref{HC} and \ref{HD}, reminds
  of the situation in which $q=2^*$ and we have a positive $k$ which behaves like a positive power for $r$ small and
  a negative power for $r$ large, see e.g.~\cite{DF,Y1996}.
  In the same  direction goes \cite{BGM}, which
  proves existence results (using a variational approach) which hold  just when
  the
  nonlinearities have  sign-changing weights; however in  \cite{BGM} the authors consider
  bounded domains and just in the subcritical case.  Further  in \cite{DMM,MaMa,Serena1} and in references therein
   the reader can find several nice and sharp structure results for sign-changing nonlinearities, even for more general operators   ($p$-Laplace, relativistic and mean curvature),
   in the framework of oscillation (and non-oscillation) theory,
  but for exterior domains, i.e. for solutions defined, say for $r>1$.

Performing this generalization with respect to~\cite{FmSa}   we pay two prizes: firstly
we can just give existence and multiplicity results, but we lose the control of uniqueness and exact multiplicity
since we mainly ask for asymptotic conditions; secondly  we have to face many technical problems and the discussion is more involved.
The major one is the following: asking for $f$ to be negative and possibly superlinear for $u$ large, we allow the existence
of non-continuable solutions, whose presence causes almost no difficulties for the special nonlinearities considered in~\cite{FmSa},
but it is a crucial problem here.

In fact the presence of non-continuable solutions, which are typical for the nonlinearity considered, rises a challenging
problem from the theoretical point of view. In fact
 we cannot apply the already established
invariant manifold theory  for non-autonomous systems
(see e.g. \cite{CoLe,Co78,Jsell}).
  In the appendix we perform a first step in order to   extend this theory
to the case where  non-continuable solutions are allowed.
 As far as we are aware
 this is the first time where such a problem is considered, and we think this can be a contribution
 from a methodological point of view to invariant manifold theory for non-autonomous dynamical
 systems.

\medbreak

Our analysis is directly performed for the following more general differential equation
\begin{equation}\label{Hlaplace}\tag{H}
\Delta u + \frac{h(r)}{r^2} u +f(u,r) = 0\,,
\end{equation}
and for its radial counterpart
\begin{equation}\label{hardy}\tag{Hr}
u''+ \frac{n-1}{r} u'+ \frac{h(r) }{r^2}u +f(u,r)=0\,.
\end{equation}
We assume that $h$ is a differentiable function satisfying the following requirement, which will be assumed
in the whole paper:
  $$\textbf{H} \qquad \qquad \begin{array}{rl}
  h(r) < \frac{(n-2)^2}{4} \,, & \text{for every } r\in (0,+\infty)\,, \\
  h(0)=\eta< \frac{(n-2)^2}{4} \, , &  \frac{h(r)-\eta}{r} \in L^1(0,1] \,, \\
 \lir h(r)= \beta < \frac{(n-2)^2}{4}\,,&  \frac{h(r)-\beta}{r} \in L^1[1,+\infty) \, .
   \end{array} $$
   The introduction in Laplace equation of the additional term $\frac{h(r)}{r^2} u$, often referred to as \emph{Hardy potential}, has raised
   a great interest recently, see e.g.~\cite{Bae1,FGazz,Terracini}, and we think is another main point of interest in our paper.
    Usually in literature the case $h \equiv \eta$
   is considered, with the requirement that $\eta \le \eta_c := \frac{(n-2)^2}{4}$ (the value $\eta_c$ is again critical).
   The restriction $\eta \le\eta_c$ is necessary in order to have definitively positive solutions, see e.g.~\cite{Cirstea}, either for $r$
small or for $r$ large, and $\eta_c$ can be interpreted as the first eigenvalue of the $\Delta u+u/r^2$, see Section~1 in~\cite{Terracini}.  Here we give a dynamical interpretation
of this assumption.
 Equation \eqref{hardy} has been subject to deep investigation for different type of $f$, see e.g. \cite{Bae1,FMT,FMT1,FGazz,Terracini}.
Usually   $h$ is assumed to be a constant, and there are very few results concerning the case where $h$ actually varies;
 however Terracini in~\cite{Terracini} and Felli et al. in~\cite{FMT} considered the case where $h$ is a function, depending in fact on its
angular coordinates (to model a magnetic field).

A  consequence of the presence of the Hardy term is a shift on the Serrin critical exponent, and the appearance of
a new critical value in the supercritical regime. More precisely, if $h(r) \equiv \eta< \eta_c=\frac{(n-2)^2}{4}$ we define
\begin{equation}\label{def2eta}
2_*(\eta):=2 \frac{n+\sqrt{(n-2)^2-4 \eta}}{n-2+\sqrt{(n-2)^2-4\eta}}
\end{equation}
(which gives back $2_*$ if $\eta=0$), and
\begin{equation}\label{defIeta}
{\rm I}(\eta):= \begin{cases}
+\infty & \text{if } \eta\leq 0\\
2 \frac{n-\sqrt{(n-2)^2-4 \eta}}{n-2-\sqrt{(n-2)^2-4\eta}} & \text{if } 0<\eta<\frac{(n-2)^2}{4}\,,
\end{cases}
\end{equation}
see, e.g.~\cite{Cirstea}.
Notice that $ \lim_{\eta\to\eta_c} 2_*(\eta)= 2^* = \lim_{\eta\to\eta_c} {\rm I}(\eta)$.

The presence of the Hardy term, affects greatly the asymptotic behavior of the solution.
In fact if $f(u,r)>0$ we continue to have two possible behavior for definitively positive solutions
either for $r$ small or for $r$ large.
Let us   set
\begin{equation}\label{defKappa}
\kappa(\eta):= \frac{(n-2)- \sqrt{(n-2)^2-4\eta}}{2} \,;
\end{equation}
we  introduce the following terminology.

 \begin{definition}\label{defRFS}
\begin{itemize}
\item A \Rsol-solution $u(r,d)$ satisfies $\liro u(r,d) r^{\kappa(\eta)} = d\in\RR$, while a \Ssol-solution $u$ satisfies $\liro u(r) r^{\kappa(\eta)} = \pm\infty$.
\item A \fdsol-solution $v(r,L)$ satisfies $\lir v(r,L)r^{n-2-\kappa(\eta)} = L\in\RR$, while a \sdsol-solution $u$ satisfies $\lir u(r)r^{n-2-\kappa(\eta)} = \pm \infty$.
\item a \sol Rkf solution $u(r,d)=v(r,L)$ is both a \Rsol-solution and a \fdsol-solution having $k$ nondegenerate zeros. We define similarly  \sol Rks solution   $u(r,d)$, \sol Skf solution $v(r,L)$. When we do not indicate the value $k$, e.g.  \sol S{}f,
we mean any solution with these asymptotic properties disregarding its number of zeroes.

\end{itemize}
\end{definition}

In case of equation~\eqref{eq.na} we can recognize respectively regular fast decay, regular slow decay and singular fast decay solutions having $k$ nondegenerate zeros.

Note that $0<\kappa(\eta) < \frac{n-2}{2}$ if $\eta>0$ and that $\kappa(\eta)<0$ if $\eta <0$, therefore
\Rsol-solutions  are  unbounded if $\eta>0$. We also emphasize that
bounded solutions do not exist for $\eta>0$, and that \Ssol-solutions are anyway larger than \Rsol-solutions for $r$ small,
see Remarks~\ref{differenze1},~\ref{differenze1.5},~\ref{differenze3} for more details.

This fact may cause relevant problems in applying
variational or functional techniques, but in fact it  finds an easy explanation with our approach.
However the structure of positive solutions is not greatly altered by the presence of the   Hardy potential
so that we can give here a  unified approach for both~\eqref{laplace} and~\eqref{Hlaplace}.

\medbreak

Consider a function $f$ as in \eqref{unaeffe}; we state the following assumption for $K$:
\begin{description}
  \item[\bf{K}] Assume that $K$ is $C^1$ and there is $R>0$ such that
   $K(r)<0$ for $0<r<R$ and $K(r)>0$ for $r>R$.
  Further assume that
  \begin{equation}\label{Kasympt}
  \begin{array}{ccccc}
    K(r)&=&K(0) r^{\delta_0} +o(r^{\delta_0}) & \textrm{ as } r \to 0 \, , & \textrm{ and } \\
     K(r)&=&K(\infty) r^{\delta_{\infty}} +o(r^{\delta_{\infty}}) & \textrm{ as } r \to \infty \, . & \,
     \end{array}
  \end{equation}
  where $K(0)<0<K(\infty)$ and $\delta_0,\delta_{\infty}>-2$. Further  there is $\varpi>0$ (small) such that
  $\liro r^{-\varpi}\frac{d}{dr} [K(r) r^{-\delta_0}]=0$, and $\lir  r^{\varpi}\frac{d}{dr} [K(r) r^{-\delta_{\infty}}]=0$.

\end{description}
Note that the weak assumption on the derivative of $K$ is just technical.
We need to introduce the following parameters which take into account of the shift
on the critical exponent due to the presence of the spatial dependence:
\begin{equation}\label{elleUeS}
    l=l(q,\delta)=2 \frac{q+\delta}{2+\delta}
\end{equation}
and notice that $l(q,0)=q$.

We postpone the statement of our main results in a more general framework to Section~\ref{secST}. We just propose here two corollaries which follow directly from Theorems~\ref{tm3} and~\ref{tm4} and apply to nonlinearities introduced in~\eqref{unaeffe}.

\begin{cor}\label{HC}
Assume $\bs{\rm H}$, let $f$ be as in~(\ref{unaeffe}a) and suppose $K(r)$ satisfies \textbf{K}.
Set $l_u=l(q,\delta_0)$ and $l_s=l(q,\delta_{\infty})$, and assume $2_*(\eta)<l_u<{\rm I}(\eta)$,
$2^*<l_s<{\rm I}(\beta)$.
Then there is an increasing sequence  $(A_k)_{k\geq 0}$  such that
 $u(r,A_{k})$ is a \sol Rkf.
Moreover
 $u(r,d)$ is a \sol R0s for any $0<d<A_0$, and there is $A^*_k \in [A_{k-1},A_k)$ such that
 $u(r,d)$ is a \sol Rks whenever $A^*_k<d<A_{k}$, for any $k \ge 1$.
\end{cor}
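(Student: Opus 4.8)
The plan is to read the statement off the general structure theorems, after checking that the nonlinearity~\eqref{unaeffe} under hypothesis~\textbf{K} falls within their scope, and then to translate the resulting dynamical picture into the claimed structure. First I would apply the Fowler transformation to~\eqref{hardy}, turning it into an asymptotically autonomous planar system whose ``time'' is $t=\ln r$. Hypotheses~$\bs{\rm H}$ and~\textbf{K}, together with the expansions~\eqref{Kasympt} with $\delta_0,\delta_\infty>-2$ and the mild decay imposed on $\tfrac{d}{dr}[K(r)r^{-\delta}]$, guarantee that this system admits two autonomous limit systems: one as $t\to-\infty$ (that is, $r\to 0$), governed by $\eta$ and $\delta_0$ and hence by the shifted exponent $l_u=l(q,\delta_0)$, and one as $t\to+\infty$ (that is, $r\to\infty$), governed by $\beta$ and $\delta_\infty$ and hence by $l_s=l(q,\delta_\infty)$. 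The inequalities $2_*(\eta)<l_u<{\rm I}(\eta)$ and $2^*<l_s<{\rm I}(\beta)$ are exactly the parameter conditions under which these limit systems exhibit the fixed-point and eigenvalue configuration required by Theorems~\ref{tm3} and~\ref{tm4}; so the bulk of this step is the reduction, after which the theorems apply.

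Once inside that framework I would identify the invariant objects of Definition~\ref{defRFS}. The regular solutions $u(r,d)$ fill the one-dimensional unstable manifold attached to the origin at $t\to-\infty$, naturally parametrized by $d=\liro u(r,d)\,r^{\kappa(\eta)}$, while the \fdsol-solutions fill the stable manifold attached at $t\to+\infty$. A \sol{R}{k}{f} is then precisely a trajectory lying on both manifolds whose phase curve winds $k$ times around the origin, the number of windings being the number of nondegenerate zeros of $u$.

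The heart of the argument is a shooting/continuity argument in the parameter $d$, tracking the number of sign changes of $u(\cdot,d)$ and the way its phase trajectory approaches the origin as $r\to\infty$. For $d$ small, $u(r,d)$ stays small and behaves essentially like the regular solution of the linear part; since $l_s>2^*$ places us in the supercritical, source-dominated regime at infinity, this generic behavior is slow decay with no zero, which gives a \sol{R}{0}{s} on $(0,A_0)$. Increasing $d$ loads the solution: in the inner region $r<R$, where $f<0$ acts as absorption and positive solutions are convex, $u$ grows, and once it enters $r>R$, where $f>0$, it can turn around and oscillate. The number of zeros is nondecreasing in $d$, and $A_k$ is the value at which the unstable-manifold trajectory first lands exactly on the stable manifold after producing $k$ zeros, i.e.\ $u(r,A_k)$ is a \sol{R}{k}{f}; just below $A_k$ the trajectory overshoots the stable manifold and is slow decay with $k$ zeros, furnishing the \sol{R}{k}{s} solutions on $(A_k^*,A_k)$. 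Monotonicity of the zero count then yields the increasing sequence $(A_k)$.

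The main obstacle is the one flagged in the introduction. For $d$ in the (possibly empty) gap $(A_{k-1},A_k^*)$ the solution need not be continuable: in $r<R$, where $f<0$ is absorptive with $q>2^*$, a convex positive solution may blow up at a finite radius, so the corresponding phase trajectory escapes in finite $t$ and the classical invariant-manifold theory for non-autonomous systems does not apply. The delicate points are therefore (i) showing that the unstable and stable manifolds remain well defined as invariant manifolds in the presence of non-continuable solutions, which is where the extension developed in the appendix is needed, and (ii) proving that the number of zeros is a well-controlled function of $d$ across these gaps, so that the $A_k$ are genuinely ordered and one can locate $A_k^*\in[A_{k-1},A_k)$. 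Granting these two facts, which are the content of Theorems~\ref{tm3} and~\ref{tm4}, the stated structure follows.
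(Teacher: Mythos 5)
Your proposal is correct and takes essentially the same route as the paper: Corollary~\ref{HC} is obtained there by checking that $f$ as in~(\ref{unaeffe}a) under \textbf{K} fits the hypotheses of Theorem~\ref{tm3} --- the expansions~\eqref{Kasympt} produce the autonomous limit systems (so $\Gspiu$ holds with $K(\infty)>0$, and $\Gumeno$ holds), the stated exponent ranges are exactly those required, and the sign change of $K$ at $R$ yields $\Lu$ with $\mathfrak{T}=\ln R$ --- after which the theorem is applied (only Theorem~\ref{tm3} is needed for this corollary; Theorem~\ref{tm4} serves Corollary~\ref{HD}). One caveat: your intermediate claim that the number of zeros of $u(\cdot,d)$ is nondecreasing in $d$ is not something the paper establishes (cf.\ Remark~\ref{nontutti} and the paper's explicit renunciation of uniqueness and exact multiplicity), but since you ultimately defer to the theorems for the shooting construction, this heuristic carries no logical weight in your reduction.
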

\begin{cor}\label{HD}
Assume $\bs{\rm H}$, let $f$ be as in~(\ref{unaeffe}b) and suppose $K(r)$ satisfies \textbf{K}.
Set $l_u=l(q,\delta_0)$ and $l_s=l(q,\delta_{\infty})$, and assume $2_*(\eta)<l_u<2^*$,
$2_*(\beta)<l_s<{\rm I}(\beta)$. Then there is an increasing sequence $(B_k)_{k\geq 0}$ such that $v(r,B_{k})$ is a \sol Rkf.
 Moreover, $v(r,L)$
is a  \sol S0f for any $0<L<B_0$, and there is $B^*_k \in [B_{k-1},B_k)$ such that
 $v(r,L)$ is a \sol Skf whenever $B^*_k<L<B_{k}$, for any $k \ge 1$.
Consequently, there is an increasing  sequence $(A_k)_{k\geq 0}$ such that $u(r,A_k)$ is a \sol Rkf for any $k \ge 0$.
\end{cor}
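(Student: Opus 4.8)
The plan is to read Corollary~\ref{HD} as the specialization of the general Theorems~\ref{tm3} and~\ref{tm4} to the model nonlinearity $f=-K(r)\,u|u|^{q-2}$, so the proof splits into a \emph{verification} part and a \emph{translation} part. First I would apply the Fowler-type transformation (with $t=\ln r$) used in Section~\ref{secST} to rewrite~\eqref{hardy} as a planar, asymptotically autonomous dynamical system, and check that assumptions~\textbf{H} and~\textbf{K} force convergence to the two autonomous limit systems: as $r\to0$ (that is, $t\to-\infty$) the limit is governed by the exponent $l_u=l(q,\delta_0)$ and the Hardy constant $\eta=h(0)$, while as $r\to\infty$ it is governed by $l_s=l(q,\delta_\infty)$ and $\beta=\lir h(r)$. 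The $L^1$ tails imposed in~\textbf{H} and the decay condition on $\frac{d}{dr}[K(r)r^{-\delta}]$ in~\textbf{K} are precisely what makes these convergences integrable, hence what lets the non-autonomous system inherit the structure of its limits. The hypotheses $2_*(\eta)<l_u<2^*$ and $2_*(\beta)<l_s<{\rm I}(\beta)$ then place the two limit systems in exactly the parameter ranges required by the theorem governing the sign pattern~(\ref{unaeffe}b). This identifies the relevant invariant objects: the \Rsol-solutions form a branch of the unstable manifold $W^u$ of the origin of the $r\to0$ limit system, and the \fdsol-solutions form a branch of the stable manifold $W^s$ of the origin of the $r\to\infty$ limit system.

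Second comes the mechanism producing the sequences. Since the statement is phrased through $v(r,L)$, I would parametrize by the fast-decay amplitude $L$, i.e. follow the branch of $W^s$ backward from $r=+\infty$ toward $r=0$. In the absorption region $r>R$ (where $K>0$, so $f<0$) solutions are continuable and stay controlled; upon entering the source region $r<R$ (where $K<0$, $f>0$, and the subcritical range of $l_u$ makes the relevant critical point a focus) the trajectory begins to wind around it, which is what creates zeros of $u$. For $L$ small the backward trajectory stays close to the origin's invariant structure and reaches $r=0$ with the singular zero-free profile, giving a \sol S0f for $0<L<B_0$. I would then track the winding (rotation) number of the trajectory about the axis $\{u=0\}$ as $L$ increases: it is monotone and unbounded in $L$, and each zero is nondegenerate because it is a transversal crossing. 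The values $L=B_k$ at which the backward trajectory is captured exactly onto the \Rsol-set (the branch of $W^u$) after $k$ crossings give the \sol Rkf solutions $v(r,B_k)$, and monotonicity of the winding number makes $(B_k)$ increasing. Letting $B^*_k$ be the value at which the $k$-th zero is created between two consecutive captures, I obtain that $v(r,L)$ is a \sol Skf for $B^*_k<L<B_k$.

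Third, the ``consequently'' clause. Each \sol Rkf solution is simultaneously a regular and a fast-decay solution, so $v(r,B_k)=u(r,A_k)$ for a unique initial value $A_k$. I would argue that along the regular family the number of nondegenerate zeros is itself monotone in $d$ (larger initial amplitude forces more oscillations inside the source region before the solution can settle onto $W^s$), so the thresholds inherit the ordering and $(A_k)$ is increasing, with $u(r,A_k)$ a \sol Rkf for every $k\ge0$.

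The main obstacle I expect is the one flagged in the Introduction: because $f>0$ is superlinear in the source region $r<R$, the backward-continued trajectories of $W^s$ (and the forward regular trajectories) may be \emph{non-continuable}, blowing up at a finite $r\in(0,R)$ rather than reaching $r=0$. This both breaks the classical non-autonomous invariant-manifold theory used to define $W^u$ and $W^s$, and threatens the continuity and monotonicity of the winding number in $L$ on which the whole count rests. Overcoming it requires the extended invariant-manifold construction of the Appendix, which keeps $W^u$ and $W^s$ well defined and lets the rotation number vary continuously with $L$ across potential blow-ups; distinguishing genuine finite-$r$ blow-up from the admissible singular behavior $\liro v(r)\,r^{\kappa(\eta)}=\pm\infty$ at $r=0$ is the delicate technical core, while everything else is bookkeeping of zeros and a continuity/monotonicity argument in the parameter.
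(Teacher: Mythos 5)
Your overall skeleton does match the paper's: Corollary~\ref{HD} is obtained by specializing Theorem~\ref{tm4} (checking that~(\ref{unaeffe}b) together with \textbf{K} yields $\Gupiu$ with $K>0$ and $l_u\in(2_*(\eta),2^*)$, $\Gsmeno$ and $\Ls$ with $\mathfrak T=\ln R$), and Theorem~\ref{tm4} is in turn the Kelvin inversion of Theorem~\ref{tm3}, whose proof is the dynamical/topological argument you sketch, with the Appendix supplying the invariant manifolds in the absence of continuability; working directly on the subcritical side instead of invoking Kelvin inversion is a legitimate variant. The genuine gap is your counting mechanism: you base both the existence of the captures $v(r,B_k)$ and the ordering of $(B_k)$, $(B_k^*)$, $(A_k)$ on the claim that the winding number of the backward trajectory is \emph{monotone} (and continuous) in $L$, and likewise that the number of zeros of $u(\cdot,d)$ is monotone in $d$. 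The paper neither proves nor uses any such monotonicity, and its own discussion shows it may fail: Remark~\ref{nontutti} and Figure~\ref{sketch}(b) exhibit the possibility that the non-spiraling manifold crosses the \emph{same} turn of the spiral several times, which is exactly why the statement only asserts $B^*_k\in[B_{k-1},B_k)$ (possibly $B^*_k>B_{k-1}$) and why the Introduction explicitly renounces uniqueness and exact multiplicity. Had monotone winding been available, one would conclude $B^*_k=B_{k-1}$ and exact multiplicity --- a strictly stronger statement than the one being proved, which should itself be a warning sign.

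What replaces monotonicity in the paper is purely topological, and it is the actual core of the proof. Lemma~\ref{spiral} shows that one family of manifolds (in the HD picture, the unstable manifolds $W^{u,\pm}_{l_u}(\tau)$ generated by the subcritical source region near $r=0$) winds around the origin infinitely many times for every $\tau\ge\mathfrak T$; Lemma~\ref{wu.hardy}, via the Wazewski-type barriers of Lemma~\ref{box}, shows that the other family is unbounded and confined to a fixed angular sector. Hence, in the stripe of polar coordinates, the unbounded sector-confined curve must exit each region $\hat F_j$ bounded by consecutive turns of the spiral, producing at least one crossing per turn (Lemma~\ref{intersezioni}). The values $B_k$ are then \emph{defined} as the first such crossings in the $L$-parametrization --- increasing by construction, using Lemma~\ref{paramreg} to know that $L$ is strictly increasing along the manifold --- while $B^*_k$ is the last crossing before the next capture; the zero count of $v(r,B_k)$ comes from the angle bookkeeping of Lemma~\ref{gsfd}; the classification of $v(r,L)$ for $B^*_k<L<B_k$ as \sol Skf comes from the invariant-region argument of Remark~\ref{key} and Lemma~\ref{regularslow} (backward attraction to the focus $\boldsymbol{P^{\pm}}$), not from continuity of a winding number across blow-ups; and $(A_k)$ is increasing because the capture points lie on successive turns of the spiraling manifold, along which $d$ is strictly increasing again by Lemma~\ref{paramreg}. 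Your proposal needs this replacement for its second and third steps; the remainder of your outline (verification of the hypotheses, the role of the extended invariant-manifold theory, the duality between \Rsol/\fdsol\ and \Ssol/\sdsol\ solutions) is consistent with the paper.
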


Notice that both the corollaries provide the existence of a positive G.S. with fast decay (the \sol R0f) for equation~\eqref{laplace}. The first one gives also existence of positive G.S. with slow decay (the \sol R0s's) and the second the existence of positive S.G.S. with fast decay (the \sol S0f's).

The previous corollaries focus on solutions which are {\em positive near zero}; by the way similar statements hold for {\em negative near zero} solutions.

In the proofs we apply the classical Fowler transformation, to pass from~\eqref{hardy} to a system,
 and we apply   phase plane analysis and techniques from the
 theory of invariant manifold for non-autonomous
systems, following the way paved by \cite{ JPY1,JPY2,JK}.
Therefore the existence of \sol R{}f corresponds to the existence of homoclinic orbit in
 the introduced dynamical system.
The presence of the Hardy potential  forces us to abandon the classical results established in~\cite{CoLe},
and to add a discussion of   exponential dichotomy  tools, based on~\cite{Co65,Co78,Jsell}.

\medbreak

Kelvin inversion $u(r) \mapsto u(1/r)r^{2-n}$ assumes a particularly clear form when it
 is combined with Fowler transformation (see Section~\ref{SecKelvin}). To the best
 of our knowledge this simple but useful remark   appeared for the first time in~\cite{Fjdde};
 here we explore this fact a bit further.

The paper is structured as follows: in Section~\ref{sec2} we introduce Fowler transformation (§\ref{secfow}), and we explain
some well known correspondences between the new system and the original problem, in the \eqref{eq.na} case (§\ref{sec2.2}), in the \eqref{hardy} case (§\ref{SecMH});
then we state our results in the general framework (§\ref{secST}).
In Section~\ref{prellemmas} we develop some geometrical consideration, which will be actually used in Section~\ref{secprooftm3} to prove our main theorems,
following some ideas introduced in~\cite{BDF,DF,JK}. In Section~\ref{MAP} we give some further examples of application of our results.
In Appendix~\ref{app1} we recall some well known facts concerning invariant manifold theory for non-autonomous system, and we explain
our extension to a setting where continuability is lost. Appendix~\ref{app2} and~\ref{app3} are devoted to adapt to our setting
some topological ideas already used respectively in~\cite{BDF,DF}, and in~\cite{Fduegen}.

\section{Preliminaries and stating of the results.}\label{sec2}
\subsection{Fowler transformation.} \label{secfow}

We consider equation~\eqref{hardy},
which corresponds to radial solutions of~\eqref{Hlaplace}.
Once we have fixed a constant $l>2$, and the values
$$
\alpha_l =\frac{2}{l-2}\,, \quad \gamma_l = \alpha_l+2-n \,,
$$
setting

\begin{equation}\label{transf1}
\begin{cases}
x_l(t)=u(r)r^{\alpha_l} \\  y_l(t) =u'(r)r^{\alpha_l+1}
\end{cases}
\qquad \text{where } r=\eu^t \,,
\end{equation}
 we pass from~(\ref{hardy}) to the following
\begin{equation} \label{si.hardy}\tag{S}
\left( \begin{array}{c}
\dot{x}_l \\
\dot{y}_l  \end{array}\right) = \left( \begin{array}{cc} \alpha_l &
1
\\ -h(\eu^t) & \gamma_l
\end{array} \right)
\left( \begin{array}{c} x_l \\ y_l  \end{array}\right) +\left(
\begin{array}{c} 0 \\-
g_l(x_l,t)\end{array}\right) ,
\end{equation}
where
\begin{equation}\label{whoisgl}
g_l(x,t)=f(x \eu^{-\alpha_l t}, \eu^t) \eu^{(\alpha_l+2)  t}\,.
\end{equation}
In particular, in the classical Laplace case, i.e. when $h(t) \equiv 0$, we find
\begin{equation} \label{si.nagen}\tag{S$_0$}
\left( \begin{array}{c}
\dot{x}_l \\
\dot{y}_l  \end{array}\right) = \left( \begin{array}{cc} \alpha_l &
1
\\ 0 & \gamma_l
\end{array} \right)
\left( \begin{array}{c} x_l \\ y_l  \end{array}\right) +\left(
\begin{array}{c} 0 \\-
g_l(x_l,t)\end{array}\right) .
\end{equation}

\noindent
The main advantage in this change of variables
is that, when $f$ is of type~(\ref{regular}), setting $l=q$ we obtain
a system which is not anymore singular. Moreover, if $h$ and $k$ are constants, then~(\ref{si.hardy})
is autonomous: in fact~\eqref{transf1} is a slight modification of the original transformation introduced by Fowler~\cite{Fow}.

More in general, whenever $h$ is a constant and $k(r)=K r^{\delta}$, where $\delta>-2$ we can set $l=l(q,\delta)=2\frac{q+\delta}{2+\delta}$
to get $g_l(x,t)=K x_l|x_l|^{q-2}$, so that~(\ref{si.hardy}) is an autonomous system.

Assume first that $h \equiv 0$, and $g_l(x,t)=K x_l|x_l|^{q-2}$.
In these cases, whenever $l>2_*$ and $K>0$, the origin is a saddle and admits a
 $1$-dimensional unstable manifold $M^u$ and a $1$-dimensional  stable manifold $M^s$.
 Moreover we have  two critical points $\boldsymbol{P^+} =(P_x,P_y)$ and $\boldsymbol{P^-} =(-P_x,-P_y)$ with $P_x>0$, which are stable for
 $l>2^*$,   centers for $l=2^*$ and unstable for $2_*<l<2^*$.
Using the translation for this context of the Pohozaev identity, see e.g.~\cite{Poho},
 we can easily
draw the phase portrait, in such an autonomous case (a detailed proof in the $p$-Laplace
context is given in~\cite{F2}, see also~\cite{Fjdde,FmSa}).

If $K<0$ and $l>2_*$ the origin is the unique critical point and both $M^u$ and $M^s$ are unbounded curves, see Figure \ref{livelli}.

\begin{figure}[t!]
\centering
\centerline{\epsfig{file=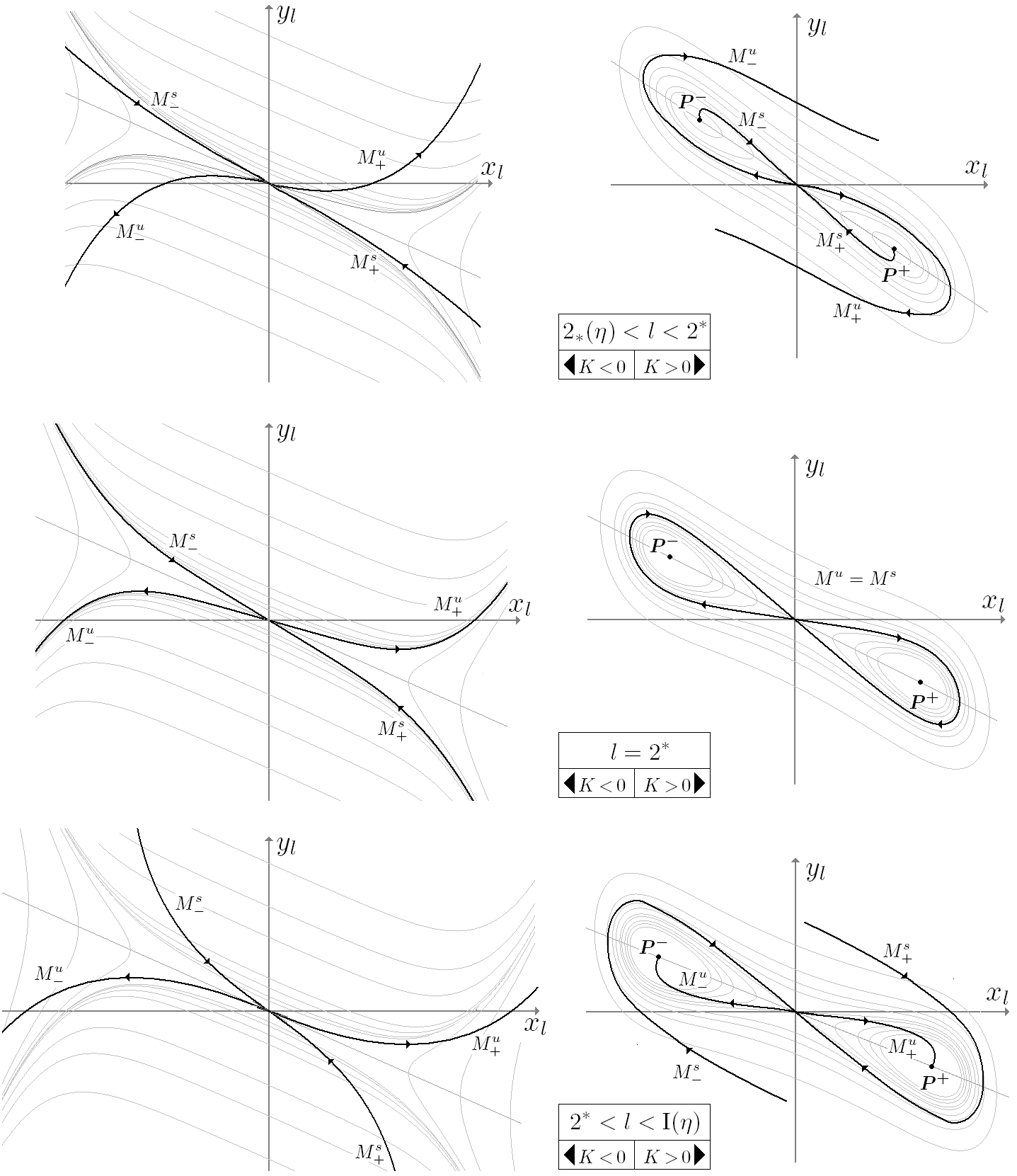, width = 12 cm}}
  \caption{
  Sketch of the phase portrait of~(\ref{si.hardy}), when $g_l(x,t)$ is $t$-independent and
  satisfies $\GP$. The unstable manifolds $M^u$ and the stable manifolds $M^s$ are drawn.  The manifolds can be located using  the level curves of some energy functions (cf.~\cite{F2,FmSa,Poho}): in particular in the case $l=2^*$ the system is Hamiltonian, moreover, if $K>0$, it presents periodic solutions and $M^u$ and $M^s$ coincide giving two homoclinic orbits.
}\label{livelli}
\end{figure}

In fact this analysis is easily extended to any autonomous system~(\ref{si.nagen}) satisfying the following assumption,
see~\cite{DF} for details:
\begin{description}
  \item[$\GP$] There is $l>2_*$
   such that $g_l(x,t) \equiv K g_l(x)$ is $t$-independent, $K \ne 0$ is a constant, and $g_l(x)/x$ is a function, which is positive increasing for $x>0$ and positive decreasing for $x<0$, satisfying
  $$  \lim_{x \to 0} \frac{g_l(x)}{x} = 0
  \qquad \text{and}\qquad
  \lim_{|x| \to +\infty} \frac{g_l(x)}{x}=+\infty\,. $$
\end{description}
\noindent
This way we can consider e.g. $g_l(x)=
k_1  x|x|^{q_1-2}+k_2 x|x|^{q_2-2}$
(i.e. $f(u,r)= k_1 u|u|^{q_1-2}+k_2 r^{\delta}u|u|^{q_2-2}$
where $\delta=\frac{2(q_2-q_1)}{q_1-2}$),
 or $g_l(x)= k_1  x|x|^{q-2}\ln(|x|)$ (i.e.
 $f(u,r)= k_1 u|u|^{q-2} \ln(u r^{\frac{2}{q-2}})$)
where $k_1$ and $k_2$ are positive constants
and $q_1$ and $q_2$ are larger than $2$. So we can consider slightly
more general functions $f$.

\medbreak

Now we introduce  a further notation which will be in force in the whole paper: we denote by $\boldsymbol{x_l}(t,\tau,\Q)=(x_l(t,\tau,\Q),y_l(t,\tau,\Q))$
 the trajectory  of~\eqref{si.hardy}  -- or~\eqref{si.nagen} --
which is in $\Q$ for $t=\tau$.

The following remark underlines the relations between the behaviour of solutions of~(\ref{si.nagen}) and of~(\ref{eq.na}).

 \begin{remark}\label{corrisp}
 Assume $\GP$.
Consider the trajectory $\boldsymbol{x_l}(t,\tau,\Q)$ of~(\ref{si.nagen}) and let $u(r)$ be the corresponding
solution of~(\ref{eq.na});
then $u(r)$ is a regular solution if and only if $\Q \in M^u$, while it has fast decay if and only if
$\Q \in M^s$.
 \end{remark}
This result can be easily proved using standard tools in invariant manifold theory, see
e.g.~\cite{F2,Fjdde,FmSa};
we will prove it as a special case of a more general result, Lemma~\ref{corrispondenze}
below, in the non-autonomous context.

Assume now  $h(t) \equiv \eta<\frac{(n-2)^2}{4}$: the linearization of  system~\eqref{si.hardy} in the origin
has real and distinct eigenvalues.
Moreover  the origin is a saddle iff $|\alpha_l \gamma_l|>\eta$ or equivalently
\begin{equation}\label{lh}
2_*(\eta)<l<{\rm I}(\eta)\,,
\end{equation}
where
$2_*(\eta)$ and
${\rm I}(\eta)$ have been defined in~\eqref{def2eta},~\eqref{defIeta}.
The eigenvalues are $\lambda_1 =\gamma_l + \kappa(\eta) <0<\lambda_2=\alpha_l -\kappa(\eta)$, where $\kappa(\eta)$ was defined in~\eqref{defKappa}.
Let us assume first that $g_l(x,t) = K g_l(x)$ satisfies $\GP$ with $K>0$, where the condition $l>2_*$ is replaced by $l>2_*(\eta)$.
It is straightforward to check that, when the parameters are in the range
\eqref{lh},
 we have again a unique critical point $\bs{P^{+}}=(P_x, P_y)$ in $x>0$; in particular  if $g_l(x)=x|x|^{q-2}$,
we find $P_x=[\alpha_l (n-2-\alpha_l)-\eta]^{1/(q-2)}$, and $P_y=-\alpha_l P_x$.
The point $\bs{P^{+}}$ is unstable (either a node or a focus) if $2_*(\eta)<l<2^*$, a center if $l=2^*$ it is stable if $2^*<l<{\rm I}(\eta)$ (either a node or a focus). Again for $K<0$ we find that $M^u$ and $M^s$ are unbounded curves. See Figure \ref{livelli}.
 We refer to~\cite{FmSa} for details.

We can again consider the stable manifold $M^s$ and the unstable manifold $M^u$, in order to obtain estimates as in Remark~\ref{corrisp}, however the presence of the Hardy potential may forbid the existence of regular solutions. We will present such details in Section~\ref{SecMH} in the non-autonomous case so to avoid repetitions.

\subsection{Stable and unstable manifolds for non-autonomous systems.}\label{sec2.2}

In the previous subsection we have begun from the autonomous case for illustrative purposes.
Now we turn to consider the $t$-dependent case: the first step is the generalization of
Remark~\ref{corrisp}.
In this subsection we will make use of the following assumption
for illustrative purposes (it will be removed from the next subsection):
\begin{description}
\item[\textbf{C}] All the trajectories of~\eqref{si.hardy} are
continuable for any $t \in \RR$.
\end{description}

We have two different alternatives to introduce stable and unstable sets for non-autonomous systems,
 thus extending Remark~\ref{corrisp} to a generic
$g_l(x,t)$. The simplest one requires the strongest hypotheses, but gives more structure.

\begin{description}
 \item[$\Gupiu$] Assume  \textbf{H} and that there is $l_u \in (2_*(\eta), {\rm I}(\eta))$ such that
$g_{l_u}(0,t)=0$, $\partial_x g_{l_u}(0,t) = 0$ for any $t\in\RR$,
 and
 $$
 \lim_{t\to -\infty} g_{l_u}(x,t) = K g^{-\infty}_{l_u}(x)
 \quad\text{and}\quad
 \lito \eu^{-\varpi t} \,{\partial_t} \,g_{l_u}(x,t) =0
 \,,
 $$
 uniformly on compact sets, where the function $g^{-\infty}_{l_u}$ is a non-trivial locally Lipschitz function satisfying $\GP$ and $\varpi$ is a suitable positive constant.
\item[$\Gspiu$] Assume  \textbf{H} and that there is $l_s \in (2_*(\beta), {\rm I}(\beta))$ such that
$g_{l_s}(0,t)=0$, $\partial_x g_{l_s}(0,t) = 0$  for any $t \in \RR$, and
  $$
  \lim_{t\to +\infty} g_{l_s}(x,t) = K g^{+\infty}_{l_s}(x)
  \quad\text{and}\quad
  \lim_{t\to +\infty} \eu^{\varpi t} \, {\partial_t} \, g_{l_s}(x,t)=0
  \,,
  $$
  uniformly on compact sets, where the function $g^{+\infty}_{l_s}$ is a non-trivial locally Lipschitz function satisfying $\GP$ and $\varpi$ is a suitable positive constant.
  \end{description}

\medbreak

\noindent  Assume $\Gupiu$ and  add to~(\ref{si.hardy}) the variable $z= \eu^{\varpi t}$, to get

\begin{equation}\label{si.naa}
\left( \begin{array}{c}
\dot{x}_{l_u} \\
\dot{y}_{l_u} \\
\dot{z} \end{array}\right) = \left( \begin{array}{ccc} \alpha_{l_u} &
1 &0
\\ -h( z^{1/\varpi}) & \gamma_{l_u} & 0 \\
0 & 0 & \varpi
\end{array} \right)
\left( \begin{array}{c} x_{l_u} \\ y_{l_u} \\ z \end{array}\right) -\left(
\begin{array}{c} 0 \\
g_{l_u}(x_{l_u},\ln(z)/\varpi)\\ 0\end{array}\right) .
\end{equation}
We have thus obtained an autonomous system and all its
 trajectories converge to the $z=0$ plane as $t \to -\infty$; so~(\ref{si.naa}) is useful to investigate
the asymptotic behavior in the past.
 The origin admits a
$2$-dimensional  unstable manifold denoted by $\boldsymbol{W^u}$.
  From standard arguments of dynamical system
 theory, we see that  the set
 $W^u_{l_u}(\tau)=\boldsymbol{W^u} \cap \{ z= \eu^{\varpi \tau} \}$ is a $1$-dimensional (immersed) manifold, for any $\tau \in \RR$,
 see e.g.~\cite{BDF,Fdie}.

  Similarly when $\Gspiu$ holds we   consider the following  system to study the behavior of
  trajectories in the future, adding the new variable $\zeta = \eu^{-\varpi t}$.
 \begin{equation}\label{si.naas}
\left( \begin{array}{c}
\dot{x}_{l_s} \\
\dot{y}_{l_s} \\
\dot{\zeta} \end{array}\right) = \left( \begin{array}{ccc} \alpha_{l_s} &
1 &0
\\ -h( \zeta^{-1/\varpi}) & \gamma_{l_s} & 0 \\
0 & 0 & -\varpi
\end{array} \right)
\left( \begin{array}{c} x_{l_s} \\ y_{l_s} \\ \zeta
\end{array}\right) -\left(
\begin{array}{c} 0 \\
g_{l_s}(x_{l_s},-{\ln(\zeta)}/{\varpi})\\ 0\end{array}\right) .
\end{equation}
All its trajectories converge to the $\zeta=0$ plane as $t \to +\infty$.  The origin admits a
 $2$-dimensional stable manifold denoted by $\boldsymbol{W^s}$. Arguing as above, for any $\tau \in \RR$,
 $W^s_{l_s}(\tau)=\boldsymbol{W^s} \cap \{ \zeta= \eu^{-\varpi \tau} \}$ is a $1$-dimensional manifold.

 \medbreak

 Let us denote by $W^u_{l_u}(-\infty)$ the unstable manifold $M^u$ of the autonomous system~(\ref{si.hardy}) where
  $l=l_u$ and  $g_{l_u}(x,t) \equiv K g_{l_u}^{-\infty}(x)$,  and by  $W^s_{l_s}(+\infty)$ the
  stable manifold $M^s$ of the autonomous system~(\ref{si.hardy}) where
  $l=l_s$ and  $g_{l_s}(x,t) \equiv K g_{l_s}^{+\infty}(x)$.  Then we have the following.

    \begin{remark}\label{allinfinito}
  Assume \textbf{C} and $\Gupiu$; then $W^u_{l_u}(\tau)$ approaches
  $W^u_{l_u}(-\infty)$ as $\tau \to -\infty$. Assume $\Gspiu$; then $W^s_{l_s}(\tau)$ approaches
  $W^s_{l_s}(+\infty)$ as $\tau \to +\infty$.
  More precisely, if
 $W^u_{l_u}(\tau_0)$ (respectively
  $W^s_{l_s}(\tau_0)$) intersects  transversally a certain line $L$ in a point $\Q(\tau_0)$ for $\tau_0 \in[-\infty,+\infty)$
  (resp. for $\tau_0 \in(-\infty,+\infty]$), then there is a neighborhood
  $I$  of $\tau_0$ such that $W^u_{l_u}(\tau)$ (resp.
  $W^s_{l_s}(\tau)$) intersects $L$ in a point $\Q(\tau)$ for any $\tau \in I$, and $\Q(\tau)$ is continuous
  (in particular it is as smooth as $g_l$).
  \end{remark}

  The proof of this Remark follows from standard facts in dynamical system theory. If we assume $h(t)\equiv \eta$, it follows from
    ~\cite[§ 13]{CoLe}, while if we allow $h$ to be a function satisfying \textbf{H} it follows from
    ~\cite[Theorem 4.1]{Co78}  or~\cite[Theorem~2.16]{Jsell}.

  Following~\cite{JPY2}, which is based on~\cite{Jsell}, we can introduce stable and unstable leaves with assumptions
weaker than $\Gupiu$ and $\Gspiu$, see also the Appendix~\ref{app1} for a more detailed discussion of the topic.
\begin{description}
\item[$\Gumeno$] Assume \textbf{H} and that there exists $l_u \in (2_*(\eta), {\rm I}(\eta))$ such that
$g_{l_u}(x,t)$ is  continuous in $x$ uniformly for $t \le \tau$,
whenever  $\tau \in\RR$ and for any  $x$ in a compact set; further
$g_{l_u}(0,t)=\partial_x g_{l_u}(0,t) = 0$ for any $t \in \RR$.
\item[$\Gsmeno$] Assume \textbf{H} and that there exists $l_s \in (2_*(\beta), {\rm I}(\beta))$ such that
$g_{l_s}(x,t)$ is  continuous in $x$ uniformly for $t \ge \tau$,
whenever $\tau \in\RR$ and for any $x$ in a compact set; further
$g_{l_s}(0,t)=\partial_x g_{l_s}(0,t) = 0
$ for any $t \in \RR$.
\end{description}
Replacing $\Gupiu$ by $\Gumeno$ and $\Gspiu$ by $\Gsmeno$
  we can again construct $1$-dimensional (immersed) manifolds $W^u_{l_u}(\tau)$,
  respectively $W^s_{l_s}(\tau)$, for any $\tau \in\RR$ by characterizing them as follows:
    \begin{equation}\label{WueWs}
        \begin{array}{c}
    W^u_{l_u}(\tau):= \{ \Q \in \RR^2 \mid \lito \boldsymbol{x_{l_u}}(t,\tau, \Q)=(0,0) \} \,,\\
    W^s_{l_s}(\tau):= \{ \Q \in \RR^2 \mid \lit  \boldsymbol{x_{l_s}}(t,\tau, \Q)=(0,0) \} \,.
    \end{array}
  \end{equation}
  Furthermore $ W^u_{l_u}(\tau)$ and $ W^s_{l_s}(\tau)$  in the origin are tangent respectively
  to the unstable and the stable space of the linearized system, see~\cite{Jsell} and the Appendix for more details, in particular Remarks~\ref{global} and~\ref{8shaped}.

  $ W^u_{l_u}(\tau)$ and $ W^s_{l_s}(\tau)$ have the smoothness property described above in Remark~\ref{allinfinito}, but the first part of Remark~\ref{allinfinito} concerning their asymptotical behavior does not
   hold anymore (since $W^u_{l_u}(-\infty)$ and $W^s_{l_s}(+\infty)$ may be not defined).
   We stress that $\Gupiu$ implies $\Gumeno$, and   if the former holds then the manifolds $W^u_{l_u}(\tau)$
   constructed via $\Gupiu$ and $\Gumeno$ coincide;
    the specular result holds  for $\Gspiu$ which implies $\Gsmeno$
   (this way we see that Remark~\ref{tangente} below holds for $ W^u_{l_u}(\tau)$ and $ W^s_{l_s}(\tau)$ if we assume $\Gupiu$ and $\Gspiu$).
   However observe that with $\Gumeno$ and $\Gsmeno$ we allow also functions $g_l(x,t)$ which are periodic in $t$ or which have
   logarithmic behavior.
 Further notice that when $\Gupiu$ holds,
   the phase portrait is very different in the two cases $K>0$ and $K<0$ (see Figure~\ref{livelli}), but in any case $\Gumeno$ holds and guarantees the existence
   of the unstable manifold. A similar argument holds for $\Gspiu$ and $\Gsmeno$, too.

   Since we want to
  understand the mutual position of these two objects we introduce the manifolds:
  \begin{equation}\label{cambioL}
    \begin{array}{l}
      W^u_{l_s}(\tau):= \{ \R=\Q \textrm{exp}[-(\alpha_{l_u}-\alpha_{l_s})\tau] \in \RR^2 \mid \Q \in W^u_{l_u}(\tau) \} \,,\\
      W^s_{l_u}(\tau):= \{ \Q=\R \textrm{exp}[(\alpha_{l_u}-\alpha_{l_s})\tau] \in \RR^2 \mid \R \in W^s_{l_s}(\tau) \} \,.
    \end{array}
  \end{equation}
  Notice that $W^u_{l_s}(\tau)$ is omothetic to $W^u_{l_u}(\tau)$, and $ W^s_{l_u}(\tau)$ is
  omothetic to $W^s_{l_s}(\tau)$. Hence, they are $1$-dimensional (immersed) manifolds for any $\tau \in \RR$.

\begin{remark}\label{cambialu}
 Observe that when $\Humeno$ holds for some $\bar{l}_u>2_*(\eta)$, respectively $\Hsmeno$ holds for some
  $\bar{l}_s>2_*(\beta)$, then
$\Humeno$ holds for any $l_u \in[ \bar{l}_u, {\rm I}(\eta))$, resp. $\Hsmeno$ holds for any
$l_s \in  (2_{*}(\beta),\bar{l}_s]$.
\end{remark}
The validity of the previous remark can be immediately verified: if we choose $l\neq L$ we have
$g_L(x,t)/x = g_l(\xi,t)/\xi$, where $ \xi= x \eu^{(\alpha_l-\alpha_L)t} $.

\medbreak

  \textbf{From now to the end of the subsection we assume $\bs{h(t) \equiv 0}$} for illustrative purposes; such a restriction is removed in the next
  subsection, which is focused on the novelties introduced by the Hardy term.
  We emphasize that, in any case, the result of this paper are new even for the original Laplace case, i.e. when $h(t) \equiv 0$.
  We also stress that the upper bound in the values of $l_u$ and $l_s$ due to $\Gumeno$, $\Gsmeno$ disappears when $h(t) \equiv 0$
  (since ${\rm I}(0)=+\infty$).

  \begin{remark}\label{tangente}
  Assume $h(t) \equiv 0$, \textbf{C}, $\Gumeno$, $\Gsmeno$, then $ W^u_{l_u}(\tau)$ and $ W^s_{l_s}(\tau)$ are tangent respectively to
  $y=0$ and to $y=-(n-2)x$ at the origin for any $\tau \in\RR$.
  \end{remark}

   As in the $t$-independent case,  all regular solutions correspond to trajectories in $W^u_{l_s}(\tau)$,
  while fast decay solutions correspond  to trajectories in $W^s_{l_u}(\tau)$.
 More precisely   we have the following, see~\cite{Fjdde,Fdie}.

  \begin{lemma}\label{corrispondenze}
Assume $h \equiv 0$, \textbf{C}, $\Gumeno$ and  $\Gsmeno$. Consider the trajectory $\boldsymbol{x_{l_u}}(t,\tau,\Q)$ of~(\ref{si.nagen}) with $l=l_u>2_*$, and the corresponding trajectory
$\boldsymbol{x_{l_s}}(t,\tau,\R)$ of~(\ref{si.nagen}) with $l=l_s>2_*$. Let $u(r)$ be the corresponding
solution of~(\ref{eq.na}). Then $\R=\Q \textrm{exp}[ -(\alpha_{l_u}-\alpha_{l_s}) \tau]$,
\begin{equation*}
\begin{aligned}
	u(r) \text{ is a regular solution}
	&\,\iff\, &
  \Q \in W^u_{l_u}(\tau)
 & \,\iff\, &
  \R \in W^u_{l_s}(\tau)\,,
\\
	u(r) \text{ is a fast decay  solution}
	& \,\iff\, &
  \Q \in W^s_{l_u}(\tau)
  & \,\iff\, &
  \R \in W^s_{l_s}(\tau) \,.
\end{aligned}
\end{equation*}
\end{lemma}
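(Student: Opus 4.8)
The plan is to split the statement into three logically separate pieces: the coordinate identity $\R=\Q\,\textrm{exp}[-(\alpha_{l_u}-\alpha_{l_s})\tau]$, the two ``inner'' equivalences relating the $l_u$-- and $l_s$--pictures, and the two ``outer'' equivalences tying the asymptotics of $u(r)$ to the convergence of the Fowler trajectory to the origin. Only the last piece carries genuine analytic content; the first two are essentially bookkeeping, made available by the continuability assumption \textbf{C}, which guarantees that all trajectories are defined for every $t\in\RR$ so that the limits in~\eqref{WueWs} are meaningful.

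First I would record how one and the same solution $u$ is seen through two parameters. Writing \eqref{transf1} for $l_u$ and for $l_s$ at the same $r=\eu^t$ gives $x_{l_s}(t)=u(r)r^{\alpha_{l_s}}=x_{l_u}(t)\,r^{\alpha_{l_s}-\alpha_{l_u}}$ and likewise $y_{l_s}(t)=y_{l_u}(t)\,r^{\alpha_{l_s}-\alpha_{l_u}}$. Evaluating at $t=\tau$ (so $r=\eu^\tau$) yields $\R=\Q\,\textrm{exp}[(\alpha_{l_s}-\alpha_{l_u})\tau]=\Q\,\textrm{exp}[-(\alpha_{l_u}-\alpha_{l_s})\tau]$, the asserted relation. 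Feeding this into the definitions~\eqref{cambioL} of $W^u_{l_s}(\tau)$ and $W^s_{l_u}(\tau)$ makes the inner equivalences $\Q\in W^u_{l_u}(\tau)\iff\R\in W^u_{l_s}(\tau)$ and $\Q\in W^s_{l_u}(\tau)\iff\R\in W^s_{l_s}(\tau)$ immediate, because the homothety appearing in~\eqref{cambioL} is exactly the map $\Q\mapsto\R$ just derived.

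It then remains to show $u$ regular $\iff\Q\in W^u_{l_u}(\tau)$ and $u$ fast decay $\iff\R\in W^s_{l_s}(\tau)$, via the characterization~\eqref{WueWs}. The two ``only if'' implications are obtained by inserting the prescribed asymptotics into~\eqref{transf1}. If $u$ is regular, then $u(r)\to d$ and, integrating~\eqref{eq.na}, $u'(r)=O(r)$ as $r\to0$, so $x_{l_u}(t)=u(r)r^{\alpha_{l_u}}\to0$ and $y_{l_u}(t)=u'(r)r^{\alpha_{l_u}+1}=O(r^{\alpha_{l_u}+2})\to0$ as $t\to-\infty$ (recall $\alpha_{l_u}>0$), whence $\Q\in W^u_{l_u}(\tau)$. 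If $u$ has fast decay, then $u(r)\sim Lr^{2-n}$ and $u'(r)\sim(2-n)Lr^{1-n}$ as $r\to\infty$, giving $x_{l_s}(t)\sim L\,\eu^{\gamma_{l_s}t}$ and $y_{l_s}(t)\sim(2-n)L\,\eu^{\gamma_{l_s}t}$, both vanishing as $t\to+\infty$ since $\gamma_{l_s}<0$; hence $\R\in W^s_{l_s}(\tau)$.

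The substantive part is the two converses, and I expect this to be the main obstacle. Suppose $\Q\in W^u_{l_u}(\tau)$, so $\boldsymbol{x_{l_u}}(t,\tau,\Q)\to(0,0)$ as $t\to-\infty$. For $h\equiv0$ the origin is a hyperbolic saddle with eigenvalues $\gamma_{l_u}<0<\alpha_{l_u}$ and a one-dimensional unstable manifold tangent to $y=0$ (Remark~\ref{tangente}); since the nonlinearity is $o(|x|)$ near the origin, any trajectory converging to it in the past must approach at the unstable rate, so $x_{l_u}(t)=c\,\eu^{\alpha_{l_u}t}(1+o(1))$ with $y_{l_u}(t)=o(x_{l_u}(t))$. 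Inverting~\eqref{transf1} gives $u(r)=x_{l_u}(t)r^{-\alpha_{l_u}}\to c=:d$ as $r\to0$, and a standard bootstrap on the singular ODE~\eqref{eq.na} upgrades this to a genuine regular solution with $u'(0)=0$. The delicate points are extracting the sharp exponential rate from mere convergence to the saddle and controlling $y_{l_u}$ finely enough to secure classical regularity at $r=0$; these are precisely the estimates of~\cite{Fjdde,Fdie,F2}, which I would cite. The fast-decay converse is analogous but easier, there being no singularity at $r=\infty$: from $\R\in W^s_{l_s}(\tau)$ the trajectory decays along the stable direction $y=(2-n)x$ at rate $\eu^{\gamma_{l_s}t}$, so $u(r)=x_{l_s}(t)r^{-\alpha_{l_s}}\sim L\,r^{\gamma_{l_s}-\alpha_{l_s}}=L\,r^{2-n}$. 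Chaining the outer equivalences with the inner ones and the coordinate identity yields the full statement.
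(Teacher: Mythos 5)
Your proposal is correct and in substance coincides with the paper's own argument: Lemma~\ref{corrispondenze} is proved there as the special case ($h\equiv 0$ plus \textbf{C}) of Lemma~\ref{corrispondenze.bis} in Appendix~\ref{app1}, where the coordinate identity and the inner equivalences are the same bookkeeping you perform via~\eqref{transf1} and~\eqref{cambioL}, and the hard implication ($\Q\in W^u_{l_u}(\tau)$ forces $u$ regular, $\R\in W^s_{l_s}(\tau)$ forces fast decay) is likewise obtained from the exponential rate of approach to the hyperbolic saddle: the paper quotes roughness of exponential dichotomy (\cite{Co78}, \cite{Jsell}) to get $\boldsymbol{x_{l_u}}(t,\tau,\Q)\,\eu^{-\alpha_{l_u}t}\to d\,(1,0)$, which is exactly the estimate you defer to~\cite{F2,Fjdde,Fdie}. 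The one organizational difference lies in the easy implication: you prove ``$u$ regular $\Rightarrow\Q\in W^u_{l_u}(\tau)$'' directly, by inserting the asymptotics into~\eqref{transf1} and invoking the characterization~\eqref{WueWs}, whereas the paper proves the contrapositive ``$\Q\notin W^u_{l_u}(\tau)\Rightarrow u$ not regular'', which costs an extra phase-portrait fact (a trajectory not converging to the origin stays uniformly away from it as $t\to-\infty$ when $l_u\neq 2^*$, and at least along a sequence $t_n\to-\infty$ when $l_u=2^*$). Your direct version bypasses that dynamical input and is the cleaner choice under \textbf{C}; the paper's formulation is the one that survives in the general Hardy setting of Lemma~\ref{corrispondenze.bis}, where regular solutions are replaced by \Rsol-solutions defined through a limit. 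Two glosses in your write-up are harmless but worth stating precisely: under $\Gumeno$ alone one only gets $u'(r)\to0$ (which suffices, and is anyway part of the definition of a regular solution) rather than $u'(r)=O(r)$, the latter being specific to the prototype~\eqref{regular} with $k$ continuous; and in the fast-decay direction $u'(r)\sim(2-n)Lr^{1-n}$ is not part of the definition of fast decay but a standard consequence of integrating~\eqref{eq.na} --- boundedness of $u'(r)r^{n-1}$ is already enough to conclude $y_{l_s}(t)\to0$.
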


\noindent
We postpone the proof of the lemma to Appendix~\ref{app1}, see page \pageref{proofcorr}.

\medbreak

 Note that the manifold $W^u_{l_u}(\tau)$ is split by the origin into two connected components, one which
 leaves the origin and enters the $x>0$ semi-plane (corresponding to regular solutions $u(r)$ positive for $r$ small), denoted by
 $W^{u,+}_{l_u}(\tau)$, and the other which enters the $x<0$ semi-plane (corresponding to regular solutions $u(r)$
 negative for $r$ small), denoted by
 $W^{u,-}_{l_u}(\tau)$. Similarly $W^{s}_{l_s}(\tau)$ is split by the origin into $W^{s,+}_{l_s}(\tau)$ and $W^{s,-}_{l_s}(\tau)$,
 which leave the origin and enter respectively in $x>0$ and in $x<0$ (and correspond to fast decay solutions $u(r)$
 which are definitively positive and definitively negative respectively).

\medbreak

Now we turn to consider briefly singular and slow decay solutions, see e.g.~\cite{DF}.

\begin{remark}\label{sing}
Assume $h(t) \equiv 0$ and  $\Gupiu$ with $K>0$, then~(\ref{si.naa}) admits a critical point
$(P_x,P_y,0)$ such that $P_x>0$. This point admits an unstable manifold which is $1$-dimensional if $l_u \ge 2^*$
and $3$-dimensional if $2_*<l_u <2^*$. The trajectories $(\boldsymbol{x_{l_u}}(t),z(t))$ contained in this manifold
 correspond to singular solutions $v(r)$ of~(\ref{eq.na})
such that $\liro v(r)r^{\alpha_{l_u}}=P_x>0$. It is easy to check that  if $l_u>2^*$ we have a unique singular solution,
while if $2_*<l_u<2^*$ we have uncountably many singular solutions.
Further, if $l_u \ne 2^*$, any trajectory $\bs{x_{l_u}}(t)$ of \eqref{si.hardy} bounded for $t \le 0$ converges
  either to $\bs P= (P_x,P_y)$ or to $-\bs P$ or to the origin as $t \to -\infty$.

Assume $h(t) \equiv 0$ and   $\Gspiu$ with $K>0$, then~(\ref{si.naas}) admits a critical point
$(P_x,P_y,0)$ such that $P_x>0$. This point admits a
stable manifold which is $1$-dimensional  if $2_*< l_s \le 2^*$
and $3$-dimensional if $l_s>2^*$.
 The trajectories $(\boldsymbol{x_{l_s}}(t),\zeta(t))$ contained in this manifold
 correspond to slow decay solutions $v(r)$ of~(\ref{eq.na})
such that $\lir v(r)r^{\alpha_{l_s}}=P_x>0$.
If $2_*<l_s<2^*$ we have a unique slow decay solution,
while if $l_s>2^*$ we have uncountably many slow decay solutions.
Further, if $l_s \ne 2^*$, any trajectory $\bs{x_{l_s}}(t)$ of \eqref{si.hardy} bounded for $t \ge 0$ converges
either to $\bs P= (P_x,P_y)$ or to $-\bs P$ or to the origin as $t \to +\infty$.
\end{remark}
The proof follows from elementary arguments on the phase portrait, see e.g. \cite[Lemma 2.9]{DF} for details.

\subsection{Stable and unstable manifolds with
Hardy potentials.}\label{SecMH}
We go back to consider~\eqref{hardy}, and~\eqref{si.hardy} where $h(t) \not\equiv 0$ satisfies \textbf{H}.
We list some results which explain similarities and   differences with respect to Section 2.2.
Their proofs rely on standard facts in invariant manifold theory for non-autonomous systems, and in particular on exponential dichotomy: they
are postponed to the Appendix.
\begin{remark}\label{differenze1}
 Assume   $\Humeno$; if $\eta \ne 0$ regular solutions for~\eqref{hardy} do not exist, due to the singularity of the equation for $r=0$.
They are replaced by solutions which (may) exhibit a singular behavior as $r \to 0$. More precisely,
for any $d \in \RR$ there is a unique solution, $u(r)=u(r,d)$ of~\eqref{hardy} such that
$u(r)r^{\kappa(\eta)} \to d$ as $r \to 0$. \\
Analogously assume $\Hsmeno$; then the behavior of fast decay solutions changes slightly. I.e., for any $L \in \RR$ there is a unique
solution $v(r,L)$ such that $v(r,L)r^{n-2-\kappa(\beta)} \to L$ as $r \to +\infty$ (cf. Definition~\ref{defRFS}).

In particular, Lemma~\ref{corrispondenze} continues to hold respectively for \Rsol-solutions and \fdsol-solutions.
\end{remark}
\begin{remark}\label{differenze1.5}
 Assume   $\Gupiu$ and $\Gspiu$ with $K>0$ and \textbf{H} (allow $h(t)\not\equiv 0$). Then Remark~\ref{sing}, continues to hold almost with no differences:
 In particular  \Ssol-solutions are  asymptotic to $\pm P_x r^{\alpha_{l_u}}$ as $r \to 0$, while \sdsol-solutions  are asymptotic to
 $\pm P_x r^{\alpha_{l_s}}$ as $r \to +\infty$.
  The only change is in the value
 of $P_x$ (which however can be computed explicitly).

 We also observe that if $\Gupiu$, $\Gspiu$, \textbf{H} hold but $K<0$, then there are no \Ssol-solutions neither \sdsol-solutions, so solutions of \eqref{hardy} which are defined in a neighbourhood of $r=0$ and are definitively positive  for $r$ small are \Rsol-solutions and the ones which are defined in a neighbourhood of $r=\infty$ and are definitively positive  for $r$ large are \fdsol-solutions.
\end{remark}

Denote by $\mathcal{A}_l(t)=\left( \begin{array}{cc} \alpha_{l} &
1
\\ -h(\eu^t) & \gamma_{l}
\end{array} \right)$.
We recall that if $\mathcal A_l(t) \equiv \mathcal{A}_l$ is a constant matrix (e.g. when
 $h \equiv 0$ as for~\eqref{eq.na}), then the tangent spaces to $W^u_{l_u}(\tau)$ and $W^s_{l_s}(\tau)$,
 say $\ell^u(\tau)$ and $\ell^s(\tau)$,
 are independent from $\tau$. This is not the case if $\mathcal{A}_l(t) \not\equiv \mathcal{A}$.
 Let  $m^u(\tau)$  and $m^s(\tau)$  be such that
 \begin{equation}\label{ellus}
 \begin{array}{cc}
    \ell^u(\tau):=\{(1, m^u(\tau))  \mid x \in \RR \} \, , &   \ell^u(-\infty):=\{(1, m^u(-\infty)) x  \mid x \in \RR \} \, , \\
      \ell^s(\tau):=\{(1, m^s(\tau)) x  \mid x \in \RR \} \,,
       &  \ell^s(+\infty):=\{(1, m^s(+\infty))x   \mid x \in \RR \} \,.
    \end{array}
 \end{equation}
\begin{remark}\label{differenze2}
Assume $\Humeno$, $\Hsmeno$  and allow $h(t)\not\equiv 0$; then  $\ell^u(\tau)$ and $\ell^s(\tau)$, change smoothly with $\tau$.
Moreover $m^u(\tau)\to m^u(-\infty):=-(\kappa(\eta))$  as $\tau \to -\infty$ and
$m^s(\tau)\to m^s(+\infty):=-(n-2-\kappa(\beta))$  as $\tau \to +\infty$.
Furthermore
$$
\kappa(\eta) < \frac{n-2}{2} < n-2-\kappa(\beta) \,.
$$
\end{remark}
\begin{remark}\label{differenze3}
If $0<\eta<\frac{(n-2)^2}{4}$, then  $\kappa(\eta)>0$, hence the \Rsol-solution $u(r,d)$, with $d>0$,
 is in fact singular, i.e. $\liro u(r)=+\infty$, and accordingly $u'(r)$ is negative and $\liro u'(r)=-\infty$
 as $r \to 0$. However if $\eta<0$ then $\kappa(\eta)<0$, i.e. the \Rsol-solution $u(r,d)$, with $d>0$, is such that $u(r,d) \to 0$ like a power
 as $r \to 0$. Moreover it is monotone increasing for $d>0$, since $\ell^u(\tau)$
 lies in  $xy>0$ for $\tau\ll0$, and consequently the first branch of $W^u_{l_u}(\tau)$
 lies in $xy>0$ for $\tau\ll0$.
\end{remark}

\subsection{The lack of continuability}\label{sec2.4}

If \textbf{C} is removed the situation becomes more complicated.
\begin{remark}\label{continuabilita}
In this paper we are interested in functions $f(u,r)$ which are negative for $u$ large and either $r$ small or $r$ large.
  In these cases equation~\eqref{laplace} may admit solutions which are not globally defined,
  i.e. \textbf{C} is not fulfilled.
   So we adopt the following notation: we   say that a solution $u(r,d)$, resp. a solution $v(r,L)$, is defined in a certain maximal interval $[0,\varrho_d)$ with $\varrho_d \in (0,+\infty]$, resp. in $(\bar{\varrho}_L,+\infty)$ with $\bar{\varrho}_L\in[0,+\infty)$.
   We emphasize that $\varrho_d \to +\infty$ as $d \to 0$, and $\bar{\varrho}_L \to 0$ as $L \to 0$,
   since the null solution is continuable for any $r \ge 0$.
\end{remark}
We introduce the following definitions
\begin{equation}\label{eqrhod}
    \begin{array}{ccc}
      d^+_\tau & =&  \sup \{D \mid  \rho_d < \eu^{\tau} \; \textrm{ for any $0<d< D$} \}\,, \\
      d^-_\tau & =&  \inf \{D \mid  \rho_d < \eu^{\tau} \; \textrm{ for any $D<d<0$} \}\,, \\
     L^+_\tau & =&  \sup \{\bar{L} \mid  \bar{\rho}_{L} > \eu^{\tau} \; \textrm{ for any $0<L< \bar{L}$} \}\,,\\
       L^-_\tau & =&  \inf \{\bar{L} \mid  \bar{\rho}_{L} > \eu^{\tau} \; \textrm{ for any $\bar{L}<L<0$} \}\,,
    \end{array}
\end{equation}
Obviously the intervals $(d^-_\tau,d^+_\tau)$ and  $(L^-_\tau,L^+_\tau)$ coincide with the whole $\RR$ if \textbf{C}
is assumed, but they are bounded in the cases considered in this paper.

The lack of continuability is a relevant problem in order to apply
dynamical system techniques and invariant manifold theory for non-autonomous systems.
Let us denote by $\tilde{W}^u_{l_u}(\tau)$
and $\tilde{W}^s_{l_s}(\tau)$ the sets characterized as in~\eqref{WueWs}.
In the Appendix we will show that $\tilde{W}^u_{l_u}(\tau)$
and $\tilde{W}^s_{l_s}(\tau)$   may be disconnected. As usual, we can split these sets in their components $\tilde{W}^{u,\pm}_{l_u}(\tau)$
and $\tilde{W}^{s,\pm}_{l_s}(\tau)$, which may be disconnected too.

For any fixed $\tau$, let us denote by $W^u_{l_u}(\tau)$ and $W^s_{l_s}(\tau)$
respectively
the connected component of $\tilde{W}^u_{l_u}(\tau)$
and $\tilde{W}^s_{l_s}(\tau)$ containing the origin, which are $1$-dimensional manifolds, as we will
see just below. We stress that there is no abuse of notation since,
if \textbf{C} is assumed, $\tilde{W}^u_{l_u}(\tau)$
and $\tilde{W}^s_{l_s}(\tau)$ are $1$-dimensional connected manifolds so they coincide with $W^u_{l_u}(\tau)$ and $W^s_{l_s}(\tau)$
respectively. Similarly, we can introduce the connected branches departing from the origin ${W}^{u,\pm}_{l_u}(\tau)\subset \tilde{W}^{u,\pm}_{l_u}(\tau)$
and ${W}^{s,\pm}_{l_s}(\tau) \subset \tilde{W}^{s,\pm}_{l_s}(\tau)$ without abuse of notation.

\begin{lemma}\label{corrW}
Assume   $\Gumeno$ and  $\Gsmeno$, then there are $1$-dimensional  immersed manifolds $W^u_{l_u}(\tau)$ and $W^s_{l_s}(\tau)$
with the following properties: they contain the origin, they are connected, and they are subsets of the
sets $\tilde{W}^u_{l_u}(\tau)$
and $\tilde{W}^s_{l_s}(\tau)$ characterized as in~\eqref{WueWs}.
\end{lemma}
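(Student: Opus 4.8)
The plan is to construct $W^u_{l_u}(\tau)$ and $W^s_{l_s}(\tau)$ by a Lyapunov--Perron fixed point argument based on exponential dichotomy, treating the two in the dual way; I describe the unstable manifold, the stable one being identical after reversing time. The first step is to record that the linear system $\dot{\boldsymbol{w}}=\mathcal{A}_{l_u}(t)\boldsymbol{w}$ admits an exponential dichotomy on $(-\infty,\tau]$. Indeed, as $t\to-\infty$ we have $h(\eu^t)\to\eta$, so $\mathcal{A}_{l_u}(t)$ converges to the constant matrix whose eigenvalues are $\lambda_1=\gamma_{l_u}+\kappa(\eta)<0<\lambda_2=\alpha_{l_u}-\kappa(\eta)$, the sign condition being precisely $l_u\in(2_*(\eta),{\rm I}(\eta))$, which is part of $\Gumeno$. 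Moreover assumption \textbf{H} gives $\frac{h(r)-\eta}{r}\in L^1(0,1]$, which after the substitution $r=\eu^t$ reads $\int_{-\infty}^{0}|h(\eu^t)-\eta|\,dt<\infty$, so $\mathcal{A}_{l_u}(t)$ is an $L^1$-perturbation of its hyperbolic limit. By the roughness of exponential dichotomies (\cite{Co65,Co78}) the dichotomy persists on $(-\infty,\tau]$, with one-dimensional unstable and stable directions; in particular $\ell^u(\tau)$ from \eqref{ellus} and Remark~\ref{differenze2} is well defined.

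Second, I would set up the fixed point. Let $U(t,s)$ be the evolution operator of the linear part and $P(s),I-P(s)$ the dichotomy projections, $I-P(s)$ projecting onto the unstable direction. Writing the nonlinear term of \eqref{si.hardy} as $\boldsymbol{N}(\boldsymbol{w},t)=(0,-g_{l_u}(x,t))$, a trajectory $\boldsymbol{w}(t)=\boldsymbol{x_{l_u}}(t,\tau,\Q)$ that is small and tends to the origin as $t\to-\infty$ solves
\begin{equation*}
\boldsymbol{w}(t) = U(t,\tau)(I-P(\tau))\xi - \int_{t}^{\tau} U(t,s)\,(I-P(s))\,\boldsymbol{N}(\boldsymbol{w}(s),s)\,ds + \int_{-\infty}^{t} U(t,s)\,P(s)\,\boldsymbol{N}(\boldsymbol{w}(s),s)\,ds,
\end{equation*}
parametrized by $\xi$ in the unstable space at time $\tau$. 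Here $\Gumeno$ is used decisively: the hypotheses $g_{l_u}(0,t)=\partial_x g_{l_u}(0,t)=0$ make $\boldsymbol{N}(\cdot,t)$ vanish to second order at the origin, hence Lipschitz with arbitrarily small constant on a small ball $B_\rho$, uniformly in $t\le\tau$ (this is exactly where the uniform continuity of $g_{l_u}$ for $t\le\tau$ enters). On the Banach space of continuous $\boldsymbol{w}\colon(-\infty,\tau]\to\RR^2$ with $\sup_{t\le\tau}\eu^{-\nu(t-\tau)}|\boldsymbol{w}(t)|$ finite, for a positive rate $\nu<\lambda_2$ in suitable relation with $|\lambda_1|$, the right-hand side is a contraction once $\rho$ and $|\xi|$ are small, producing a unique small orbit $\boldsymbol{w}(\cdot;\xi)$. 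The map $\xi\mapsto\boldsymbol{w}(\tau;\xi)=:\Q$ is as smooth as $g_{l_u}$ and its image is a one-dimensional graph over $\ell^u(\tau)$, tangent to $\ell^u(\tau)$ at the origin, consistently with Remarks~\ref{differenze2} and~\ref{tangente}; call it the local manifold $W^{u}_{\mathrm{loc}}(\tau)$.

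Third, and this is where the absence of \textbf{C} must be confronted, every orbit produced above stays in $B_\rho$ and converges to the origin for all $t\le\tau$, so it is automatically continuable on the whole half-line $(-\infty,\tau]$; hence $\Q=\boldsymbol{w}(\tau;\xi)\in\tilde{W}^u_{l_u}(\tau)$ by the characterization \eqref{WueWs}. Thus $W^{u}_{\mathrm{loc}}(\tau)$ is a connected one-dimensional manifold through the origin contained in $\tilde{W}^u_{l_u}(\tau)$. To reach the connected component $W^u_{l_u}(\tau)$ of $\tilde{W}^u_{l_u}(\tau)$ containing the origin I would extend by the flow: any $\Q_0$ in this component has $\boldsymbol{x_{l_u}}(\sigma,\tau,\Q_0)\in W^{u}_{\mathrm{loc}}(\sigma)$ for $\sigma$ sufficiently negative, and since continuability up to time $\tau$ is an open condition, the map $\Q\mapsto\boldsymbol{x_{l_u}}(\tau,\sigma,\Q)$ is a local diffeomorphism from a relatively open piece of $W^{u}_{\mathrm{loc}}(\sigma)$ onto a neighborhood of $\Q_0$ inside $\tilde{W}^u_{l_u}(\tau)$, charting $W^u_{l_u}(\tau)$ as a one-dimensional immersed manifold. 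The stable manifold $W^s_{l_s}(\tau)$ is obtained identically, using $\Gsmeno$, the limit $h(\eu^t)\to\beta$ as $t\to+\infty$ together with $\frac{h(r)-\beta}{r}\in L^1[1,+\infty)$, and the dichotomy on $[\tau,+\infty)$.

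The main obstacle is precisely the loss of global existence: in the classical theory one extends the local manifold by the flow over all of $\RR$, but here some orbits issuing from $W^{u}_{\mathrm{loc}}(\sigma)$ blow up before time $\tau$, so $\tilde{W}^u_{l_u}(\tau)$ genuinely disconnects. The argument must therefore (i) certify that the local, small orbits never blow up backward, which the a priori bound $|\boldsymbol{w}|\le\rho$ guarantees, and (ii) isolate the connected component through the origin when pushing forward, exploiting the openness of the continuability time. Making these two points rigorous—rather than the routine contraction estimate—is the heart of the matter, and is where the extension of non-autonomous invariant manifold theory developed in the Appendix is needed.
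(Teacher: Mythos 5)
Your proposal follows the same three-step route as the paper's own proof: (i) exponential dichotomy for the linearization on half-lines, obtained from \textbf{H} by viewing $\mathcal{A}_{l_u}(t)$ as an $L^1$ perturbation of its hyperbolic limit (this is exactly how the paper arrives at \eqref{dichotu}); (ii) a Lyapunov--Perron / variation-of-constants construction of the local manifold $W^u_{l_u,loc}(\tau)$ as a graph over $\ell^u(\tau)$, which is the paper's Lemma~\ref{loc} (proved there by citing Coppel and Johnson together with a truncation argument, using $g_{l_u}(0,t)=\partial_x g_{l_u}(0,t)=0$ in the same way you do); and (iii) globalization by the flow restricted to the initial data that survive up to time $\tau$, the relevant openness being the lower semicontinuity of the blow-up time $\mathfrak{T}(\Q,\tau)$ in the paper's proof. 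Steps (i) and (ii) are sound and match the paper.

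The gap is in step (iii). You take $W^u_{l_u}(\tau)$ to be the topological connected component of $\tilde{W}^u_{l_u}(\tau)$ containing the origin and claim that $\Q\mapsto\boldsymbol{x_{l_u}}(\tau,\sigma,\Q)$ maps a relatively open piece of $W^u_{l_u,loc}(\sigma)$ onto a \emph{neighborhood of $\Q_0$ inside} $\tilde{W}^u_{l_u}(\tau)$. Openness of the continuation time does not give this: a point $\Q'\in\tilde{W}^u_{l_u}(\tau)$ close to $\Q_0$ is only known to satisfy $\boldsymbol{x_{l_u}}(\sigma',\tau,\Q')\in W^u_{l_u,loc}(\sigma')$ for some $\sigma'=\sigma'(\Q')$ which may be arbitrarily negative (its backward orbit can leave the box $Q(\delta)$ before settling into it), and continuous dependence on $[\sigma',\tau]$ degenerates as $\sigma'\to-\infty$, so nothing places $\Q'$ in $\Phi_{\tau,\sigma}\bigl(W^u_{l_u,loc}(\sigma)\bigr)$ for your fixed $\sigma$. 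Consequently your charts cover only the points joined to the origin through surviving arcs of local manifolds; a priori the topological component can be strictly larger and need not be a manifold at all, since $\tilde{W}^u_{l_u}(\tau)$ may break into infinitely many branches (as the paper stresses) which could accumulate on the branch through the origin. The paper's proof of Lemma~\ref{corrW} never makes this identification as part of the argument: it defines $W^{u,+}_{l_u}(\tau):=\bigcup_{\sigma\le\tau}\Phi_{\tau,\sigma}\mathcal{W}_1(\tau,\sigma)$, where $\mathcal{W}_1(\tau,\sigma)$ is the connected component through the origin of the relatively open surviving subset of $W^{u,+}_{l_u,loc}(\sigma)$; this is a nested union of connected arcs with endpoint at the origin, hence a connected $1$-dimensional immersed manifold contained in the set characterized by \eqref{WueWs}, which is all that Lemma~\ref{corrW} asserts. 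Your argument becomes correct, and essentially identical to the paper's, once you replace ``the connected component of $\tilde{W}^u_{l_u}(\tau)$'' by this increasing union as the \emph{definition} of $W^u_{l_u}(\tau)$ (and likewise for $W^s_{l_s}(\tau)$), dropping the relative-neighborhood claim.
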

Lemma~\ref{corrispondenze} continues to hold with the following changes.

  \begin{lemma}\label{corrispondenze.bis}
Assume $\Gumeno$ and  $\Gsmeno$.
Consider the trajectory $\boldsymbol{x_{l_u}}(t,\tau,\Q)$ of~(\ref{si.hardy}) with $l=l_u \in (2_*(\eta), {\rm I}(\eta))$, and
the corresponding trajectory $\boldsymbol{x_{l_s}}(t,\tau,\R)$ of~(\ref{si.hardy}) with $l=l_s\in (2_*(\beta), {\rm I}(\beta))$.
Let $u(r)$ be the corresponding solution of~(\ref{hardy}).
 Then $\R=\Q \textrm{exp}[ -(\alpha_{l_u}-\alpha_{l_s}) \tau]$.
\begin{equation*}
\begin{aligned}
	u(r) \text{ is a \Rsol-solution}
	&\,\iff\, &
  \Q \in \tilde W^u_{l_u}(\tau)
 & \,\iff\, &
  \R \in \tilde W^u_{l_s}(\tau)\,,
\\
	u(r) \text{ is a \fdsol-solution}
	& \,\iff\, &
  \Q \in \tilde W^s_{l_u}(\tau)
  & \,\iff\, &
  \R \in \tilde W^s_{l_s}(\tau) \,.
\end{aligned}
\end{equation*}
Recall that $W^u_{l_u}(\tau)\subset \tilde{W}^u_{l_u}(\tau)$, $W^u_{l_s}(\tau)\subset \tilde{W}^u_{l_s}(\tau)$.
Consider a \Rsol-solution $u(r)$, then we can find $N \gg 0$ such that $\Q \in W^u_{l_u}(\tau)$ and $\R \in W^u_{l_s}(\tau)$
for any $\tau <-N$. Similarly consider a \fdsol-solution $u(r)$, then we can find $N \gg 0$ such that $\Q \in W^s_{l_u}(\tau)$ and $\R \in W^s_{l_s}(\tau)$
for any $\tau >N$.
\end{lemma}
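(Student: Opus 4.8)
The plan is to reduce the statement to the two elementary correspondences ``\Rsol-solution $\iff$ convergence to the origin in the past'' and ``\fdsol-solution $\iff$ convergence to the origin in the future'', which are already available for $h\equiv 0$ under \textbf{C} in Lemma~\ref{corrispondenze}, and to upgrade them to the present Hardy, possibly non-continuable setting. I first dispose of the equivalences relating $\Q$ and $\R$. If $u(r)$ is the solution of~\eqref{hardy} attached to $\boldsymbol{x_{l_u}}(t,\tau,\Q)$ via~\eqref{transf1}, then the \emph{same} $u$ is attached to $\boldsymbol{x_{l_s}}(t,\tau,\R)$, and the two Fowler transforms differ only by the power applied to the common pair $(u,u')$, so $\boldsymbol{x_{l_s}}(t)=\boldsymbol{x_{l_u}}(t)\,\eu^{(\alpha_{l_s}-\alpha_{l_u})t}$. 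Evaluating at $t=\tau$ gives exactly $\R=\Q\exp[-(\alpha_{l_u}-\alpha_{l_s})\tau]$. Since $\tilde W^u_{l_s}(\tau)$ and $\tilde W^s_{l_u}(\tau)$ are, by the very definition of the homothetic sets in~\eqref{cambioL}, the images of $\tilde W^u_{l_u}(\tau)$ and $\tilde W^s_{l_s}(\tau)$ under this scaling, the equivalences $\Q\in\tilde W^u_{l_u}(\tau)\iff\R\in\tilde W^u_{l_s}(\tau)$ and $\Q\in\tilde W^s_{l_u}(\tau)\iff\R\in\tilde W^s_{l_s}(\tau)$ hold by construction.

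It remains to prove that $u$ is an \Rsol-solution iff $\Q\in\tilde W^u_{l_u}(\tau)$, the \fdsol-case being symmetric upon trading $t\to-\infty$ for $t\to+\infty$ and the exponent $\kappa(\eta)$ for $n-2-\kappa(\beta)$. Writing $\kappa=\kappa(\eta)$ and $r=\eu^t$, relation~\eqref{transf1} gives $x_{l_u}(t)=[u(r)r^{\kappa}]\,r^{\alpha_{l_u}-\kappa}$, where $\alpha_{l_u}-\kappa=\lambda_2>0$ is precisely the positive eigenvalue of the linearization at the origin. Hence, if $u$ is an \Rsol-solution, $u(r)r^{\kappa}\to d$ forces $x_{l_u}(t)\to 0$ as $t\to-\infty$; the companion asymptotics $u'r^{\kappa+1}\to-\kappa d$ provided by the structure of \Rsol-solutions (Remark~\ref{differenze1}) gives $y_{l_u}(t)\to 0$ as well. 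Thus $\boldsymbol{x_{l_u}}(t,\tau,\Q)\to(0,0)$ and $\Q\in\tilde W^u_{l_u}(\tau)$ by~\eqref{WueWs}; note in passing that $y_{l_u}/x_{l_u}\to-\kappa=m^u(-\infty)$, in agreement with the tangency recorded in Remark~\ref{differenze2}.

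For the converse I would argue as in Remark~\ref{differenze1}, which already asserts the persistence of Lemma~\ref{corrispondenze} for \Rsol/\fdsol-solutions. The linear part $\mathcal A_{l_u}(t)$ admits an exponential dichotomy, because $h(\eu^t)\to\eta$ in the sense dictated by \textbf{H} and $g_{l_u}$ is superlinearly small near the origin by $\Gumeno$; a backward trajectory with $\boldsymbol{x_{l_u}}(t,\tau,\Q)\to(0,0)$ must then decay like $\eu^{\lambda_2 t}$ tangentially to $\ell^u(-\infty)$, and undoing~\eqref{transf1} yields $u(r)r^{\kappa}\to d$ for some $d\in\RR$. I expect this to be the delicate step: unlike in Lemma~\ref{corrispondenze}, continuability is not available, so the dichotomy and the tangential rate must be obtained on backward half-trajectories that stay near the origin, where the flow is nevertheless well defined; this is exactly the machinery developed in the Appendix, on which I would rely.

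Finally, the claims that $\Q\in W^u_{l_u}(\tau)$ and $\R\in W^u_{l_s}(\tau)$ for $\tau<-N$ (and symmetrically for \fdsol-solutions with $\tau>N$) follow by combining the forward computation above with Lemma~\ref{corrW}. Near the origin the local unstable manifold is a connected $1$-dimensional manifold, so there is $\varepsilon>0$ with $\tilde W^u_{l_u}(\tau)\cap B_\varepsilon=W^u_{l_u}(\tau)\cap B_\varepsilon$; since $\Q(\tau)=\boldsymbol{x_{l_u}}(\tau,\tau,\Q)\to(0,0)$ as $\tau\to-\infty$, for all $\tau<-N$ the point $\Q(\tau)$ lies in $B_\varepsilon$ and hence in the connected component $W^u_{l_u}(\tau)$ containing the origin. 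Applying once more the homothety $\R=\Q\exp[-(\alpha_{l_u}-\alpha_{l_s})\tau]$ transports this membership to $\R\in W^u_{l_s}(\tau)$, and the \fdsol-case is identical with $t\to+\infty$.
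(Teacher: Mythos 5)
Your homothety computation, your use of exponential dichotomy for the implication ``$\Q \in \tilde W^u_{l_u}(\tau) \Rightarrow u$ is an \Rsol-solution'', and your final claim about eventual membership in the connected components all run parallel to the paper's proof. The genuine gap is in the opposite implication, ``\Rsol-solution $\Rightarrow \Q \in \tilde W^u_{l_u}(\tau)$''. Definition~\ref{defRFS} constrains only $u$, not $u'$: from $u(r)r^{\kappa(\eta)} \to d$ you correctly get $x_{l_u}(t)\to 0$ as $t\to-\infty$, but nothing yet about $y_{l_u}(t)$. You close this hole by attributing to Remark~\ref{differenze1} the companion asymptotics $u'(r)r^{\kappa(\eta)+1}\to-\kappa(\eta)\,d$; Remark~\ref{differenze1} does not state this, and, more importantly, the paper explicitly postpones the proofs of Remarks~\ref{differenze1}--\ref{differenze3} to the Appendix, where they are obtained as consequences of Lemma~\ref{corrispondenze.bis} itself (via the dichotomy along the manifolds). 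So this step assumes exactly the correspondence being proven: it is circular. The paper proves this direction by contraposition, using the dynamics rather than the definition: if $\Q \notin \tilde W^u_{l_u}(\tau)$, then $|\boldsymbol{x_{l_u}}(t,\tau,\Q)|$ stays uniformly positive at least along a sequence $t_n\to-\infty$ (a limit-set/phase-plane fact, cf.\ Remark~\ref{sing}), and then $u(r)r^{\kappa(\eta)}$ cannot have a finite limit. If you want to keep your direct route you must show, from system~\eqref{si.hardy} itself, that $x_{l_u}\to 0$ with $y_{l_u}\not\to 0$ is impossible: a variation-of-constants argument on the $y$-equation shows that otherwise $y_{l_u}$ grows backward like the stable mode $\eu^{(\gamma_{l_u}+\kappa(\eta))t}$, which after undoing~\eqref{transf1} forces $u(r)\sim c\,r^{-(n-2-\kappa(\eta))}$ and hence $u(r)r^{\kappa(\eta)}\to\infty$, a contradiction. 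Some such argument is needed and is absent from your text.

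A second, smaller, inaccuracy: the identity $\tilde W^u_{l_u}(\tau)\cap B_\varepsilon = W^u_{l_u}(\tau)\cap B_\varepsilon$ is not justified, since components of $\tilde W^u_{l_u}(\tau)$ other than the one through the origin may re-enter every neighbourhood of the origin (this is precisely the pathology the Appendix is built to handle). The correct statement, which is what the paper uses, is pointwise along your trajectory: since $\boldsymbol{x_{l_u}}(t)\to(0,0)$ as $t\to-\infty$, for $\tau\le -N$ the whole tail $\{\boldsymbol{x_{l_u}}(t)\mid t\le\tau\}$ lies in the box $Q(\delta)$ of Lemma~\ref{loc}, so the point $\boldsymbol{x_{l_u}}(\tau)$ belongs to the local manifold $W^u_{l_u,loc}(\tau)$ characterized in~\eqref{Wubarloc}; this local manifold is a connected graph over $\ell^u(\tau)$ containing the origin, hence it is contained in the connected component $W^u_{l_u}(\tau)$. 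With these two repairs your argument essentially coincides with the paper's proof.
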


The proofs of the previous lemmas are postponed to Appendix~\ref{app1}.

\begin{remark}\label{tangentiprol}
  We stress that the tangents to
  $\tilde{W}^u_{l_u}(\tau)$ and to
  $\tilde{W}^s_{l_s}(\tau)$ in the origin coincide by construction with the
  tangents to $W^u_{l_u}(\tau)$ and to
  $W^s_{l_s}(\tau)$. Hence, in Remark~\ref{tangente} we can remove assumption
  \textbf{C}.
  \end{remark}

\subsection{Kelvin inversion and Fowler transformation}\label{SecKelvin}

  An important tool in the investigation of
 equations like~(\ref{hardy})
  is a change of variables known as Kelvin inversion.
  Let us set
\begin{equation}\label{kelvinvero}
\begin{split}
  s=1/r \, , \quad  \tilde{u}(s)=u(1/s) s^{2-n}\, ,\quad  \tilde{f}(\tilde{u},s)=
  f(\tilde{u} s^{n-2},1/s)s^{-2-n} \, .
\end{split}
\end{equation}
From a straightforward computation we see that  $u(r)$ satisfies~(\ref{hardy})
if and only if  $\tilde{u}(s)$ satisfies the following equation:
\begin{equation}\label{kelvinvero1}
\frac{d}{ds} [\tilde{u}_{s}(s)s^{n-1}] + \tilde{f}(\tilde{u},s)s^{n-1}=0\, .
\end{equation}
In particular, if $f$ is of type~(\ref{regular}) then $\tilde{f}(u,s)= k(1/s) s^{(n-2)(q-2^*)} u|u|^{q-2}$.

We stress that  \Rsol-solutions $u(r,d)$ of~(\ref{hardy}) are driven by
(\ref{kelvinvero}) into \fdsol-solutions $\tilde{v}(s,d)=u(1/s,d)s^{2-n}$
of~(\ref{kelvinvero1}), while
\fdsol-solutions $v(r,L)$ of~(\ref{hardy}) are driven
into \Rsol-solutions $\tilde{u}(s,L)=v(1/s,L)s^{2-n}$
of~(\ref{kelvinvero1}); we emphasize that $d=\liro u(r)r^{\kappa(\eta)}=\lim_{s \to +\infty} \tilde{v}(s)s^{n-2-\kappa(\eta)}$,
and $L=\lim_{r \to +\infty} v(r)r^{n-2-\kappa(\beta)}=\lim_{s \to 0}\tilde{u}(s)s^{n-2-\kappa(\beta)}$.
It is important to observe that generically if $f$ is subcritical (respectively supercritical)
then $\tilde{f}$ is supercritical (resp. subcritical), see, e.g., \cite[§2]{Fjdde} for the analogous statement for \eqref{eq.na}.

\medbreak

We emphasize that  Kelvin inversion~(\ref{kelvinvero})
 assumes a more clear form when it is
combined with  Fowler transformation~(\ref{transf1}).
In fact, when we  apply~(\ref{transf1}) to~(\ref{kelvinvero1}), by setting
\begin{equation}\label{transf2}
\begin{cases}
  \tau=-t \\
  \tilde{x}_l(\tau)=\tilde{u}(\eu^{\tau}) \eu^{-\gamma_l \tau}=
  u(\eu^{-\tau})\eu^{-\alpha_l \tau}= u(\eu^{t})\eu^{\alpha_l t} \\
  \tilde{y}_l(\tau)=\tilde{u}'(\eu^{\tau}) \eu^{(-\gamma_l+1) \tau}
   = -u'(\eu^{t})\eu^{(\alpha_l+1)t}-(n-2)u(\eu^{t})\eu^{\alpha_l t}
  \end{cases}
\end{equation}
we simply pass from system~(\ref{si.hardy}) to the following one:
\begin{equation} \label{si.kelvin.hardy}
\left( \begin{array}{c}
\frac{\partial \tilde{x}_l }{\partial \tau}   \\[1mm]
\frac{\partial \tilde{y}_l }{\partial \tau}    \end{array}\right) = \left( \begin{array}{cc}
 -\gamma_l & 1\\
 -h(\eu^{-\tau}) & -\alpha_l
\end{array} \right)
\left( \begin{array}{c} \tilde{x}_l \\ \tilde{y}_l  \end{array}\right) +\left(
\begin{array}{c} 0 \\-
g_l(\tilde{x}_l,-\tau)\end{array}\right) .
\end{equation}

We stress that~(\ref{si.kelvin.hardy}) is obtained from~(\ref{si.hardy})
simply by changing the values of the parameters $(\alpha_l,\gamma_l)$ into
$(-\gamma_l,-\alpha_l)$, and evaluating the functions $g_l(x,t)$ and $h(\eu^t)$ in  $-\tau$ in spite of $\tau$.
We give the details of the computation for   reader's convenience.
Let us set $f_h(u,r):=\frac{h(r)}{r^2}u+f(u,r)$ and introduce $\tilde f_h$ as in~\eqref{kelvinvero}, then
\begin{equation*}
\begin{split}
 \frac{\partial}{\partial \tau}\tilde{x}_l(\tau)&=- \gamma_l \, \tilde{u}(\eu^{\tau}) \,\eu^{-\gamma_l \tau}+
\tilde{u}'(\eu^{\tau}) \eu^{(-\gamma_l+1) \tau}=-\gamma_l \, \tilde{x}_l(\tau)+\tilde{y}_l(\tau) \\
\frac{\partial}{\partial \tau}\tilde{y}_l(\tau)&=\frac{\partial}{\partial \tau}
\big[\big(\tilde{y}_l(\tau)\,\eu^{\alpha_l \tau}\big)\,\eu^{-\alpha_l \tau}\big]
=-\alpha_l \, \tilde{y}_l(\tau) +\eu^{-\alpha_l \tau}\frac{\partial}{\partial \tau}
\big[ \tilde{u}'(\eu^{\tau})\,\eu^{(n-1) \tau} \big]\\ &
=-\alpha_l \,\tilde{y}_l(\tau)- \tilde{f}_h(\tilde{u}(\eu^{\tau}),\eu^{\tau})
\eu^{(n-\alpha_l) \tau}\\ &
=-\alpha_l \,\tilde{y}_l(\tau)- f_h(\tilde{u}(\eu^{\tau})\eu^{(n-2)\tau},\eu^{-\tau})
\eu^{-(\alpha_l+2)\tau}\\ &=
-\alpha_l \, \tilde{y}_l(\tau)- f_h(\tilde{x}_l(\tau)\eu^{\alpha_l\tau},\eu^{-\tau})
\eu^{-(\alpha_l+2)\tau}\\
&=-\alpha_l \,\tilde{y}_l(\tau)-h(\eu^{-\tau}) \tilde{x}_l(\tau) -g_l(\tilde{x}_l(\tau),-\tau) \,.
\end{split}
\end{equation*}

\medbreak

Let us assume that $f(u,r)$ satisfies $\Gupiu$ with $l_u=\bar{l}_u>2_*$  (respectively
$\Gspiu$ with $l_s=\bar{l}_s>2_*$) then $\tilde{f}(u,r)$ satisfies $\Gspiu$ with $l_s=\bar{L}_s>2_*$  (resp.
$\Gupiu$ with $l_u=\bar{L}_u>2_*$), where
$$
 \bar{L}_s= 2- \frac{2}{\gamma_{\bar{l}_u}} =\frac{2[\bar{l}_u(n-1)-2n]}{\bar{l}_u(n-2)-2n+2} \,;
\quad \bar{L}_u= 2- \frac{2}{\gamma_{l_s}} =\frac{2[\bar{l}_s(n-1)-2n]}{\bar{l}_s(n-2)-2n+2}
$$
In particular we have $\alpha_{\bar L_s} =- \gamma_{\bar l_u}$ and $\gamma_{\bar L_s} =- \alpha_{\bar l_u}$ (resp.
$\alpha_{\bar L_u} =- \gamma_{\bar l_s}$ and $\gamma_{\bar L_u} =- \alpha_{\bar l_s}$).

We emphasize that, if $g_{l}(x,T)$ is $T-$independent, the condition $\alpha_l+\gamma_l>0$
means that~(\ref{hardy}) is subcritical (respectively $\alpha_l+\gamma_l<0$ and $\alpha_l+\gamma_l=0$
mean~(\ref{hardy}) supercritical and critical). Hence, it is clear that when we pass
from~(\ref{si.hardy}) to~(\ref{si.kelvin.hardy}), the unstable manifold $W^u(T)$ is driven
into the  stable manifold $W^s(-T)$ and viceversa, and a subcritical system is driven into a
supercritical system.

Moreover we emphasize that, in presence of a Hardy potential, we have e.g.
\begin{equation}\label{hardyinversion}
2_*(\beta) < \bar l_u < 2^* \quad\Rightarrow\quad 2^*<\bar L_s<{\rm I}(\beta)\,.
\end{equation}

\subsection{Statement of the results.}\label{secST}

\medbreak

Let us introduce some further assumptions we will assume together with
$\Gumeno$   and $\Gsmeno$.
\begin{description}
\item[$\Lu$] There is $\mathfrak{T} \in \RR$,   such that
$ \frac{g_{l_u}(x,t)}{x} \le 0$   for any $x \in \RR$ and  any $t < \mathfrak{T}$  and $\liminf\limits_{|x| \to +\infty}\frac{g_{l_s}(x,t)}{x} \ge 0$
for any $t >\mathfrak{T}$.
\item[$\Ls$] There is $\mathfrak{T} \in \RR$,   such that
$ \frac{g_{l_s}(x,t)}{x} \le 0$   for any $x \in \RR$ and  any $t > \mathfrak{T}$  and $\liminf\limits_{|x| \to +\infty}\frac{g_{l_u}(x,t)}{x} \ge 0$
for any $t <\mathfrak{T}$.
\end{description}

We  stress that $\Lu$ is trivially satisfied if   $f$ is as in~(\ref{unaeffe}a) and \textbf{K} holds, just setting
  $\mathfrak T = \ln R$. By symmetry, if $f$ is as in~(\ref{unaeffe}b) and \textbf{K} holds, $\Ls$ follows.

We are now ready to state the main results, using the terminology introduced in Definition~\ref{defRFS}.
\begin{thm}\label{tm3}
Consider~\eqref{hardy}
 and assume $\Gspiu$ with $K>0$ and  $l_s\in(2^*, {\rm I}(\beta))$,   $\Gumeno$ and $\Lu$.
Then there are four positive strictly increasing sequences $(A_k^\pm)_{k\geq0}$ and $(B_k^\pm)_{k\geq0}$ such that
$u(r,A_0^+)=v(r,B_0^+)$ is a positive \sol R0f, respectively $u(r,-A_0^-)=v(r,-B_0^-)$ is a negative \sol R0f.

For any $k>1$,
$u(r,\pm A_k^\pm)$ is a \sol Rkf. In particular we have,
$u(r,\pm A_{2j}^\pm)=v(r,\pm B_{2j}^\pm)$ and $u(r,\pm A_{2j+1}^\pm)=v(r,\mp B_{2j+1}^\mp)$.
Moreover $u(r,d)$ is a positive \sol R0s for any $0<d<A_0^+$, and
  for any $k>0$, there is $a_k^+ \in [A_{k-1}^+; A_k^+)$ such that
  $u(r,  d)$ is a \sol Rks whenever $d \in (a_{k}^+,A_{k}^+)$. An analogous
  statement holds for \sol R{}s $u(r,d)$ where $d<0$.
\end{thm}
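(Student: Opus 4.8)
The plan is to restate the claim entirely in the language of the systems \eqref{si.hardy} and to recover the solutions of \eqref{hardy} as intersections of the invariant manifolds built in Section~\ref{sec2.4}, in the spirit of Remark~\ref{corrisp} and Lemma~\ref{corrispondenze.bis}. By that lemma a \Rsol-solution $u(r,d)$ corresponds, for $\tau\ll0$, to a trajectory on the branch $W^{u,+}_{l_u}(\tau)$ of the unstable manifold, a \fdsol-solution corresponds, for $\tau\gg0$, to a trajectory on $W^{s}_{l_s}(\tau)$, and the homothety $\R=\Q\exp[-(\alpha_{l_u}-\alpha_{l_s})\tau]$ of \eqref{cambioL} lets one compare them in a single plane; hence a \sol R{}f solution is exactly a homoclinic trajectory, i.e.\ a contact of $W^u_{l_u}(\tau)$ with $W^s_{l_s}(\tau)$. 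First I would parametrise $W^{u,+}_{l_u}(\tau)$ by $d>0$ and follow the associated trajectory of the $l=l_s$ system as $t$ grows, counting its windings around the origin; each winding crossing the $y$-axis produces one nondegenerate zero of $u(r,d)$, following the ideas of~\cite{BDF,DF,JK} and using the energy/Pohozaev functions that locate the autonomous manifolds (cf.~\cite{F2,FmSa,Poho}).

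The two regimes are governed by $\Lu$ and $\Gspiu$. For $t<\mathfrak T$ assumption $\Lu$ gives $g_{l_u}(x,t)/x\le0$, so the absorption term pushes the unstable branch outward and keeps it in one quadrant, which both controls where the trajectory enters the source region and rules out spurious zeros for $r$ small. For $t\to+\infty$ assumption $\Gspiu$ with $K>0$ and $l_s\in(2^*,{\rm I}(\beta))$ makes the system asymptotically autonomous with a saddle at the origin and a stable point $\boldsymbol{P^+}$ (see Figure~\ref{livelli} and Remarks~\ref{sing},~\ref{differenze1.5}); by Remark~\ref{allinfinito} the stable manifold $W^s_{l_s}(\tau)$ converges to $M^s=W^s_{l_s}(+\infty)$ as $\tau\to+\infty$, and in the supercritical case $M^s$ spirals around $\boldsymbol{P^+}$. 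Thus trajectories trapped in the basin of $\boldsymbol{P^\pm}$ are \sdsol-solutions (asymptotic to $\pm P_x r^{\alpha_{l_s}}$), those landing on $W^s_{l_s}(\tau)$ are \fdsol-solutions, and those crossing the $y$-axis pass to the other half-plane and gain a further zero; the $\liminf$ condition in $\Lu$ forces the source to dominate for $|x|$ large, which is what drives the winding.

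The core is then a shooting/rotation argument. I would introduce an angular coordinate along $W^{u,+}_{l_u}$ and show, via continuous dependence (the smoothness in Remark~\ref{allinfinito} and the connected branches of Lemma~\ref{corrW}), that the winding number, hence the zero count, varies continuously and monotonically with $d$. For $d$ small the trajectory falls directly into the basin of $\boldsymbol{P^+}$ with no zero, giving a positive \sol R0s on $(0,A_0^+)$; the first $d$ at which the branch meets $W^s_{l_s}$ is $A_0^+$, where $u(r,A_0^+)$ is the positive \sol R0f. For $d$ slightly larger the trajectory misses the origin and is recaptured by a stable point, so it is slow decay, at first still with no new zero and then, after a tangential crossing of the $y$-axis at some $a_1^+\in[A_0^+,A_1^+)$, with one zero, i.e.\ \sol R1s on $(a_1^+,A_1^+)$; the next contact with $W^s_{l_s}$, at $d=A_1^+$, yields the \sol R1f solution. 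Iterating, the winding grows by one between consecutive contacts, which produces the increasing sequence $(A_k^+)$, the \sol Rkf at $d=A_k^+$, and the intervals $(a_k^+,A_k^+)$ of \sol Rks solutions. The branch $W^{u,-}_{l_u}$ gives $(A_k^-)$ by symmetry. Each \sol Rkf solution is also a \fdsol-solution, with coefficient $B_k^\pm:=\lir u(r,\pm A_k^\pm)\,r^{n-2-\kappa(\beta)}$; since a solution with $k$ zeros keeps the sign of $d$ near $r=0$ and has sign $(-1)^k\,\mathrm{sign}(d)$ near $r=\infty$, the parities $u(r,\pm A_{2j}^\pm)=v(r,\pm B_{2j}^\pm)$ and $u(r,\pm A_{2j+1}^\pm)=v(r,\mp B_{2j+1}^\mp)$ follow, while Kelvin inversion (Section~\ref{SecKelvin}), which exchanges $W^u_{l_u}$ with $W^s_{l_s}$ and $d$ with $L$, gives the monotonicity of $(B_k^\pm)$.

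The main obstacle, absent from~\cite{FmSa}, is the loss of continuability: for $d$ large the trajectory may blow up in the absorption region, so the full sets $\tilde W^u_{l_u}(\tau)$ and $\tilde W^s_{l_s}(\tau)$ may be disconnected and the classical non-autonomous manifold theory does not apply. The shooting must therefore be run on the connected components $W^u_{l_u}(\tau)$, $W^s_{l_s}(\tau)$ through the origin provided by Lemma~\ref{corrW} and the extension in Appendix~\ref{app1}, and the rotation count must be shown to stay well defined and continuous up to the blow-up threshold. I expect the delicate point to be proving that the zero count increases by exactly one between consecutive contacts with $W^s_{l_s}$, i.e.\ that no winding is lost as a neighbouring trajectory ceases to be continuable.
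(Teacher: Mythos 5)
Your overall framework (Fowler transformation, identification of \Rsol- and \fdsol-solutions with $W^{u,+}_{l_u}(\tau)$ and $W^{s}_{l_s}(\tau)$ via Lemma~\ref{corrispondenze.bis}, homoclinic trajectories as intersections of the two manifolds, zeros counted by windings, intersections forced by a spiralling stable object against an unbounded, angle-confined unstable branch) coincides with the paper's. But the core of your argument has a genuine gap: you claim that the winding number, hence the zero count, of $u(r,d)$ ``varies continuously and monotonically with $d$'', and you build the sequences $(A_k^+)$, $(a_k^+)$ and the classification of the intermediate intervals on that monotonicity. Under the stated hypotheses this monotonicity is false in general, and the paper is structured precisely to avoid it. As pointed out in Remark~\ref{nontutti} (cf.\ Figure~\ref{sketch}b), the curve $d \mapsto \Gamma^{u,+}(d,\mathfrak T)$ may cross one and the same lap $\Gamma^{s}_{0}$ of the stable spiral at several values $d^*_0<d_a<d_b$; between two such crossings the trajectory re-enters the region bounded by that lap, so the zero count can behave like $0,1,0,0,\dots$ as $d$ increases. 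This is exactly why the theorem only asserts $a_k^+\in[A_{k-1}^+,A_k^+)$ and says nothing about $[A_{k-1}^+,a_k^+]$, and why uniqueness and exact multiplicity (available in~\cite{FmSa}) are lost in this setting; a monotone-winding shooting argument would ``prove'' a clean partition of the $d$-half-line, which is more than what is true.

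The paper's proof replaces monotonicity by topology, in four steps for which your sketch has no substitute. (i) Lemma~\ref{spiral} shows that $W^{s,\pm}_{l_s}(\tau)$ is a spiral with unbounded angle for \emph{every} $\tau\ge\mathfrak T$: this requires the linking-number Lemma~\ref{thetaphiS} together with Lemma~\ref{nongira} (where $l_s>2^*$ and the bound $h<(n-2)^2/4$ enter) to transport the spiral structure from $\tau=+\infty$ back to finite $\tau$ --- a step your appeal to Remark~\ref{allinfinito} alone does not cover. (ii) Lemma~\ref{wu.hardy}, a Wazewski triangle argument using $\Lu$, shows that $W^{u,+}_{l_u}(\mathfrak T)$ is unbounded in radius but confined in angle, so it must exit each region $\hat F_j^\uparrow(\mathfrak T)$; the \emph{first}-crossing values $d^*_j=A_j^+$ of~\eqref{defQed} are then increasing by construction, with no monotonicity of windings needed. (iii) The zero count of $u(r,d^*_j)$ is obtained in Lemma~\ref{gsfd} by summing the angles swept before and after $\mathfrak T$, using $\arctan(n-2-\kappa(\beta))-\arctan(\kappa(\eta))\in(0,\pi)$. (iv) The slow-decay intervals come from the \emph{last} crossing $d_{*,k-1}=a_k^+$ of lap $k-1$ and from trapping the trajectories issuing from the arc $\hat a^u_k(\tau)$ inside the positively invariant region $\hat k_k(t)$ (Remark~\ref{key}, Lemma~\ref{regularslow}), which forces convergence to $\pm\bs{P}$ and hence the \Rsol{}-slow-decay behaviour with exactly $k$ zeros. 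Without (i) and (iv), and with the monotonicity claim unavailable, your shooting scheme cannot produce the stated structure.
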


We recall that if  $f$ satisfies $\Gspiu$ with $K>0$ and  $l_s \in (2^*, {\rm I}(\beta))$, $\Gumeno$ and $\Lu$,
then  $\tilde{f}$ obtained via~\eqref{kelvinvero} satisfies  $\Gupiu$ with  $K>0$ and   $l_u \in (2_*(\beta), 2^*)$, $\Gsmeno$ and $\Ls$.
Moreover \Rsol-solutions are turned into \fdsol-solutions and viceversa, and \Ssol-solutions into \sdsol-solutions, see Subsection~\ref{sec2.4}.
So, applying Kelvin inversion on Theorem~\ref{tm3}, we obtain the following result.

\begin{thm}\label{tm4}
Consider~\eqref{hardy}
and assume $\Gupiu$ with $K>0$ and $l_u \in (2_*(\eta), 2^*)$, $\Gsmeno$ and $\Ls$.
Then there are four positive strictly increasing sequences $(A_k^\pm)_{k\geq0}$ and $(B_k^\pm)_{k\geq0}$ such that
$u(r,A_0^+)=v(r,B_0^+)$ is a positive \sol R0f, while $u(r,-A_0^-)=v(r,-B_0^-)$ is a negative \sol R0f.

For any $k>1$,
$u(r,\pm A_k^\pm)$ is a \sol Rkf. In particular we have,
$u(r,\pm A_{2j}^\pm)=v(r,\pm B_{2j}^\pm)$ and $u(r,\pm A_{2j+1}^\pm)=v(r,\mp B_{2j+1}^\mp)$.

Moreover $v(r,L)$ is a positive \sol S0f for any $0<L<B_0^+$,
and for any $k>0$, there is $b_k^+ \in [B_{k-1}^+; B_k^+)$ such that
  $v(r, L)$ is a \sol Skf  whenever $L \in (b_{k}^+,B_{k}^+)$.
An analogous
  statement holds for \sol S{}f $v(r,L)$ where $L<0$.
\end{thm}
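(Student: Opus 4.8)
The plan is to derive Theorem~\ref{tm4} as the Kelvin dual of Theorem~\ref{tm3}, using the correspondence established in Subsection~\ref{SecKelvin}. I would start from a nonlinearity $f$ fulfilling the hypotheses of Theorem~\ref{tm4} (that is $\Gupiu$ with $K>0$ and $l_u\in(2_*(\eta),2^*)$, together with $\Gsmeno$ and $\Ls$) and form its Kelvin transform $\tilde f$ via~\eqref{kelvinvero}, so that $u(r)$ solves~\eqref{hardy} iff $\tilde u(s)=u(1/s)s^{2-n}$ solves~\eqref{kelvinvero1}. Since the inversion $s=1/r$ exchanges $r\to0$ with $s\to+\infty$, the limiting Hardy value $\eta=h(0)$ becomes the value at infinity for the transformed problem while $\beta=h(+\infty)$ becomes the value at the origin; correspondingly the past condition $\Gupiu$ for $f$ turns into the future condition $\Gspiu$ for $\tilde f$, and the future condition $\Gsmeno$ for $f$ into the past condition $\Gumeno$ for $\tilde f$, and $\Ls$ into $\Lu$ by the same symmetry. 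By~\eqref{hardyinversion}, read with $\eta$ in the role of $\beta$, the range $l_u\in(2_*(\eta),2^*)$ is mapped exactly onto $\tilde l_s\in(2^*,{\rm I}(\eta))$, which is the range demanded by Theorem~\ref{tm3}. Hence $\tilde f$ satisfies all the hypotheses of Theorem~\ref{tm3}.

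Next I would apply Theorem~\ref{tm3} to the transformed equation~\eqref{kelvinvero1}, producing the four strictly increasing sequences and the classification of the solutions $\tilde u(s,\cdot)$, $\tilde v(s,\cdot)$: the ground states \sol R0f, the nodal \sol Rkf with the alternation $\tilde u(s,\pm\tilde A_{2j}^\pm)=\tilde v(s,\pm\tilde B_{2j}^\pm)$ and $\tilde u(s,\pm\tilde A_{2j+1}^\pm)=\tilde v(s,\mp\tilde B_{2j+1}^\mp)$, and the slow-decay families \sol R0s and \sol Rks. The last step is to transport this picture back through the dictionary of Subsections~\ref{SecKelvin} and~\ref{sec2.4}: \Rsol-solutions of~\eqref{kelvinvero1} correspond to \fdsol-solutions of~\eqref{hardy} and vice versa, while \Ssol-solutions correspond to \sdsol-solutions and vice versa; because $\tilde u(s)=u(1/s)s^{2-n}$ carries the positive factor $s^{2-n}$, the sign of the solution and the number and nondegeneracy of its zeros are preserved. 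Consequently every \sol Rkf is self-dual, whereas each \sol R0s (resp.\ \sol Rks) of the transformed problem becomes a \sol S0f (resp.\ \sol Skf) of the original one, which is precisely the content of Theorem~\ref{tm4}. Relabelling $(\tilde A_k^\pm,\tilde B_k^\pm)$ as $(A_k^\pm,B_k^\pm)$ and identifying $b_k^+$ with $\tilde a_k^+$, and using $d=\liro u(r)r^{\kappa(\eta)}=\lim_{s\to+\infty}\tilde v(s)s^{n-2-\kappa(\eta)}$ together with $L=\lim_{r\to+\infty}v(r)r^{n-2-\kappa(\beta)}=\lim_{s\to0}\tilde u(s)s^{n-2-\kappa(\beta)}$, one reads off the exact matching of the parameters $d,L$ between the two problems.

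The routine but delicate part will be the sign and zero bookkeeping, namely checking that the alternation $u(r,\pm A_{2j}^\pm)=v(r,\pm B_{2j}^\pm)$ versus $u(r,\pm A_{2j+1}^\pm)=v(r,\mp B_{2j+1}^\mp)$ survives the inversion: a solution with an odd number of zeros which is positive near $r=0$ is positive near $s=+\infty$, hence positive near $r=+\infty$ after passing back, so its fast-decay constant flips sign, exactly producing the $\mp$ in the odd case. Tracking this correctly amounts to following which branch $W^{u,\pm}$ or $W^{s,\pm}$ each trajectory sits on, and to noting that the possible loss of continuability is handled automatically, since a \Rsol-solution defined only on $[0,\varrho_d)$ inverts to a \fdsol-solution defined only for $s>1/\varrho_d$, consistently with the $\tilde W$-sets of Subsection~\ref{sec2.4}. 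The only genuine verification beyond this is the parameter matching of the first paragraph, i.e.\ that the $\eta\leftrightarrow\beta$ exchange and~\eqref{hardyinversion} place $\tilde l_s$ strictly inside $(2^*,{\rm I}(\eta))$; once that is secured, Theorem~\ref{tm3} applies verbatim and no further analytic work is required.
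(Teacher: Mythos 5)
Your proposal is correct and is essentially the paper's own argument: the paper likewise obtains Theorem~\ref{tm4} from Theorem~\ref{tm3} by Kelvin inversion, noting that $\Gspiu$/$\Gumeno$/$\Lu$ for $f$ correspond to $\Gupiu$/$\Gsmeno$/$\Ls$ for $\tilde f$ with the exponent ranges exchanged as in~\eqref{hardyinversion}, and that \Rsol-solutions are exchanged with \fdsol-solutions and \Ssol-solutions with \sdsol-solutions. Your write-up merely runs the duality in the reverse (logically equivalent) direction, starting from the hypotheses of Theorem~\ref{tm4}, and spells out the sign, zero-counting and parameter bookkeeping that the paper leaves implicit.
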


\section{Proofs.}\label{sec3}

\subsection{Preliminary lemmas.}\label{prellemmas}

  For every solution $\boldsymbol{x_l}(t)=(x_l(t),y_l(t))$ of~\eqref{si.hardy}, we
introduce polar coordinates
\begin{equation}\label{polcoord}
\rho_{l}= \| \boldsymbol{x_l} \| \,, \qquad \phi_{l}= \arctan(y_{l}/ x_{l})\,.
\end{equation}

\noindent
Taking into account~\eqref{transf1},
we stress that if we switch between different values of $l$, say $l_1$ and $l_2$, we get
$\rho_{l_2}=\textrm{exp}[(\alpha_{l_2}-\alpha_{l_1})t]\rho_{l_1}$ and $\phi_{l_1}(t)=\phi_{l_2}(t)$, so we can drop
the subscript in $\phi$.\\

In particular,
the next remark easily follows from the fact that the flow on
the $y$-axis rotates clockwise, i.e.,
\begin{equation}\label{flowyaxis}
\dot x_l(t)y_l(t)>0 \quad\text{ when } x_l(t)=0 \text{ and } y_l(t)\neq 0\,.
\end{equation}
Let us denote by $\Intera [x]$ the integer part of $x$.
\begin{remark}\label{ws}
Consider the trajectory of a solution $\boldsymbol{x_l}(t)$ of~\eqref{si.hardy};\\
 then
 $\Intera\left[\frac12 +\phi (t)/\pi \right]$ is decreasing in $t$.
\end{remark}

Let us denote by $\Theta^u(\tau)=\arctan(m^u(\tau))$ and by $\Theta^s(\tau)=\arctan(m^s(\tau))$, see \eqref{ellus}; we assume w.l.o.g. that
these functions are continuous,  and $\Theta^u(\tau) \to \Theta^u(-\infty)$ and $\Theta^s(\tau) \to \Theta^s(+\infty)$ as
$\tau \to -\infty$ and as $\tau \to +\infty$ respectively. Then we have the following.

\begin{lemma}\label{nongira}
Assume $\Hspiu$ with $K>0$ and $l_s \in (2^*, {\rm I}(\beta))$, then $m^s(\tau) < -\frac{n-2}{2}$ for any $\tau \in \RR$.
\end{lemma}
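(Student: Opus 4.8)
The plan is to reduce everything to a scalar Riccati equation for the slope $m^s(\tau)$ and then exploit a one-way barrier created by assumption $\textbf{H}$. Since the origin is a fixed point of~\eqref{si.hardy} and $g_{l_s}$ has vanishing linear part, the tangent lines to the invariant family $\{W^s_{l_s}(\tau)\}$ are transported by the linearized flow $\dot{\boldsymbol w}=\mathcal A_{l_s}(t)\boldsymbol w$. Writing a tangent vector as $(x,y)=x(1,m)$ and differentiating $m=y/x$ along $\dot x=\alpha_{l_s}x+y$, $\dot y=-h(\eu^t)x+\gamma_{l_s}y$, I would obtain
\[
\dot m=-m^2+(\gamma_{l_s}-\alpha_{l_s})m-h(\eu^t)=-m^2-(n-2)m-h(\eu^t),
\]
using $\gamma_{l_s}-\alpha_{l_s}=2-n$. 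Thus $t\mapsto m^s(t)$ is a solution of this ODE on $\RR$, and by Remark~\ref{differenze2} it satisfies $m^s(+\infty)=-(n-2-\kappa(\beta))$.

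First I would evaluate the right-hand side on the candidate barrier $m=-\frac{n-2}{2}$, where
\[
\dot m=\frac{(n-2)^2}{4}-h(\eu^t)>0\qquad\text{for every }t,
\]
the strict inequality coming precisely from $h(r)<\frac{(n-2)^2}{4}$ in $\textbf{H}$. Hence the horizontal line $\{m=-\frac{n-2}{2}\}$ can be crossed by solutions only upward as $t$ increases: if $m(t_0)>-\frac{n-2}{2}$ then $m(t)>-\frac{n-2}{2}$ for all $t\ge t_0$, since a first downward return to the line would force $\dot m\le0$ there, contradicting the strict positivity. A crude upper barrier ($\dot m<0$ for $m$ large, valid because $h(\eu^t)$ is bounded on $[t_0,+\infty)$ with finite limit $\beta$) prevents escape to $+\infty$, so such an $m(\cdot)$ stays finite and bounded below by $-\frac{n-2}{2}$ up to $t=+\infty$.

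I would then close by contradiction. By Remark~\ref{differenze2}, $\kappa(\beta)<\frac{n-2}{2}$, so $m^s(+\infty)=-(n-2-\kappa(\beta))<-\frac{n-2}{2}$. If instead $m^s(\tau_0)\ge-\frac{n-2}{2}$ for some $\tau_0$, then either $m^s(\tau_0)>-\frac{n-2}{2}$, or $m^s(\tau_0)=-\frac{n-2}{2}$ and the strict inequality $\dot m^s(\tau_0)>0$ pushes $m^s$ above the line immediately afterwards; in both cases the barrier property yields $m^s(t)>-\frac{n-2}{2}$ for all large $t$, whence $m^s(+\infty)\ge-\frac{n-2}{2}$, a contradiction. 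Therefore $m^s(\tau)<-\frac{n-2}{2}$ for every $\tau\in\RR$.

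The hard part is not the scalar barrier argument, which is routine once set up, but the justification that $m^s$ \emph{is} a genuine $C^1$ solution of the Riccati equation: namely that the tangent direction to the invariant stable manifold is transported by the linearization at the origin and that $m^s(\tau)$ is a well-defined finite slope for every $\tau$ (no vertical tangent in the relevant region). This relies on the construction and smoothness of $\ell^s(\tau)$ together with the exponential dichotomy facts recorded in Section~\ref{SecMH}, in particular Remark~\ref{differenze2}; granting those, the conclusion follows from the sign of $\dot m$ on the barrier and the limit value at $+\infty$.
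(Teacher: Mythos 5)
Your proof is correct, but it takes a genuinely different route from the paper's. The paper works with the full nonlinear flow of~\eqref{si.hardy}: using $\Gspiu$ with $K>0$ it derives the smallness estimate~\eqref{stimagg} for $g_{l_s}(x,t)/x$ near $x=0$, builds the triangle $\mathcal T(\tau)$ with vertices $(0,0)$, $(\delta(\tau),-\frac{n-2}{2}\delta(\tau))$, $(0,-\frac{n-2}{2}\delta(\tau))$, checks via~\eqref{z1}--\eqref{z2} that the flow crosses each edge in the right direction (it is on the edge $o(\tau)$ that $l_s>2^*$, i.e. $|\gamma_{l_s}|>\frac{n-2}{2}$, is used), and then invokes the Wazewski-type Lemmas~\ref{boxs} and~\ref{box} to place a connected arc of $W^{s}_{l_s}(\tau)$, joining the origin to $o(\tau)$, inside $\mathcal T(\tau)$; the slope bound on $\ell^s(\tau)$ follows from this containment. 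You instead work entirely at the level of the linearization: since $\ell^s(\tau)$ is the range of the dichotomy projection, it is carried by the linear flow, so its slope solves the Riccati equation $\dot m=-m^2-(n-2)m-h(\eu^t)$, and \textbf{H} turns $m=-\frac{n-2}{2}$ into a one-way (upward) barrier, incompatible with the limit $m^s(+\infty)=-(n-2-\kappa(\beta))<-\frac{n-2}{2}$ of Remark~\ref{differenze2}. Both arguments rest on the same dichotomy facts (tangency of $\ell^s(\tau)$ to the manifold, smooth dependence on $\tau$, the limit at $+\infty$), and both --- like the statement of the lemma and~\eqref{ellus} themselves --- presuppose that $m^s(\tau)$ is a finite slope for every $\tau$; note that your barrier actually also rules out a vertical tangent forward in time, since in the coordinate $\mu=1/m$ one has $\dot\mu=1>0$ at $\mu=0$, so a vertical direction would immediately re-enter the region $m>-\frac{n-2}{2}$ and run into the same contradiction. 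What your route buys: the nonlinearity $g_{l_s}$ never enters, so neither $K>0$ nor $l_s>2^*$ is actually needed --- $\Hsmeno$ (which already includes \textbf{H}) suffices --- and strictness of the inequality is automatic. What the paper's route buys: it localizes an actual arc of the stable manifold, not merely its tangent direction, inside $\mathcal T(\tau)$, and this triangle technique is recycled almost verbatim for the unstable manifold in Lemma~\ref{wu.hardy}.
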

\begin{proof}
From Remark~\ref{differenze2} we have $m^s(+\infty)= -(n-2-\kappa(\beta)) <
-\frac{n-2}{2}$ so that the conclusion easily follows from $\Gspiu$ for $\tau$ sufficiently large, say $\tau\geq T_2$.
Moreover by $\Gspiu$ we see that for any $\tau \in \RR$ we have
$g_{l_s}(x,t) \le  A(t) \, x \,\Delta(x)$
for any $t\geq \tau$,
where $\Delta(x)$ is a continuous increasing function such that $\Delta(0)=0$, and
$A(t)$ is a continuous function such that $\lit A(t)$ is positive and finite.
Therefore there is
 $\delta(\tau)>0$ such that
\begin{equation}
\label{stimagg}
\frac{g_{l_s}(x,t)}{x}< \frac{(n-2)^2}{4}- h(\eu^{t}) \quad \text{if } t\ge \tau
 \text{ and } |x| \le \delta(\tau)\,.
\end{equation}

Let us now consider the triangle $\mathcal T(\tau)$ having vertices $\boldsymbol{O}=(0,0)$,
$\boldsymbol{A}(\tau)=(\delta(\tau),- \frac{n-2}{2}\delta(\tau))$,
$\boldsymbol{B}(\tau)=(0,- \frac{n-2}{2}\delta(\tau))$,
and denote by $o(\tau),a(\tau),b(\tau)$ the edges opposite to $\boldsymbol{O}$,
$\boldsymbol{A}(\tau)$, $\boldsymbol{B}(\tau)$, without endpoints.

If $\boldsymbol{x_{l_s}}(t_o) \in b(\tau)$, for $t_o \ge \tau$, applying~\eqref{stimagg}
we find
\begin{equation}\label{z1}
\begin{split}
&\left. \frac{d}{dt} \left(   y_{l_s} + \frac{n-2}{2} x_{l_s} \right)\right|_{t=t_o} \\
& =
\frac{n-2}{2} x_{l_s}(t_o) \left( \alpha_{l_s}- \gamma_{l_s}- \frac{n-2}{2} \right) - h(\eu^{t_o}) x_{l_s}(t_o) -
g_{l_s}(x_{l_s}(t_o),t_o)  \\
& = x_{l_s} (t_o) \left[ \frac{(n-2)^2}{4}  - h(\eu^{t_o}) -
\frac{g_{l_s}(x_{l_s}(t_o),t_o)}{x_{l_s}(t_o)} \right]>0  \,.
\end{split}
\end{equation}
Thus the flow on $b(\tau)$ points towards the exterior of $\mathcal T(\tau)$,
 whenever $t \ge \tau$. Moreover by construction
the flow of~\eqref{si.hardy}  on $a(\tau)$ points towards the exterior of~$\mathcal T(\tau)$. Finally
observe that if $\boldsymbol{x_{l_s}}(t_o) \in o(\tau)$
for $t_o \ge \tau$, we have
\begin{equation}\label{z2}
\begin{split}
\left. \frac{d}{dt}    y_{l_s} \right|_{t=t_o}
&  \ge
\left[\frac{n-2}{2} |\gamma_{l_s}| -h(\eu^{t_o}) -
\frac{g_{l_s}(x_{l_s}(t_o),t_o)}{x_{l_s}(t_o)} \right] x_{l_s}(t_o)> 0
\end{split}
\end{equation}
where we have used $|\gamma_{l_s}|>\frac{n-2}{2}$, being $l_s>2^*$.

So we can apply Lemmas~\ref{boxs} and~\ref{box}, thus finding that there  is a connected
subset $\bar W^s(\tau)\subset T(\tau) \cap W^s_{l_s}(\tau)$ containing the
origin and a point in $o(\tau)$. It follows that locally $\ell^s(\tau) \subset T(\tau)$ too, therefore $m^s(\tau)<-\frac{n-2}{2}$.
\end{proof}

Assume that we are in the hypotheses of Theorem~\ref{tm3}.
	
From $\Hspiu$  the manifolds
 $W^s_{l_s}(\tau)$ exist for any $\tau \in (-\infty, +\infty]$. Moreover from
  $\Humeno$ we see that $W^u_{l_u}(\tau)$  exists for any  $t\in \RR$ and its tangent at the origin is $y=-m^u(\tau) x$
   where  $m^u(\tau) \to m^u(-\infty)=\arctan(-\kappa(\eta))$  as $\tau \to -\infty$, see Remark~\ref{differenze2}.

\medbreak

Let $\Sigma^{u,\pm}_{l_u}(\cdot,\tau) :  [0,+\infty) \to W^{u,\pm}_{l_u}(\tau)$ and
$\Sigma^{s, \pm}_{l_s}(\cdot, \tau): [0, +\infty)\to W^{s,\pm}_{l_s}(\tau)$
 be   smooth parameterizations respectively of $W^{u,\pm}_{l_u}(\tau)$ and
$W^{s,\pm}_{l_s}(\tau)$ such that $\Sigma^{u,\pm}_{l_u}(0,\tau)=(0,0)$ and $\Sigma^{s,\pm}_{l_s}(0, \tau)=(0,0)$.
We assume w.l.o.g. that the functions $\Sigma^{u,\pm}_{l_u}: [0, +\infty) \times (-\infty,+\infty) \to \RR^2$
and $\Sigma^{s,\pm}_{l_s}: [0, +\infty) \times (-\infty,+\infty) \to \RR^2$
are continuous in both the variables.

The  following result establishes a correspondence between trajectories of
 ~(\ref{si.hardy}) and solutions of~(\ref{hardy}).
\begin{lemma}\label{paramreg}
Assume the hypotheses of Theorem~\ref{tm3} and fix $T \in \RR$. Let $u(r,d(\omega))$ be the
 \Rsol-solution of~\eqref{hardy} corresponding to $\boldsymbol{x_{l_u}}(t,T,\Sigma^{u,+}_{l_u}(\omega,T))$.\\
Then, $d(\omega) \ge 0$ is a strictly increasing function such that $d(0)=0$.
Moreover, let $v(r,L(\sigma))$ be the \fdsol-solution of~\eqref{hardy} corresponding to $\boldsymbol{x_{l_s}}(t,T,\Sigma^{s,+}_{l_s}(\sigma,T))$. Then, $L(\sigma)$ is a strictly increasing function such that $L(0)=0$.
\end{lemma}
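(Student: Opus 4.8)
The plan is to identify the constant $d(\omega)$ explicitly through the Fowler transformation and then deduce strict monotonicity as a soft consequence of \emph{injectivity}, rather than by an explicit estimate. By \eqref{transf1} we have $u(r)=x_{l_u}(t)\eu^{-\alpha_{l_u}t}$ with $r=\eu^t$, so the \Rsol-solution attached to $\boldsymbol{x_{l_u}}(t,T,\Sigma^{u,+}_{l_u}(\omega,T))$ satisfies
\[
d(\omega)=\liro u(r)r^{\kappa(\eta)}=\lito x_{l_u}\big(t,T,\Sigma^{u,+}_{l_u}(\omega,T)\big)\,\eu^{-\lambda_2 t}\,,
\]
where $\lambda_2=\alpha_{l_u}-\kappa(\eta)>0$ is the unstable eigenvalue of the linearization. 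First I would dispose of the easy assertions. For $\omega=0$ the point $\Sigma^{u,+}_{l_u}(0,T)=(0,0)$ is the rest point, whose trajectory is the trivial solution, so $d(0)=0$; and since every trajectory of $W^{u,+}_{l_u}(T)$ corresponds to an \Rsol-solution that is positive for $r$ small (cf. the splitting of $W^u_{l_u}$ discussed after Lemma~\ref{corrispondenze}), we have $u(r)>0$ near $0$, whence $d(\omega)\ge0$ for all $\omega\ge0$.

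The crux is the injectivity of $\omega\mapsto d(\omega)$, and here I would use the uniqueness built into Remark~\ref{differenze1}: for each value there is exactly one \Rsol-solution with $\liro u(r)r^{\kappa(\eta)}$ equal to it. Thus if $d(\omega_1)=d(\omega_2)$, the two corresponding solutions coincide, so in particular their Fowler images agree at $t=T$, forcing $\Sigma^{u,+}_{l_u}(\omega_1,T)=\Sigma^{u,+}_{l_u}(\omega_2,T)$; since $\Sigma^{u,+}_{l_u}(\cdot,T)$ is an injective parameterization of the branch, $\omega_1=\omega_2$. Hence $d$ is injective.

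It remains to prove that $\omega\mapsto d(\omega)$ is continuous, after which the conclusion is automatic: a continuous injective function on the interval $[0,+\infty)$ is strictly monotone, and because $d(0)=0$ while $d\ge0$ it cannot be decreasing, so it is strictly increasing (in particular $d(\omega)>0$ for $\omega>0$). For continuity I would follow the trajectory back to a fixed time $\tau_0\ll0$, at which it lies in a small neighbourhood of the origin; there the saddle structure of the linearization and the exponential dichotomy used throughout Section~\ref{SecMH} yield the convergence $x_{l_u}(t)\eu^{-\lambda_2 t}\to d(\omega)$ \emph{uniformly} for $\omega$ in compact sets, while $\omega\mapsto x_{l_u}\big(\tau_0,T,\Sigma^{u,+}_{l_u}(\omega,T)\big)$ is continuous by continuous dependence on initial data together with the continuity of $\Sigma^{u,+}_{l_u}$. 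A uniform limit of continuous functions being continuous closes the point. I expect this uniform-convergence estimate to be the only genuinely technical step; the monotonicity itself is purely topological.

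The statement for $v(r,L(\sigma))$ is entirely parallel. One replaces $W^{u,+}_{l_u}$ by $W^{s,+}_{l_s}$ and reads, again from \eqref{transf1}, $L(\sigma)=\lir v(r)r^{n-2-\kappa(\beta)}=\lit x_{l_s}(t,T,\Sigma^{s,+}_{l_s}(\sigma,T))\eu^{-\lambda_1 t}$ as the amplitude along the stable eigendirection $\lambda_1=\gamma_{l_s}+\kappa(\beta)<0$; positivity of $v$ for $r$ large on $W^{s,+}_{l_s}$ gives $L(\sigma)\ge0=L(0)$, and the uniqueness of \fdsol-solutions with prescribed $L$ from Remark~\ref{differenze1} gives injectivity. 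The same ``continuous $+$ injective $\Rightarrow$ strictly increasing'' argument then applies; alternatively the whole second half follows from the first by Kelvin inversion \eqref{kelvinvero}, which exchanges \Rsol- and \fdsol-solutions.
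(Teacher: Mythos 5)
Your identification $d(\omega)=\lito x_{l_u}\big(t,T,\Sigma^{u,+}_{l_u}(\omega,T)\big)\eu^{-[\alpha_{l_u}-\kappa(\eta)]t}$ and the reduction ``continuous $+$ injective $\Rightarrow$ strictly monotone'' are fine as far as they go, but the pivotal step of your proof has a genuine gap: you obtain injectivity by citing the uniqueness assertion of Remark~\ref{differenze1}, and that assertion is precisely the content which Lemma~\ref{paramreg} is meant to supply. What the paper actually proves in the Appendix (inside the proof of Lemma~\ref{corrispondenze.bis}, via exponential dichotomy) is only that every $\Q\in W^u_{l_u}(\tau)$ produces a limit $d(\Q)$ with $\lito \boldsymbol{x_{l_u}}(t,\tau,\Q)\,\eu^{-[\alpha_{l_u}-\kappa(\eta)]t}=d(\Q)\,(1,-\kappa(\eta))$, and that points off $\tilde W^u_{l_u}(\tau)$ do not yield \Rsol-solutions; nowhere is it shown that distinct points of the leaf give distinct values of $d$. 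In the paper's logical structure, that injectivity --- and hence the ``unique solution $u(r,d)$'' language of Remark~\ref{differenze1} --- is what Lemma~\ref{paramreg} (combined with Lemma~\ref{corrispondenze.bis}) establishes. So your argument is circular as written, or else it rests on an independent uniqueness proof (e.g.\ a dichotomy/Gronwall argument: if $d(\omega_1)=d(\omega_2)$ the difference of the two solutions is $o(\eu^{[\alpha_{l_u}-\kappa(\eta)]t})$ as $t\to-\infty$, and the dichotomy together with $\partial_x g_{l_u}(0,t)=0$ forces it to vanish), which is work of the same nature and depth as the paper's own proof and which you do not supply. The same objection applies verbatim to the $L(\sigma)$ half, and also to deducing it by Kelvin inversion, since the inverted statement inherits the gap. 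In addition, your second ingredient --- continuity of $d(\omega)$ via uniform convergence on compact sets of $\omega$ --- is asserted rather than proved; so both nontrivial ingredients of the soft argument are outsourced, and only the trivial parts (signs and topology) are carried out.

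For comparison, the paper's proof needs neither uniqueness nor continuity of $d$. By Lemma~\ref{loc}, for $\omega_1<\omega_2$ and all $\tau\le N(\omega_2)$ the points $\Sigma^{u,+}_{l_u}(\omega_i,\tau)$ lie in $W^{u,+}_{l_u,loc}(\tau)$, which is a graph over $\ell^u(\tau)$ and hence (shrinking $\delta$, since $\ell^u(\tau)$ is never orthogonal to the $x$-axis) over a segment of the $x$-axis; this orders the $x$-coordinates, $x_{l_u}(N(\omega_2);T,\Sigma^{u,+}_{l_u}(\omega_1,T))<x_{l_u}(N(\omega_2);T,\Sigma^{u,+}_{l_u}(\omega_2,T))$. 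If one had $d(\omega_1)>d(\omega_2)$, the asymptotics $x_{l_u}(t)\eu^{-[\alpha_{l_u}-\kappa(\eta)]t}\to d$ would reverse this ordering for $t\ll 0$, so at a first time $\tilde\tau\le N(\omega_2)$ two distinct points of $W^{u,+}_{l_u,loc}(\tilde\tau)$ would share the same $x$-coordinate, contradicting the graph property; this comparison argument gives strict monotonicity directly. (The remainder of the paper's proof is devoted to dropping \textbf{C} and making the parametrization jointly continuous in $(d,\tau)$; for the lemma at a fixed $T$ your backward-in-time setting legitimately sidesteps that part.) If you want to keep your soft strategy, you must first prove the dichotomy-based uniqueness statement above and then also prove, not assert, the continuity of $d$; as it stands, the proposal does not constitute a proof.
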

The proof is postponed to Appendix~\ref{app2}, however  see   ~\cite[Lemma 2.10]{DF}
  for the simpler case where  $h(r) \equiv 0$ and \textbf{C} is assumed.

Lemma \ref{paramreg} permits us to introduce an additional parametrization on our manifolds depending on the parameters $d$ and $L$.

\begin{remark}\label{Ups}
For every $\tau\in \RR$, we can parametrize $W^{u,+}_{l_u}(\tau)$ directly with $d$ and $W^{s,+}_{l_s}(\tau)$ with $L$.
   In this way we can introduce the new parametrizations
$\Upsilon^{u,+}_{l_u}(\cdot,\tau): [0, d_{\tau}^+) \to W^{u,+}_{l_u}(\tau)$
and $\Upsilon^{s,+}_{l_s}(\cdot,\tau): [0, L_\tau^+) \to W^{s,+}_{l_s}(\tau)$,
which are continuous in both the variables; here  $d_\tau^+,L_\tau^+ \in (0,+\infty]$  have been defined in~\eqref{eqrhod}. However in this case $\Upsilon^{u,+}_{l_u}$ cannot be extended
continuously to $\tau=-\infty$ and $\Upsilon^{s,+}_{l_s}$ cannot be extended to $\tau=+\infty$, since
 $\Upsilon^{u,+}_{l_u}(d,\tau) \to (0,0)$ as $\tau \to -\infty$ and $\Upsilon^{s,+}_{l_s}(L,\tau) \to (0,0)$ as $\tau \to +\infty$, for any $d \in (d_{\tau}^-,d_{\tau}^+)$
 and $L \in (L_{\tau}^-,L_{\tau}^+)$.

We underline that, in general, we do not have $d_\tau^+=+\infty$ or $L_\tau^+=+\infty$. E.g., by Remark~\ref{continuabilita}, once fixed $\tau\in\RR$, if $\varrho_d<\eu^\tau$ for a certain $d>0$ then $d_{\tau}^+\leq d$.
An analogous statement holds for  $W^{u,-}_{l_u}(\tau)$ and $W^{s,-}_{l_s}(\tau)$.
\end{remark}
\begin{remark}\label{prol0}
Assumption $\Lu$ guarantees both forward and backward continuability of the trajectories of
\eqref{si.hardy} for $t \ge \mathfrak T$. Therefore $d_{\tau}^+=d^+_{ \mathfrak T}$, and $L^{\pm}_{\tau}=\pm \infty$
for any $\tau \ge  \mathfrak T$.
\end{remark}

\medbreak

Using~(\ref{cambioL})  we can define for $W^{s,\pm}_{l_u}(\tau)$ the continuous parameterizations
 $\Sigma^{s,\pm}_{l_u}(\omega,\tau)=\Sigma^{s,\pm}_{l_s}(\omega,\tau) \eu^{(\alpha_{l_u}-\alpha_{l_s}) \tau}$. It is straightforward to check that the property explained in Lemma~\ref{paramreg} holds for $\Sigma^{s,\pm}_{l_u}(\omega,\tau)$, too.

 We need to consider also the following parametrizations in polar coordinates of the manifolds:
\begin{equation}\label{sigmaparam}
\begin{array}{ll}
\Sigma^{u,\pm}_{l}(\omega, \tau)& \!\!\!\!=  \rho^{u,\pm}(\omega, \tau;l) \big( \cos( \theta^{u,\pm}(\omega, \tau)), \sin( \theta^{u,\pm}(\omega, \tau))\big)\,,\\
\Sigma^{s,\pm}_{l}(\sigma, \tau)& \!\!\!\!=  \rho^{s,\pm}(\sigma, \tau;l) \big( \cos( \theta^{s,\pm}(\sigma, \tau)), \sin( \theta^{s,\pm}(\sigma, \tau))\big) \,;
\end{array}
\end{equation}
 \begin{equation}\label{upsilonparam}
\begin{array}{ll}
\Upsilon^{u,\pm}_{l}(d, \tau)& \!\!\!\!=  R^{u,\pm}(d, \tau;l) \big( \cos( \Theta^{u,\pm}(d, \tau)), \sin( \Theta^{u,\pm}(d, \tau))\big)\,,\\
\Upsilon^{s,\pm}_{l}(L, \tau)& \!\!\!\!=  R^{s,\pm}(L, \tau;l) \big( \cos( \Theta^{s,\pm}(L, \tau)), \sin( \Theta^{s,\pm}(L, \tau))\big) \,.
\end{array}
\end{equation}
We recall that the angular coordinate of the parametrizations does not depend on the choice of $l$ as stated in Lemma~\ref{corrispondenze}.

Let $\Q\in W^{s,+}_{l}(\tau)$
  and consider the trajectory $\boldsymbol{x_{l}}(t,\tau,\Q)$
   of~\eqref{si.hardy}. Note that
  $\frac{\boldsymbol{x_{l}}(t,\tau,\Q)}{|\boldsymbol{x_{l}}(t,\tau,\Q)|}$ approaches
  $\ell^s(+\infty)$ as $t \to +\infty$,
   see~\eqref{ellus}.
  Using the polar coordinates introduced in~\eqref{polcoord}, let us set
\begin{equation}\label{traj}
\begin{array}{cc}
 \boldsymbol{x_{l}}(t,\tau,\Q)= \rho_{l}(t,\tau,\Q) \big(\cos(\phi(t,\tau,\Q),
 \sin(\phi(t,\tau,\Q) \big) \,,
\end{array}
 \end{equation}
 where we can assume that the angular coordinate $\phi$
   satisfies
   $\lit \phi(t,\tau,\Q)=-\arctan (n-2-\kappa(\beta))$.
   Similarly, if we consider $\R \in W^{u,+}_{l}(\tau)$ with the trajectory $\boldsymbol{x_{l}}(t,\tau,\R)$, then
   $\frac{\boldsymbol{x_{l}}(t,\tau,\R)}{|\boldsymbol{x_{l}}(t,\tau,\R)|}$
  approaches $\ell^u(-\infty)$ as $t \to -\infty$, and we can assume $\lito \phi(t,\tau,\R)=-\arctan(\kappa(\eta))$.
   However, if $\Q \in W^{u,+}_{l}(\tau) \cap W^{s,+}_{l}(\tau)$ for a certain $l$, we must choose one of the two conditions, the other will be satisfied up to a multiple of $2\pi$.

\subsection{Proof of the main theorems.}\label{secprooftm3}

In this section we provide the proof of Theorem~\ref{tm3}. The proof is based on some geometrical observations on the phase portrait. Then Theorem~\ref{tm4} follows from Kelvin inversion.

    We recall that the manifolds $W^{s,+}_{l_s}(\tau)$ are sets of initial conditions converging to the origin and
 a priori they are not graphs of solutions unless the system is autonomous. However, for system~\eqref{si.hardy} the number of rotations
 around the origin performed
  by $W^{s,+}_{l_s}(\tau)$
  from the origin until a point $\Q \in W^{s,+}_{l_s}(\tau)$, equals the number of rotations
 performed by the trajectory $\boldsymbol{x_{l_s}}(t,\tau,\Q)$
  for $t \ge \tau$, with reversed sign.

  More precisely
  we have the following property, the proof is adapted from  ~\cite[Propositions 3.5, 3.8]{DF}
  (we refer also to~\cite{BDF,Fdie,JK} for more details) and it is postponed to Appendix~\ref{app2}.

\begin{lemma}\label{thetaphiS}
 Let us consider system~\eqref{si.hardy} and assume  $\Hspiu$ with $K>0$ and $l_s \in (2^*,{\rm  I}(\beta))$.
   Consider the trajectory in~\eqref{traj} with $\Q = \Sigma^{s,+}_{l_s}(\sigma, \tau)$, using the notation    in~\eqref{sigmaparam}. Then
 if $h$ is a constant,  the angle   $\theta:=\theta^{s,+}(\sigma,\tau)-\theta^{s,+}(0,\tau)$ performed by
the  stable manifold $W_{l_s}^{s,+}(\tau)$
equals, but with reversed sign, the angle $\phi:=\phi(+\infty,\tau,\Q)-\phi(\tau,\tau,\Q)$
performed by the trajectory $\boldsymbol{x_{l_s}}(t,\tau,\Q)$. If
 $h$ is a function, the difference is
 \begin{equation}\label{angle}
 \begin{aligned}
| \theta-(-\phi)| &= |-\theta^{s,+}(0,\tau)+ \theta^{s,+}(0,+\infty) |\\
& = | \arctan(m^s(\tau)) -\arctan(n-2-\kappa(\beta)) | \le  \pi \, .
 \end{aligned}
 \end{equation}
\end{lemma}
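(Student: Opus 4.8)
The plan is to read off the two angles $\theta$ and $\phi$ from a single orbit, exploiting that the flow of~\eqref{si.hardy} carries the stable manifold at time $\tau$ onto the stable manifold at any later time. First I would record that, since $\boldsymbol{x_{l_s}}(t,\tau,\cdot)$ maps the connected branch $W^{s,+}_{l_s}(\tau)$ onto $W^{s,+}_{l_s}(t)$, fixing the origin and preserving the order along this immersed $1$-manifold, the orbit issuing from $\Q=\Sigma^{s,+}_{l_s}(\sigma,\tau)$ can be written as $\boldsymbol{x_{l_s}}(t,\tau,\Q)=\Sigma^{s,+}_{l_s}(\hat\sigma(t),t)$ with $\hat\sigma(\tau)=\sigma$ and $\hat\sigma(t)\downarrow 0$ as $t\to+\infty$ (the orbit returns to the origin along the positive branch). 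Consequently the angular coordinate of the orbit coincides with that of the manifold, $\phi(t,\tau,\Q)=\theta^{s,+}(\hat\sigma(t),t)$, in the notation of~\eqref{sigmaparam} and~\eqref{traj}.

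From this identification the claimed formula is almost immediate. Evaluating at $t=\tau$ gives $\phi(\tau,\tau,\Q)=\theta^{s,+}(\sigma,\tau)$, while letting $t\to+\infty$ and using $\hat\sigma(t)\to0$ together with Remark~\ref{differenze2} gives $\phi(+\infty,\tau,\Q)=\theta^{s,+}(0,+\infty)=\arctan(m^s(+\infty))=-\arctan(n-2-\kappa(\beta))$, consistent with the normalization fixed after~\eqref{traj}. Subtracting yields $\phi=\theta^{s,+}(0,+\infty)-\theta^{s,+}(\sigma,\tau)$, and adding $\theta=\theta^{s,+}(\sigma,\tau)-\theta^{s,+}(0,\tau)$ produces $\theta-(-\phi)=\theta^{s,+}(0,+\infty)-\theta^{s,+}(0,\tau)$; substituting the boundary slopes from~\eqref{ellus} and Remark~\ref{differenze2} gives the closed form on the right-hand side. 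When $h$ is constant the matrix $\mathcal{A}_{l_s}(t)$ is constant, so $\ell^s(\tau)$ is $\tau$-independent, $\theta^{s,+}(0,+\infty)=\theta^{s,+}(0,\tau)$, and the identity collapses to $\theta=-\phi$.

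The step I expect to be the real obstacle is not this bookkeeping, but the matching of the \emph{continuous} representatives of the two angles, i.e. ruling out a spurious shift by a multiple of $2\pi$ between $\phi(t,\tau,\Q)$ and $\theta^{s,+}(\hat\sigma(t),t)$. I would handle this through the monotone rotation of the flow: by~\eqref{flowyaxis} and Remark~\ref{ws} the orbit crosses the $y$-axis only clockwise, so the integer counting its half-revolutions is monotone, and the same count governs the crossings of the branch $W^{s,+}_{l_s}(\tau)$ as its parameter runs from $0$ to $\sigma$; since the two counts agree at $\sigma=0$ they agree throughout, which fixes the representatives. Equivalently, one may argue by a winding/homotopy computation: restricting to $\sigma\in[\varepsilon,\bar\sigma]$ and $t\in[\tau,T]$, the map $(\sigma,t)\mapsto\boldsymbol{x_{l_s}}(t,\tau,\Sigma^{s,+}_{l_s}(\sigma,\tau))$ avoids the origin on the whole solid rectangle (nonzero orbits never reach $(0,0)$ in finite time), so its boundary sweeps total angle zero; letting $\varepsilon\to0$ and $T\to+\infty$, the side $t=T$ contributes nothing because all orbits become tangent to $\ell^s(+\infty)$, while the side $\sigma=\varepsilon$ contributes exactly the tangent drift $\theta^{s,+}(0,\tau)-\theta^{s,+}(0,+\infty)$ of the orbits that hug the origin along the moving line $\ell^s(t)$, and the identity drops out.

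Finally, the bound $|\theta-(-\phi)|\le\pi$ follows from Lemma~\ref{nongira}: there $m^s(\tau)<-\tfrac{n-2}{2}<0$ for every $\tau$, and $m^s(+\infty)=-(n-2-\kappa(\beta))<0$ as well, so both tangent angles $\arctan(m^s(\cdot))$ lie in $(-\tfrac\pi2,0)$ and their difference has absolute value strictly below $\pi$. This same sign information certifies that $W^{s,+}_{l_s}(\tau)$ stays in a fixed sector below the line $y=-\tfrac{n-2}{2}x$ and performs no full revolution, which is precisely what makes the representative matching of the previous paragraph exact. The technically delicate inputs behind all of this are the uniform tangency of $W^{s,+}_{l_s}(t)$ to $\ell^s(t)$ near the origin and the uniform asymptotics of orbits as $t\to+\infty$; both rest on the exponential dichotomy and invariant-manifold estimates collected in the Appendix under $\Gspiu$, and I would invoke them rather than reprove them here.
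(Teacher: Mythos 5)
Your proposal is correct, and its essential mechanism is the same as the paper's proof in Appendix~B: there too the discrepancy $\theta-(-\phi)$ is identified with the drift of the tangent line $\ell^s(t)$ at the origin, and the $2\pi$-ambiguity is resolved by a homotopy argument. Concretely, the paper builds, for each $T$, a curve that follows the anchor point $\boldsymbol{\bar{C}}(T)\in W^{s,+}_{l_s,loc}(T)\cap\{x=\delta\}$, then the manifold at time $T$, then the orbit, and uses invariance of the linking number in the augmented three-dimensional system~\eqref{si.naas} to obtain~\eqref{sommaangoli}, which is exactly your identity $\theta+\phi=\theta^{s,+}(0,+\infty)-\theta^{s,+}(0,\tau)$; your rectangle-winding computation is the planar rendering of that homotopy, with your ``$\sigma=\varepsilon$ side'' playing the role of the paper's curve $\boldsymbol{\Psi_1}(T)=(\boldsymbol{\bar{C}}(T),\zeta(T))$, and it likewise rests on the uniform tangency and dichotomy estimates of the Appendix.

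One caveat: of the two devices you offer for fixing the continuous representatives, only the second (the winding/homotopy computation) is sound, and you should rely on it alone. The first --- matching counts of $y$-axis crossings via~\eqref{flowyaxis} and Remark~\ref{ws} --- does not work as stated: Remark~\ref{ws} concerns the rotation of an \emph{orbit} as $t$ varies, not of the static curve $W^{s,+}_{l_s}(\tau)$ traversed in its parameter $\sigma$, and at this stage of the argument nothing prevents the manifold at fixed time from crossing the $y$-axis non-monotonically (its monotone spiralling is the content of Lemma~\ref{spiral}, whose proof uses the present lemma, so invoking it here would be circular). Relatedly, your closing claim that $W^{s,+}_{l_s}(\tau)$ ``stays in a fixed sector below $y=-\frac{n-2}{2}x$ and performs no full revolution'' is true only of the local branch near the origin, i.e.\ of the tangent direction controlled by Lemma~\ref{nongira}; the full branch does spiral. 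Neither remark affects the validity of your winding argument, which, as in the paper, needs only the uniform tangency of $W^{s,+}_{l_s,loc}(t)$ to $\ell^s(t)$ and the confinement of the angles $\arctan(m^s(\cdot))$ to $\bigl(-\frac{\pi}{2},0\bigr)$ provided by Lemma~\ref{nongira}.
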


We wish to underline that a similar statement can be obtained for the unstable manifold too.
Moreover, notice that Lemma~\ref{thetaphiS} is
    independent from the parametrization
 of the stable manifold, so we can use the one defined in~\eqref{upsilonparam}.

\begin{lemma}\label{spiral}
Assume $\Hspiu$ with $K>0$, $l_s \in (2^*, {\rm I}(\beta))$ and $\Lu$, then   $W^{s,+}_{l_s}(\tau)$ and $W^{s,-}_{l_s}(\tau)$ are   spirals
 rotating indefinitely  counterclockwise around the origin
for any $\tau \ge \mathfrak{T}$;
therefore $\lim_{\sigma \to +\infty} \theta^{s,+}(\sigma,\tau)=+\infty$, and
$\lim_{\sigma \to +\infty} \theta^{s,-}(\sigma,\tau)=+\infty$ for any $\tau \ge  \mathfrak{T}$.
\end{lemma}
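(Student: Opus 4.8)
The plan is to translate the geometric claim about the winding of the manifold into a purely dynamical statement about the clockwise rotation of the trajectories lying on it, the rotation being produced by the superlinear restoring term furnished by $\Gspiu$ and $\Lu$. First I would record that, by $\Lu$ and Remark~\ref{prol0}, for every $\tau\ge\mathfrak T$ all trajectories of~\eqref{si.hardy} are continuable for $t\ge\tau$ and $L^+_\tau=+\infty$; hence, through the parametrization $\Upsilon^{s,+}_{l_s}(\cdot,\tau)\colon[0,+\infty)\to W^{s,+}_{l_s}(\tau)$ of Remark~\ref{Ups}, the branch $W^{s,+}_{l_s}(\tau)$ is an unbounded immersed curve issuing from the origin. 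In particular there is a sequence $\sigma_n\to+\infty$ with $\rho^{s,+}(\sigma_n,\tau;l_s)=\|\Sigma^{s,+}_{l_s}(\sigma_n,\tau)\|\to+\infty$. I stress that here the rotation must be read off along the \emph{time} evolution of each trajectory, so that the argument is not circular: the spatial winding of the curve is only deduced \emph{afterwards}, through Lemma~\ref{thetaphiS}.

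The core step is a rotation estimate at infinity. Writing $\Q_n=\Sigma^{s,+}_{l_s}(\sigma_n,\tau)$, I claim that the total (clockwise) angle
\[
\phi_n:=\phi(+\infty,\tau,\Q_n)-\phi(\tau,\tau,\Q_n)
\]
swept in time by $\boldsymbol{x_{l_s}}(\cdot,\tau,\Q_n)$ satisfies $\phi_n\to-\infty$ as $n\to+\infty$. To obtain this I would show that, for $t\ge\tau\ge\mathfrak T$, the flow is genuinely rotational at large radius: on the $y$-axis one has $\dot x_{l_s}\,y_{l_s}>0$, i.e. clockwise crossing by~\eqref{flowyaxis}, while the second condition in $\Lu$ together with $\Gspiu$ (which gives $\lim_{|x|\to\infty}g^{+\infty}_{l_s}(x)/x=+\infty$) forces $\dot y_{l_s}<0$ on the far part of the positive $x$-axis and $\dot y_{l_s}>0$ on the far part of the negative $x$-axis. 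Thus the trajectory cannot linger near the $x$-axis and must cross the four half-axes cyclically while its radius stays large; adapting the barrier construction of Lemma~\ref{nongira} (a triangular sector on which the flow points outward), one checks that the number of half-turns completed while $\rho_{l_s}\ge\bar\rho$ diverges as $\|\Q_n\|\to+\infty$, before the orbit finally enters the origin along $\ell^s(+\infty)$ in the direction $-\arctan(n-2-\kappa(\beta))$. This is the step I expect to be the main obstacle, since the estimate must be made uniform in $t\ge\mathfrak T$ and robust against the non-autonomous Hardy perturbation $h(\eu^{t})$; it is precisely here that Lemma~\ref{nongira}, confining $\ell^s(\tau)$ below the line $y=-\tfrac{n-2}{2}x$, and the monotonicity of Remark~\ref{ws} prevent the accumulated rotation from being cancelled and rule out an escape to infinity inside a bounded sector.

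Finally I would close the argument through Lemma~\ref{thetaphiS}: since $\theta^{s,+}(\sigma,\tau)-\theta^{s,+}(0,\tau)=-\phi+O(\pi)$ with the error controlled by~\eqref{angle}, the divergence $\phi_n\to-\infty$ yields $\theta^{s,+}(\sigma_n,\tau)\to+\infty$. Because $\theta^{s,+}(\cdot,\tau)$ inherits from~\eqref{flowyaxis} and Remark~\ref{ws} an essentially monotone counterclockwise behaviour, the whole limit $\lim_{\sigma\to+\infty}\theta^{s,+}(\sigma,\tau)=+\infty$ follows, so that $W^{s,+}_{l_s}(\tau)$ is a spiral winding indefinitely counterclockwise around the origin for every $\tau\ge\mathfrak T$. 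The same reasoning, using the odd sign structure of $g_{l_s}$ under $x\mapsto-x$ guaranteed by $\Gspiu$, applies verbatim to $W^{s,-}_{l_s}(\tau)$ and gives $\lim_{\sigma\to+\infty}\theta^{s,-}(\sigma,\tau)=+\infty$, completing the proof.
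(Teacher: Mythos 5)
Your proposal breaks down at the step you yourself flag as the main obstacle, and the difficulty is not merely technical: the rotation mechanism you invoke is unavailable under the stated hypotheses. You want, for each \emph{fixed} $\tau\ge\mathfrak T$, the trajectories through far-away points $\Q_n\in W^{s,+}_{l_s}(\tau)$ to sweep an unbounded clockwise angle, the rotation being forced by superlinearity at large radius. But at finite times there is no superlinearity at large $|x|$: the second condition in $\Lu$ only gives $\liminf_{|x|\to+\infty}g_{l_s}(x,t)/x\ge 0$ for $t>\mathfrak T$ (a continuability hypothesis, cf. Remark~\ref{prol0}, not an oscillation hypothesis), and $\Gspiu$ gives $g_{l_s}(x,t)\to K g^{+\infty}_{l_s}(x)$ only as $t\to+\infty$ and only uniformly on \emph{compact} $x$-sets. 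Hence for $t$ in a bounded interval $[\mathfrak T,T_0]$ the hypotheses allow $g_{l_s}(x,t)/x\to 0$ as $|x|\to\infty$; the far-field dynamics is then governed by the linear part $\mathcal{A}_{l_s}(t)$, which is saddle-like (indeed $\alpha_{l_s}\gamma_{l_s}+h<0$ is exactly the standing assumption), so no crossing of the half-axes is forced: your claim that $\dot y_{l_s}<0$ on the far positive $x$-axis already fails when $h(\eu^t)<0$ and $g_{l_s}$ is small there, and even when $h>0$ a sign condition on one axis does not produce complete half-turns against a saddle drift; for that one needs $g_{l_s}(x,t)/x\to+\infty$, which simply is not assumed for finite $t$. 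There is a second, independent gap at the very start: $L^+_\tau=+\infty$ says that the parameter interval of $\Upsilon^{s,+}_{l_s}(\cdot,\tau)$ is all of $[0,+\infty)$, not that the curve is unbounded in the plane, so the existence of $\sigma_n$ with $\rho^{s,+}(\sigma_n,\tau;l_s)\to+\infty$ is unjustified (and, in fact, never needed: the lemma asserts divergence of the winding angle, not of the radius).

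The paper's proof creates the rotation in a different place: not at spatial infinity for fixed $\tau$, but in the far future, where $\Gspiu$ makes the system autonomous and genuinely superlinear. Concretely, $W^{s,+}_{l_s}(+\infty)$ coincides with the stable manifold of the autonomous limit system, which is a known infinite spiral; by Remark~\ref{allinfinito} any compact piece of it (hence any prescribed number $M$ of turns) persists for $W^{s,+}_{l_s}(\tau)$ with $\tau\ge T$, $T$ large. One then takes the point $\boldsymbol{Q_M}\in W^{s,+}_{l_s}(T)$ with $\Theta^{s,+}(L_M,T)=2\pi M+\pi/2$, converts the winding of the manifold into forward rotation of the trajectory through $\boldsymbol{Q_M}$ via Lemma~\ref{thetaphiS}, transports this rotation \emph{backward in time} from $T$ down to every $\tau\in[\mathfrak T,T]$ using Remark~\ref{ws} (the flow crosses the $y$-axis only clockwise, so the angle cannot decrease going backward) together with the backward continuability granted by $\Lu$, and finally converts back into the winding of $W^{s,+}_{l_s}(\tau)$ by Lemma~\ref{thetaphiS}, the correction term having the right sign and size thanks to Lemma~\ref{nongira}. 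So the ingredients you list (Lemmas~\ref{nongira} and~\ref{thetaphiS}, Remark~\ref{ws}, \eqref{flowyaxis}) are indeed the right ones, but they must be deployed to \emph{propagate} rotation backward from the quasi-autonomous regime $t\ge T$, rather than to \emph{generate} it at large radius at time $\tau$, where the hypotheses give no control.
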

\begin{proof}
We prove the Lemma just for $W^{s,+}_{l_s}(\tau)$; the case of $W^{s,-}_{l_s}(\tau)$ can be obtained analogously.
The Lemma is known if the system is autonomous, so we have it trivially for $\tau=+\infty$.
In fact the manifold
$W^{s,+}_{l_s}(+\infty)$ coincides with the stable manifold $M^{s,+}$ of the autonomous system~(\ref{si.hardy})
where $g_{l_s}(x,t) \equiv K g_{l_s}^{+\infty}(x)$. We recall that from $\Lu$ we get continuability of the solutions for any $t \ge \mathfrak{T}$.
 From Remark~\ref{allinfinito}, we see that for any integer $M>0$ there is $T>0$ such that  $W^{s,+}_{l_s}(\tau)$ crosses transversally
the coordinate axes, and performs at least $M$ complete rotations
  counterclockwise, for any $\tau \geq T$. In particular, using~\eqref{upsilonparam}, there exists $L_{M}$ such that $\Theta^{s,+}(L_{M},T)=2\pi {M}+\pi/2$.
Call $\boldsymbol{Q_{M}}=\Upsilon^{s,+}_{l_s}(L_{M},T)$ which belongs to the positive $y$-semiaxis. In particular $\Upsilon^{s,+}_{l_s}([0,L_{\bar{M}}]\times\{T\})$ performs more than $M$
   complete rotations.

 Let $\phi(t,T,\boldsymbol{Q_{{M}}})$ be the angular coordinate of $\boldsymbol{x_{l_s}}(t,T,\boldsymbol{Q_{{M}}})$, see~\eqref{traj}, then from Lemma
\ref{thetaphiS} we see that $\phi(T,T,\boldsymbol{Q_{ M}})=\Theta^{s,+}(L_{ M},T)=2\pi {M}+\pi/2$.
By~\eqref{flowyaxis} and Remark~\ref{ws}, we have,  $\phi(\tau,T,\boldsymbol{Q_{\bar M}}) \ge 2\pi M+\pi/2$
for any $\tau \le T$, as long as $\phi(\tau,T,\boldsymbol{Q_{\bar M}})$ exists, therefore at least for $\tau \in [\mathfrak{T}, T]$. Then using again Lemma
\ref{thetaphiS}  and Lemma~\ref{nongira}, we see that
$$
\Theta^{s,+}(L_{M},\tau)- \Theta^{s,+}(0,\tau)\geq 2\pi M+\frac{\pi}{2}-\arctan(m^s(\tau)) \ge
\left(2+\frac{1}{2} \right)\pi M \,,
$$
 thus obtaining that $\Upsilon^{s,+}_{l_s}([0,L_{M}]\times\{\tau\})$ draws more than $M$ rotations for every~$\tau \ge \mathfrak{T}$. We have thus proved the Lemma, since $M$ is arbitrarily large.
\end{proof}

\begin{lemma}\label{wu.hardy}
Assume $\Humeno$ and $\Lu$;
then
 $\limsup_{\omega \to +\infty} \rho^{u,\pm}(\omega,\tau;l_u) =+\infty$, and
$$\ds  \begin{array}{ccccc}
-\arctan\left(\frac{n-2}{2} \right)&<& \theta^{u,+}(\omega,\tau) &<&\pi/2 \\[2mm]
-\pi-\arctan\left(\frac{n-2}{2} \right)&<& \theta^{u,-}(\omega,\tau) &<& -\pi/2
\end{array} $$
 for every $\omega>0$ and for any $\tau \le \mathfrak{T}$.
\end{lemma}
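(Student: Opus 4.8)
The plan is to confine each branch $W^{u,\pm}_{l_u}(\tau)$ to an open angular sector by an invariant‑region argument in the spirit of Lemma~\ref{nongira}, and then to deduce the unboundedness from the lack of continuability produced by $\Lu$. For $W^{u,+}_{l_u}(\tau)$ the target sector is $S:=\{x>0,\ y+\frac{n-2}{2}x>0\}$, which is exactly $\{-\arctan(\frac{n-2}{2})<\phi<\pi/2\}$. First I would check that the tangent line $\ell^u(\tau)$ lies inside $S$ for every $\tau\le\mathfrak T$. By $\Gumeno$ the nonlinearity does not affect the linearization at the origin, so the slope $m^u(\tau)$ solves the Riccati equation $\dot m=-(m^2+(n-2)m+h(\eu^t))$, and by Remark~\ref{differenze2} it satisfies $m^u(-\infty)=-\kappa(\eta)>-\frac{n-2}{2}$. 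Since on the line $m=-\frac{n-2}{2}$ one has $\dot m=\frac{(n-2)^2}{4}-h(\eu^t)>0$ by \textbf{H}, that line is a one–sided barrier that can be crossed only upward; hence $m^u(\tau)>-\frac{n-2}{2}$ for all $\tau$, and as $m^u$ stays finite we get $\Theta^u(\tau)\in(-\arctan(\frac{n-2}{2}),\pi/2)$.

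Next I would prove that $S$ is positively invariant for~\eqref{si.hardy} while $t\le\mathfrak T$. On the upper edge $x=0$, $y>0$ we have $\dot x_{l_u}=y>0$, i.e. the flow enters $x>0$ (equivalently $\dot\phi=-1<0$, cf.~\eqref{flowyaxis}), so the branch cannot cross the positive $y$–axis. On the lower edge $y_{l_u}=-\frac{n-2}{2}x_{l_u}$, $x_{l_u}>0$, the identity
\begin{equation*}
\frac{d}{dt}\Big(y_{l_u}+\tfrac{n-2}{2}x_{l_u}\Big)=x_{l_u}\Big[\tfrac{(n-2)^2}{4}-h(\eu^{t})\Big]-g_{l_u}(x_{l_u},t)>0
\end{equation*}
holds, since $\frac{(n-2)^2}{4}-h(\eu^t)>0$ by \textbf{H} and $g_{l_u}(x_{l_u},t)\le0$ for $x_{l_u}>0$, $t<\mathfrak T$ by $\Lu$; thus this edge too is crossed only inward. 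Both edges being strictly inflow for $x>0$, a trajectory strictly inside $S$ stays strictly inside while $t\le\mathfrak T$. Applying this to $\Q=\Sigma^{u,+}_{l_u}(\omega,\tau)$ with $\omega>0$: its backward trajectory tends to the origin tangentially to $\ell^u$, hence lies strictly inside $S$ for $t\ll0$, and running it forward up to $t=\tau\le\mathfrak T$ keeps it there. As the angular width of $S$ equals $\frac{\pi}{2}+\arctan(\frac{n-2}{2})<\pi$ the curve cannot wind (consistently with Remark~\ref{ws}), so the continuous angle $\theta^{u,+}(\omega,\tau)$ remains in $(-\arctan(\frac{n-2}{2}),\pi/2)$; the estimate for $W^{u,-}_{l_u}(\tau)$ follows identically in the reflected sector $\{x<0,\ y+\frac{n-2}{2}x<0\}$.

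For the unboundedness I would reparametrize the branch by $d\in[0,d^+_\tau)$ via $\Upsilon^{u,+}_{l_u}(\cdot,\tau)$ (Remark~\ref{Ups}, using the monotonicity of Lemma~\ref{paramreg}) and let $d\to d^+_\tau$. If $d^+_\tau=+\infty$, then by the monotone dependence of the \Rsol-solutions on $d$ the value $u(\eu^\tau,d)=x_{l_u}(\tau)\eu^{-\alpha_{l_u}\tau}$ grows without bound, so $\rho^{u,+}(\cdot,\tau;l_u)\to+\infty$. If instead $d^+_\tau<+\infty$ — the genuinely new situation forced by $\Lu$, where \Rsol-solutions positive near zero are convex for $r<\eu^{\mathfrak T}$ and blow up at a finite radius $\varrho_d$ — then by~\eqref{eqrhod} there is $d_n\uparrow d^+_\tau$ with $\varrho_{d_n}\downarrow\eu^\tau$, whence $u(\eu^\tau,d_n)\to+\infty$ and again $\rho^{u,+}\to+\infty$. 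In either case $\limsup_{\omega\to+\infty}\rho^{u,\pm}(\omega,\tau;l_u)=+\infty$.

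I expect the invariant–sector part to be routine, being a transcription of the computation in Lemma~\ref{nongira} combined with the Riccati barrier at $m=-\frac{n-2}{2}$. The main obstacle will be the unboundedness: it rests entirely on the lack of continuability, and the delicate point is to show that approaching the end of the admissible parameter interval forces the phase point to infinity, handling on equal footing the finite–radius blow‑up case $d^+_\tau<+\infty$ (with $\varrho_d\to\eu^\tau$) and the case $d^+_\tau=+\infty$. This needs the monotone dependence on $d$ together with a careful use of~\eqref{eqrhod} and Remark~\ref{Ups} — precisely the type of difficulty the paper singles out as novel.
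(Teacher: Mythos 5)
Your treatment of the angular estimates is correct, and it is a genuinely different (in fact more self-contained) route than the paper's: the paper confines a connected piece of $W^{u,\pm}_{l_u}(\tau)$ inside truncated triangles $Z(\xi)$ and invokes the Wazewski-type Lemma~\ref{box}, whereas you obtain the bound for \emph{every} point of the branch by combining the inflow computation on the line $y=-\frac{n-2}{2}x$ (your identity is exactly the paper's \eqref{z3}), the Riccati barrier $\dot m=-(m^2+(n-2)m+h(\eu^t))$ for the slope of $\ell^u(\tau)$, and the fact that backward trajectories on $W^{u,+}_{l_u}(\tau)$ approach the origin in the direction $(1,-\kappa(\eta))$, which lies strictly inside the sector. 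Modulo the standard dichotomy facts from Appendix~\ref{app1} (nonzero points of the branch have $d>0$ and the stated asymptotic direction), this part stands.

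The unboundedness part, however, has a genuine gap, and it sits exactly where the paper's topological lemma does real work. In the case $d^+_\tau=+\infty$ you conclude $u(\eu^\tau,d)\to+\infty$ ``by the monotone dependence of the \Rsol-solutions on $d$''. No such monotonicity exists in the paper or in general: Lemma~\ref{paramreg} only says that the parameter $d(\omega)$ is monotone \emph{along} the manifold, not that $d\mapsto u(\eu^\tau,d)$ is monotone or unbounded; solutions of the nonlinear equation with ordered data at $r=0$ may cross, and nothing you have written excludes that the branch, although injectively parametrized by $d\in[0,+\infty)$ and trapped in a sector of width $<\pi$ (so it cannot wind), accumulates on a bounded set. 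In the case $d^+_\tau<+\infty$ there are two further problems. First, ``$d_n\uparrow d^+_\tau$ with $\varrho_{d_n}\downarrow\eu^\tau$'' does not follow from~\eqref{eqrhod}: the blow-up radius is only \emph{lower} semicontinuous (this is what is proved for $\mathfrak T(\Q,\tau)$ in Appendix~\ref{app1}), so the definition of the supremum yields a sequence $d_n\downarrow d^+_\tau$ \emph{from above} with $\varrho_{d_n}\le\eu^\tau$, while for $d\uparrow d^+_\tau$ you only know $\varrho_d>\eu^\tau$. Second, even granting (by continuous dependence) that $u(\cdot,d)$ becomes large at radii just below $\varrho_{d^+_\tau}\le\eu^\tau$, largeness \emph{at} $r=\eu^\tau$ does not follow: the trajectory could shrink again on $[\varrho_{d^+_\tau},\eu^\tau]$. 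This second point is repairable inside your framework (for $\sigma>\frac{n-2}{2}$ with $\sigma(n-2-\sigma)>\sup_{t\le\mathfrak T}h(\eu^t)$, \textbf{H} and $\Lu$ give $\frac{d}{dt}\bigl(y_{l_u}+\sigma x_{l_u}\bigr)\ge(\gamma_{l_u}+\sigma)\bigl(y_{l_u}+\sigma x_{l_u}\bigr)$ in the sector, a quantity comparable to $\rho_{l_u}$ there, so large trajectories stay large over the bounded time window); also your convexity remark is unjustified once the Hardy term may be positive. But the case $d^+_\tau=+\infty$ has no such fix: it needs precisely the connectedness argument the paper uses, namely that for every $\xi>0$ Lemma~\ref{box} produces a connected subset of $W^{u,+}_{l_u}(\tau)\cap Z(\xi)$ joining the origin to the edge $\{x=\xi\}$, which yields $\limsup_{\omega\to+\infty}\rho^{u,\pm}(\omega,\tau;l_u)=+\infty$ with no reference to $d^+_\tau$, to blow-up radii, or to any monotonicity in $d$.
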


\noindent

\begin{proof}
As usual, we give the proof for $W^{u,+}_{l_u}(\tau)$, the other follows similarly.
Let
$$
S(\xi) = \sup \left( \left\{  -h(\eu^t) - \frac{g_{l_u}(x,t)}{x} \,:\, 0<x\leq \xi\,, t\leq \mathfrak{T} \right\}  \cup \{0\} \right)\, .
$$
Notice that  $S(\xi)$ is  increasing, and by  $\Humeno$,
$S(\xi) <+\infty$ for any fixed $\xi$. Let $\mathfrak{m}(\xi)= \sqrt{|S(\xi)|}$,
and set $\boldsymbol{A}(\xi)=(\xi,\mathfrak{m}(\xi) \xi)$,
$\boldsymbol{B}(\xi)=(\xi,-\frac{n-2}{2}\xi)$.

This proof is analogous to the one of Lemma \ref{nongira} and relies on Lemma \ref{box}.
We construct
the triangle $Z(\xi)$ with vertices $\boldsymbol{O}$, $\boldsymbol{A}(\xi)$,
$\boldsymbol{B}(\xi)$, and  with  edges $o(\xi)$, $a(\xi)$, $b(\xi)$
opposite to the vertices $\boldsymbol{O}$, $\boldsymbol{A}(\xi)$,
$\boldsymbol{B}(\xi)$ respectively.

Let  $\boldsymbol{x_{l_u}}(t_o) \in  b(\xi)$ at a certain time $t_o\leq
\mathfrak T$,
then
\begin{equation}\label{z4}
\begin{split}
& \left. \frac{d}{dt} \big(  y_{l_u} - \mathfrak m(\xi) x_{l_u} \big) \right|_{t=t_o} \\
&= - \left( \mathfrak m(\xi) (\mathfrak m(\xi) + n -2)
+h(\eu^{t_o}) + \frac{g_{l_u}(x_{l_u}(t_o),t_o)}{x_{l_u}(t_o)}
\right) x_{l_u}(t_o) \\
&< -\left( S(\xi)+h(\eu^{t_o}) + \frac{g_{l_u}(x_{l_u}(t_o),t_o)}{x_{l_u}(t_o)}
\right) x_{l_u}(t_o) \le 0 \,.
\end{split}
\end{equation}
  From  $\Lu$ we see that if
 $\boldsymbol{x_{l_u}}(t_o) \in a(\xi)$ at a certain time $t_o\leq \mathfrak T$, then
\begin{equation}\label{z3}
\begin{split}
& \left. \frac{d}{dt} \left(   y_{l_u} + \frac{n-2}{2} x_{l_u} \right)\right|_{t=t_o}\\
& =
\frac{n-2}{2} x_{l_u}(t_o) \left( \alpha_{l_u}- \gamma_{l_u}- \frac{n-2}{2} \right) - h(\eu^{t_o}) x_{l_u}(t_o) -
g_{l_u}(x_{l_u}(t_o),t_o)  \\
& =  x_{l_u}(t_o) \left[ \frac{(n-2)^2}{4}  - h(\eu^{t_o}) -
\frac{g_{l_u}(x_{l_u}(t_o),t_o)}{x_{l_u}(t_o)} \right]>0  \,.
\end{split}
\end{equation}
So from~\eqref{z4},~\eqref{z3} the flow of~\eqref{si.hardy} on $a(\xi) \cup b(\xi)$,
 points towards the interior of $Z(\xi)$, for any $\tau \le \mathfrak T$.

Assume first $l_u > 2^*$ so that
  on $o(\xi)$ we have $\dot{x}_{l_u}>0$.
 The flow on $o(\xi)$ points towards the exterior of $Z(\xi)$;
hence we can apply Lemma~\ref{box}, and we find
 a connected subset $\mathcal{W}(\tau)\subset
( W^u_{l_u}(\tau) \cap Z(\xi) )$ containing  $\boldsymbol{O}$
 and a point in $o(\xi)$, for any $\xi>0$ and any $\tau \le \mathfrak T$.
Such a procedure can be repeated for $\xi$ arbitrarily large, thus concluding the proof of the lemma.

Now assume $l_u \in (2_*(\eta), 2^*]$. From Remark \ref{cambialu} we see that $\Gumeno$ holds for $L_u>2^*$ too,
so we can construct $W^{u,+}_{L_u}(\tau)$ and $W^{u,-}_{L_u}(\tau)$. Fix as above $t_o \le \mathfrak{T}$
 and   consider the flow of \eqref{si.hardy} for $t \le t_o$.
 We construct again the triangle $Z(\xi)$ and we observe that \eqref{z4} and \eqref{z3} continue  to hold, and the flow of
 \eqref{si.hardy} on $o(\xi)$ points outwards. So we conclude via Lemma~\ref{box} as above the existence of a subset $\mathcal{W}(\tau)$
 of
 $W^{u,+}_{L_u}(\tau)$ such that $\boldsymbol{O} \in \mathcal{W}(\tau)$ and $\mathcal{W}(\tau) \cap o(\xi) \ne \emptyset$.
 So we prove the Lemma for $W^{u,+}_{L_u}(\tau)$ for the arbitrariness of $\xi$.
 Then, recalling that  $W^{u,+}_{L_u}(\tau)$ and  $W^{u,+}_{l_u}(\tau)$ are omothetic we conclude.
\end{proof}

\medbreak

The following lemma investigates the presence of intersections between the unstable manifold $W^{u,+}_{l_u}$ and
 the manifolds $W^{s,\pm}_{l_u}$ which are omothetic to the stable manifold $W^{s,\pm}_{l_s}$.
 A similar reasoning was adopted already in~\cite{BDF,DF,FmSa,JK}.

\begin{lemma}\label{intersezioni}
Assume the hypotheses of Theorem~\ref{tm3}.
 Then
$W^{u,+}_{l_u}(\mathfrak T)$ intersects
$W^s_{l_u}(\mathfrak T)$ in a sequence of points $\boldsymbol{Q^{*,+}_j}$, for
 any $j \in \mathbb{N} $, where $\mathfrak T$ is defined in $\Lu$. Moreover, we can assume that
$\boldsymbol{Q^{*,+}_j}\in W^{s,+}_{l_u}(\mathfrak T)$ if $j$ is even, while
$\boldsymbol{Q^{*,+}_j}\in W^{s,-}_{l_u}(\mathfrak T)$ if $j$ is odd.
\end{lemma}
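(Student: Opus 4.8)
The plan is to read off the statement from the mismatch between the angular confinement of the unstable branch and the unlimited winding of the stable branches, both taken at $\tau=\mathfrak T$, and then to extract the intersections by a Jordan--curve (separation) argument. The two geometric ingredients are already available exactly at $\tau=\mathfrak T$: by Lemma~\ref{wu.hardy} the branch $W^{u,+}_{l_u}(\mathfrak T)$ is a connected arc issuing from the origin which stays, for $\omega>0$, inside the open angular sector
$$
\mathcal S:=\Big\{\,-\arctan\big(\tfrac{n-2}{2}\big)<\theta<\tfrac{\pi}{2}\,\Big\}
$$
and satisfies $\limsup_{\omega\to+\infty}\rho^{u,+}(\omega,\mathfrak T;l_u)=+\infty$; while by Lemma~\ref{spiral}, together with the fact that $W^{s,\pm}_{l_u}(\mathfrak T)$ is omothetic to $W^{s,\pm}_{l_s}(\mathfrak T)$ and hence shares its angular coordinate (see~\eqref{cambioL}), both $W^{s,+}_{l_u}(\mathfrak T)$ and $W^{s,-}_{l_u}(\mathfrak T)$ wind counterclockwise around the origin indefinitely, i.e. $\theta^{s,\pm}(\sigma,\mathfrak T)\to+\infty$. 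Moreover Lemma~\ref{nongira} forces the two stable branches to leave the origin on opposite rays lying strictly outside $\mathcal S$, namely $\theta^{s,+}(0,\mathfrak T)=\arctan(m^s(\mathfrak T))<-\arctan(\frac{n-2}{2})$ and $\theta^{s,-}(0,\mathfrak T)=\pi+\arctan(m^s(\mathfrak T))$.

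First I would isolate, for each loop, the arc of a stable branch that crosses the sector $\mathcal S$. Since each $\theta^{s,\pm}(\cdot,\mathfrak T)$ increases to $+\infty$, each branch enters $\mathcal S$ through the lower ray $\theta=-\arctan(\frac{n-2}{2})$ and leaves it through the upper ray $\theta=\frac{\pi}{2}$ infinitely many times; denote by $A$ one such crossing arc, with endpoints $\boldsymbol{\beta}$ on the lower ray and $\boldsymbol{\tau}$ on the upper ray and with $A$ contained in $\overline{\mathcal S}$ in between. Closing $A$ with the two segments joining $\boldsymbol{\beta}$ and $\boldsymbol{\tau}$ to the origin along the boundary rays produces a bounded Jordan region $D$. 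The unstable arc leaves the origin strictly inside $\mathcal S$ (so, for small $\omega>0$, inside $D$) and, having $\limsup\rho^{u,+}=+\infty$, eventually escapes the bounded set $D$; hence it meets $\partial D$. But $W^{u,+}_{l_u}(\mathfrak T)$ never touches the two bounding rays, since it stays in the \emph{open} sector $\mathcal S$ for $\omega>0$, so the crossing must occur on $A$ itself, producing a point of $W^{u,+}_{l_u}(\mathfrak T)\cap W^{s,\pm}_{l_u}(\mathfrak T)$. Applying this to every loop of both stable branches yields infinitely many intersection points.

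Finally, to obtain the alternation I would order the sector--crossing arcs by nesting: because the two branches depart on opposite rays (Lemma~\ref{nongira}) and each advances its angle by $2\pi$ per loop, the crossing arcs of $W^{s,+}_{l_u}(\mathfrak T)$ and of $W^{s,-}_{l_u}(\mathfrak T)$ interleave, the associated Jordan regions forming a nested sequence $D_0\subset D_1\subset\cdots$ whose boundaries alternate between the two branches. As $\omega$ increases the unstable arc exits these nested regions one after another, so labelling $\boldsymbol{Q^{*,+}_j}$ by the nesting index $j$ gives exactly $\boldsymbol{Q^{*,+}_j}\in W^{s,+}_{l_u}(\mathfrak T)$ for $j$ even and $\boldsymbol{Q^{*,+}_j}\in W^{s,-}_{l_u}(\mathfrak T)$ for $j$ odd; equivalently the parity is read off from the winding count via Lemma~\ref{thetaphiS} and Remark~\ref{ws}, which identify the number of completed half--turns with the number of nodal crossings of the corresponding solution. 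The main obstacle is precisely this last step: one must verify rigorously that successive loops of the two stable spirals interleave, so that the regions nest in the stated $+,-,+,\dots$ order, and that the immersed (possibly non-embedded) manifolds can be cut into genuinely simple crossing arcs. This is where the monotone rotation furnished by Remark~\ref{ws} and the angle bookkeeping of Lemma~\ref{thetaphiS} do the decisive work, while the existence of the intersections themselves is the comparatively soft separation argument above.
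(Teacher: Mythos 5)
Your proposal is correct and follows essentially the same route as the paper's proof: confinement of $W^{u,+}_{l_u}(\mathfrak T)$ to a fixed angular sector together with unbounded radius (Lemma~\ref{wu.hardy}), unlimited counterclockwise winding of the two stable branches (Lemma~\ref{spiral}) which leave the origin outside that sector (Lemma~\ref{nongira}), and a separation/exit argument producing one intersection per loop, with the even/odd assignment coming from the ordering of the two interleaved branches. The paper phrases the identical argument in the unrolled polar strip, replacing your nested Jordan regions by the shifted curves $\Gamma^s_j$ and the regions $\hat F_j^\uparrow(\tau)$, which streamlines exactly the bookkeeping you flag as the main obstacle and yields the increasing first-intersection parameters $d^*_j$ needed in the subsequent lemmas.
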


\begin{proof}
 Fix $\tau= \mathfrak T$.
From Lemma~\ref{spiral} we know that $W^{s,+}_{l_u}(\tau)$ and $W^{s,-}_{l_u}(\tau)$ are two spirals rotating counterclockwise around the origin, and each of them
   is cut infinitely many times by $W^{u,+}_{l_u}(\tau)$ (see Figure~\ref{sketch}):
   at least once at each rotation respectively at the point $\boldsymbol{Q_{2j}}$
   and $\boldsymbol{Q_{2j+1}}$, by the property shown in Lemma~\ref{wu.hardy}.

We develop this argument in polar coordinates too, for clarity and for later purposes.
 Using also Lemma~\ref{nongira} we see that
$$
\begin{array}{l}
 \Theta^{s,+}(0, \tau)\in (-\pi/2, 0)\,, \quad
 \Theta^{s,-}(0, \tau)\in (-3\pi/2, -\pi) \\
 \quad \big( \text{where } \Theta^{s,\pm}(0, \tau):=\lim_{L \to 0} \Theta^{s,\pm}(L, \tau)\big)\,, \\
\lim_{L \to L^{\pm}_{\tau}} \Theta^{s,\pm}(L, \tau)=+\infty\,, \\
\Theta^{u,+}(d, \tau) \in (-\pi/2, \pi/2)\,, \quad
\Theta^{u,-}(d, \tau) \in (-3\pi/2, -\pi/2)\,, \\
\lim_{d \to d_{\tau}} R ^{u,+}(d,\tau ;l_u)=+\infty\,.
\end{array}
$$
 Consider the following curves in the stripe $(\Theta,R) \in \mathcal S= \RR \times [0,+\infty)$:

\begin{equation}\label{oggi}
\begin{array}{rcl}
 \Gamma^{u,\pm}(d,\tau)&:=&(\Theta ^{u,\pm}(d,\tau), R ^{u,\pm}(d,\tau;l_u))\,, \\
  \Gamma^{s,\pm}(L,\tau)&:=&(\Theta^{s,\pm}(L,\tau), R ^{s,\pm}(L,\tau;l_u))\, ,\\
\Gamma^{s}_{2k}(L,\tau)&:=&(\Theta^{s,+}(L,\tau)- 2k \pi, R ^{s,+}(L,\tau;l_u))\,, \\
\Gamma^{s}_{2k+1}(L,\tau)&:=&(\Theta^{s,-}(L,\tau)-2 k \pi, R ^{s,-}(L,\tau;l_u))\,,
\end{array}
\end{equation}
for $k \in \mathbb{N}$.
Let us introduce, for every $k\geq 0$,
\begin{equation}\label{Larrow}
\begin{array}{rcl}
\hat L^\uparrow_{2k} &:=& \min \{ L>0 \mid (\Theta^{s,+}(L,\tau)- 2k \pi = \pi/2 \}\,,\\
\hat L^\downarrow_{2k} &:=& \min \{ L>0 \mid (\Theta^{s,+}(L,\tau)- 2k \pi = -\pi/2 \}\,,\\
\hat L^\uparrow_{2k+1} &:=& \max \{ L<0 \mid (\Theta^{s,-}(L,\tau)- 2k \pi = \pi/2 \}\,, \\
\hat L^\downarrow_{2k+1} &:=& \max \{ L<0 \mid (\Theta^{s,-}(L,\tau)- 2k \pi = -\pi/2 \}\,,
\end{array}
\end{equation}
 (except $\hat L_0^\downarrow:=0$)
and notice that $|\hat L_j^\downarrow| < |\hat L_j^\uparrow| < |\hat L_{j+2}^\downarrow|$ holds by construction, all having the same sign. In fact, roughly speaking, we have denoted with ``$\,\uparrow\,$'' the intersections of the stable manifold (the first at any lap) with the positive $y$-semiaxis and with ``$\,\downarrow\,$'' the ones with the negative; moreover even subscripts correspond to the intersections of $W^{s,+}_{l_u}(\tau)$, respectively odd subscripts to the ones of  $W^{s,-}_{l_u}(\tau)$ (cf. Figure~\ref{sketch}a).

  Let $\hat F_j^\uparrow(\tau)$ be the open region delimited by $\{ \Omega=\Gamma^s_{j}(L,\tau) \mid 0\leq |L| \leq |\hat L^\uparrow_j| \}$ and the lines $R=0$ and $\Theta=\pi/2$ (see again Figure~\ref{sketch}a).
  Similarly let $\hat F_j^\downarrow(\tau)$ be the open region delimited by $\{ \Omega=\Gamma^s_{j}(L,\tau) \mid 0\leq |L| \leq |\hat L^\downarrow_j| \}$ and the lines $R=0$ and $\Theta=-\pi/2$.
It is easy to check (see e.g. the proof of Lemma 3.9 in~\cite{DF} for details) that, for any $j \in \mathbb{N}$, the curve $d \to \Gamma^{u,+}(\cdot,\tau)$ is in $\hat F_j^\uparrow(\tau)$ for $d$ small and outside for $d$ large. So it  intersects
 the graph of
 $\Gamma^{s}_j$:   these intersections corresponds to distinct points $\bs{Q_j} \in \RR^2$.
  We are interested in the first intersections, in the sense of the parameter $d$, so let us set
\begin{equation}\label{defQed}
    \begin{aligned}
      d^*_0  := & \min\{ d > 0  \mid \textrm{$\exists L \in (0, L^+_{\tau})$ : } \;  \Gamma^{u,+}(d,\tau)=\Gamma^{s}_{0}(L,\tau) \} \,, \\
      d^*_1  := & \min\{ d > d^*_0 \mid \textrm{$\exists L \in ( L^-_{\tau},0)$ : } \;  \Gamma^{u,+}(d,\tau)=\Gamma^{s}_{1}(L,\tau) \} \,, \\
      d^*_{2k}  := & \min\{ d > d^*_{2k-1} \mid \textrm{$\exists L \in (0, L^+_{\tau})$ : } \;  \Gamma^{u,+}(d,\tau)=\Gamma^{s}_{2k}(L,\tau) \} \,,\\
      d^*_{2k+1}  := & \min\{ d > d^*_{2k} \mid  \textrm{$\exists L \in (L^-_{\tau},0)$ : } \;  \Gamma^{u,+}(d,\tau)=\Gamma^{s}_{2k+1}(L,\tau)\} \,,
    \end{aligned}
\end{equation}
for any $k \ge 1$. We denote by $L^*_j$ the unique value in $(L^-_{\tau},L^+_{\tau})$ such that
$\Gamma^{u,+}(d^*_j,\tau)=\Gamma^{s}_{j}(L^*_j,\tau)$, and we set
 $\boldsymbol{Q^{*,+}_j}(\tau):=\Upsilon^{u,+}_{l_u}(d^*_j,\tau)$,
and   $\Omega^{*}_j(\tau):= \Gamma^{u,+}(d^*_j,\tau)$, the corresponding {\em switched} polar coordinates (see Figure~\ref{sketch}a).
Moreover, notice that by construction $|\hat L_j^\downarrow|<|L_j^*| <|\hat L_j^\uparrow|$ holds, all having the same sign.
\end{proof}

\begin{figure}[t]
\centerline{\epsfig{file=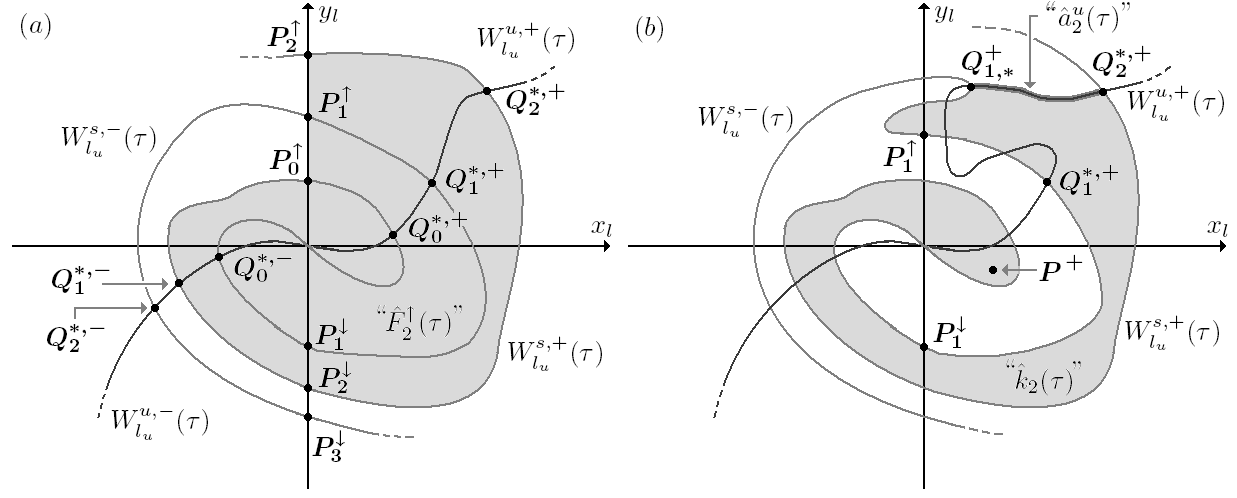, width = 12 cm}}
\caption{
We give here a sketch of the introduced notations. For simplicity we represent the sets not on the stripe $\mathcal S$ but in the $(x_l,y_l)$-plane. The points $\bs{P_i^\uparrow}$ and $\bs{P_i^\downarrow}$ correspond to the coordinate values $\hat L_i^\uparrow$ and $\hat L_i^\downarrow$  introduced in~\eqref{Larrow}.
In (a) we have colored the region ``$\hat F_2^\uparrow(\tau)$'' corresponding to $\hat F_2^\uparrow(\tau)$ for illustrative purpose:
being $W^{u,+}_{l_u}(\tau)$ unbounded, then it exits from ``$\hat F_2^\uparrow(\tau)$'' for the first time at $\bs{Q_2^{*,+}}=\Upsilon^{u,+}_{l_u}(d_2^*,\tau)=\Upsilon^{s,+}_{l_u}(L_2^*,\tau)$. So, we can verify that $|\hat L_2^\downarrow|<|L_2^*| <|\hat L_2^\uparrow|$ holds. Similarly $W^{u,-}_{l_u}(\tau)$ exits from ``$\hat F_2^\uparrow(\tau)$'' for the first time at
$\bs{Q_1^{*,-}}=\Upsilon^{u,-}_{l_u}( d_1^{*,-},\tau)=\Upsilon^{s,+}_{l_u}(L_1^{*,-},\tau)$
 and
$|\hat L^\uparrow_{0}| < |L^{*,-}_1| < |\hat L^\downarrow_{2}|$ is verified. In (b) the more tricky situation described in Lemma~\ref{nontutti} is drawn: $W^{u,+}_{l_u}(\tau)$ exits from ``$\hat F_1^\uparrow(\tau)$'' (not colored) for the first time at $\bs{Q_1^{*,+}}=\Upsilon^{u,+}_{l_u}(d_1^*,\tau)$, then it exits from
``$\hat F_2^\uparrow(\tau)$'' for the first time at $\bs{Q_2^{*,+}}=\Upsilon^{u,+}_{l_u}(d_2^*,\tau)$.
Between these points we locate $\bs{Q_{1,*}^+}=\Upsilon^{u,+}_{l_u}(d_{*,1},\tau)$ (so that $d_{*,1}>d^*_1$).
We have highlighted the set corresponding to $\hat a_2^u(\tau)$ defined in~\eqref{defauj} and colored the region corresponding to $\hat k_2(\tau)$.
We emphasize that in the simpler case $d_1^*=d_{*,1}$, $\hat a_2^u(\tau)$ consists of the whole branch between $\bs{Q_1^{*,+}}$ and $\bs{Q_2^{*,+}}$ as in (a). Finally, arguing as in the proof of Remark~\ref{regularslow} all the points of $\hat a_2^u(\tau)$ and $\hat k_2(\tau)$ tend to $\bs{P^+}$ as $t\to+\infty$.
}
\label{sketch}
\end{figure}

\begin{remark}
A similar result can be obtained for the unstable manifold $W^{u,-}_{l_u}(\tau)$ (cf. Figure~\ref{sketch}a): the curve $\Gamma^{u,-}(\cdot,\tau)$ must exit from $\hat F_j^\downarrow(\tau)$, so $W^{u,-}_{l_u}(\tau)$ intersects the stable manifold $W^{s}_{l_s}(\tau)$
in the distinct points $\boldsymbol{Q^{*,-}_j}(\tau)$  with switched polar coordinates   $\Omega^{*,-}_j(\tau)\in \mathcal S$.
In particular
$\boldsymbol{Q^{*,-}_j}(\tau)\in W^{s,-}_{l_u}(\tau)$ if $j$ is even, while
$\boldsymbol{Q^{*,-}_j}(\tau)\in W^{s,+}_{l_u}(\tau)$ if $j$ is odd. Moreover, we can identify the corresponding sequences $(d_j^{*,-})_{j\geq 0}$, $(L_j^{*,-})_{j\geq 0}$, with $d_j^{*,-}<0$ and $(-1)^j L_j^{*,-}<0$, such that
$\Omega^{*,-}_j(\tau)=\Gamma^{u,-}(d_j^{*,-},\tau)=\Gamma^s_j(L_j^{*,-},\tau)$.
 Let us set  $\hat L^\uparrow_{-1}:=0$; by construction we find $|\hat L^\uparrow_{j-1}| < |L^{*,-}_j| < |\hat L^\downarrow_{j+1}|$, all having the same sign. We stress that the sequences in the statement of Theorem~\ref{tm3} can be now introduced as
$A_j^+=d^*_j$, $A_j^-=|d^{*,-}_j|$, $B_{2k}^+=L_{2k}^*$ and $B_{2k+1}^+=L_{2k+1}^{*,-}$, $B_{2k}^-=|L_{2k}^{*,-}|$ and $B_{2k+1}^-=|L_{2k+1}^*|$: they are strictly increasing by construction.
\end{remark}

By construction, $\boldsymbol{x}_{l_u}(t;\mathfrak T, \boldsymbol{Q^{*,+}_j}(\mathfrak T))$ is a homoclinic
trajectory of~(\ref{si.hardy}), and the corresponding solution $u(r,d^*_j)$
of~(\ref{hardy}) is a \sol R{ }f solution.    We introduce the following notation:
let $\bar{\Omega}=(\bar{\Theta},\bar{R})$ be a point in $\mathcal S$, and $\boldsymbol{\bar{Q}}=
\bar{R}(\cos(\bar\Theta), \sin(\bar\Theta))$;  we denote by
$ \Omega_{l_u}(t,\mathfrak T, \bar{\Omega})=(\phi(t,\mathfrak T, \bar{\Omega}),\rho_{l_u}(t,\mathfrak T, \bar{\Omega}))$ the
switched polar coordinates of
  $\boldsymbol{x_{l_u}}(t,\mathfrak T, \boldsymbol{\bar{Q}})$, such that $ \Omega_{l_u}(\mathfrak T,\mathfrak T, \bar{\Omega})=\bar{\Omega}$ (so that it is uniquely defined).
 With a little abuse of notation we denote by $ \Omega_{l_s}(t,\mathfrak T, \bar{\Omega})=( \phi(t,\mathfrak T, \bar{\Omega}), \rho_{l_s}(t,\mathfrak T, \bar{\Omega}))$
  the switched  polar coordinates of $\boldsymbol{x_{l_s}}(t,\mathfrak T, \boldsymbol{\bar{R}})$,
  where  $\boldsymbol{\bar{R}}=\boldsymbol{\bar{Q}} \eu^{(\alpha_{l_s}-\alpha_{l_u})\mathfrak{T}}$, and we observe that the angular coordinate
  is the same as in $ \Omega_{l_u}(t,\mathfrak T, \bar{\Omega})$, while the radial coordinate is multiplied by $ \eu^{(\alpha_{l_s}-\alpha_{l_u})t}$.

\begin{lemma}\label{gsfd}
Assume the hypotheses of Theorem~\ref{tm3}.
Then, $u(r,d^*_j)$ is \sol Rjf. In particular,
 $u(r,d_0^*)$ is a positive solution.
\end{lemma}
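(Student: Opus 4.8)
The solution $u(r,d^*_j)$ is already known to be a \sol R{}f solution: as observed right before the statement, $\bs{x}_{l_u}(t,\mathfrak T,\bs{Q^{*,+}_j}(\mathfrak T))$ is a homoclinic trajectory of~\eqref{si.hardy}, so Lemma~\ref{corrispondenze.bis} applies. Hence the only thing left is to count its zeros and check their nondegeneracy. Since $u(r)=x_{l_u}(t)\eu^{-\alpha_{l_u}t}$ with $r=\eu^t$, a zero of $u$ corresponds exactly to a crossing of the $y$-axis by the trajectory, i.e. to a time at which the angular coordinate $\phi$ of~\eqref{traj} equals an odd multiple of $\pi/2$; by~\eqref{flowyaxis} such a crossing is transversal ($y_{l_u}\neq0$ there, so $u'\neq0$), whence the zero is nondegenerate. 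By Remark~\ref{ws} the integer $\Intera\left[\tfrac12+\phi(t)/\pi\right]$ is nonincreasing in $t$ and drops by exactly one at each such crossing, so the number of zeros equals the total decrease of this integer as $t$ runs over $\RR$. The plan is therefore to show that this decrease is exactly $j$.

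First I would treat $t\le\mathfrak T$. By Lemma~\ref{wu.hardy} the whole branch $W^{u,+}_{l_u}(\tau)$ lies in the angular sector $(-\arctan\frac{n-2}{2},\pi/2)$, which has width less than $\pi$ and contains no odd multiple of $\pi/2$; since the backward trajectory stays on this branch (it cannot leave $x>0$) and tends to the origin along $\ell^u(-\infty)$, no $y$-axis crossing occurs for $t\le\mathfrak T$. In particular $u(r,d^*_j)>0$ on $(0,R]$ and $\Intera\left[\tfrac12+\phi(\mathfrak T)/\pi\right]=0$.

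It then remains to count the crossings for $t\ge\mathfrak T$. Here I would invoke Lemma~\ref{thetaphiS}: the forward angle swept by the trajectory issuing from $\bs{Q^{*,+}_j}(\mathfrak T)$ equals, up to an error bounded by $\pi$ as in~\eqref{angle}, minus the angle wound by the stable spiral from the origin to $\bs{Q^{*,+}_j}(\mathfrak T)$. By the construction in Lemma~\ref{intersezioni} this point sits on $\Gamma^s_j$, i.e. on $W^{s,+}_{l_u}$ if $j=2k$ and on $W^{s,-}_{l_u}$ if $j=2k+1$; recalling $\Theta^{s,+}(0,\mathfrak T)\in(-\pi/2,0)$ and $\Theta^{s,-}(0,\mathfrak T)\in(-3\pi/2,-\pi)$ (these ranges following from Lemma~\ref{nongira}), and that the matched angle $\Theta^{u,+}(d^*_j,\mathfrak T)$ lies in $(-\pi/2,\pi/2)$, the wound angle is close to $j\pi$ in both parities. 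Combined with the limiting direction $\arctan(m^s(+\infty))=-\arctan(n-2-\kappa(\beta))\in(-\pi/2,0)$ from Remark~\ref{differenze2}, this pins the lift of $\phi(+\infty)$ so that $\Intera\left[\tfrac12+\phi(+\infty)/\pi\right]=-j$. Hence exactly $j$ crossings occur for $t\ge\mathfrak T$, the total number of zeros is $j$, and $u(r,d^*_j)$ is a \sol Rjf; taking $j=0$ gives a solution with no zeros, i.e. $u(r,d_0^*)$ is positive.

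The delicate point, and the main obstacle, is precisely this last integer count: one must rule out that the winding produces $-(j\pm1)$ in place of $-j$. This is where the sharp width-less-than-$\pi$ sector bound of Lemma~\ref{wu.hardy}, the steepness estimate $m^s(\tau)<-\frac{n-2}{2}$ of Lemma~\ref{nongira}, and the bound $\le\pi$ on the correction term of Lemma~\ref{thetaphiS} have to be combined so that the accumulated error stays strictly below one half-turn. The parity bookkeeping of Lemma~\ref{intersezioni} (even index on $W^{s,+}_{l_u}$, odd index on $W^{s,-}_{l_u}$, with the two distinct starting-angle ranges) is essential to make both cases close exactly at $-j$.
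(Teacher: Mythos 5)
Your proposal is correct in outline and follows essentially the same route as the paper's proof: the homoclinic observation plus Lemma~\ref{corrispondenze.bis} gives the \sol R{}f character; the sector bound of Lemma~\ref{wu.hardy} excludes zeros for $t\le\mathfrak T$; and the forward crossings are counted through Lemma~\ref{thetaphiS}, the parity bookkeeping of Lemma~\ref{intersezioni}, the limiting directions of Remark~\ref{differenze2}, and the monotonicity of crossings in Remark~\ref{ws}.

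However, the step you yourself flag as ``the main obstacle'' is left as a plan, and the way you set it up (nominal value plus an error bounded by $\pi$) genuinely does not close: an uncertainty of one full half-turn, intersected with the mod-$\pi$ information coming from the limiting stable direction, still leaves the candidates $-j$ and $-(j\pm1)$ for the final integer, and one can check that the sector bound of Lemma~\ref{wu.hardy} and the steepness bound of Lemma~\ref{nongira} do not by themselves remove this ambiguity. The point, and this is what the paper's proof does, is that there is no error to control at all, because the two inputs are exact identities rather than inequalities. First, by the construction in Lemma~\ref{intersezioni} one has the equality of lifts $\phi(\mathfrak T,\mathfrak T,\Omega^*_{j})=\Theta^{s,+}(L^*_{j},\mathfrak T)-2k\pi$ for $j=2k$, and $=\Theta^{s,-}(L^*_{j},\mathfrak T)-2k\pi$ for $j=2k+1$. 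Second, the ``correction'' in~\eqref{angle} is an equality, equal (with the signs supplied by~\eqref{sommaangoli}) to the difference of tangent angles $\theta^{s,\pm}(0,+\infty)-\theta^{s,\pm}(0,\mathfrak T)$, not merely a quantity $\le\pi$. Substituting, all the $\mathfrak T$-dependent terms cancel and one obtains exactly $\phi(+\infty)=-j\pi-\arctan(n-2-\kappa(\beta))$; since $\phi(-\infty)=-\arctan(\kappa(\eta))$, the total angle swept is $-j\pi-\left[\arctan(n-2-\kappa(\beta))-\arctan(\kappa(\eta))\right]$, whose bracket lies in $(0,\pi)$ by Remark~\ref{differenze2}, so the trajectory crosses the $y$-axis exactly $j$ times, each crossing being transversal by~\eqref{flowyaxis}. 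Your conclusion is therefore right, but to make the argument complete you must replace the error estimate by these exact relations, which is precisely the computation carried out in the paper.
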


\begin{proof}

When we consider~\eqref{eq.na} we can simply repeat the proof of~\cite[Lemma~3.11]{DF}
with no changes. When we deal with~\eqref{hardy} we need to adapt slightly  the argument: we sketch the proof
  for reader's convenience.

If $j=2k$ is even we have
 $\phi(\mathfrak T,\mathfrak T, \Omega^*_{2k})=\Theta^{u,+}(d^*_{2k},\mathfrak T)=\Theta^{s,+}(L^*_{2k},\mathfrak T)-2 k\pi$,
 while if $j=2k+1$ is odd we have
 $\phi(\mathfrak T,\mathfrak T, \Omega^*_{2k+1})=\Theta^{u,+}(d^*_{2k+1},\mathfrak T)=\Theta^{s,-}(L^*_{2k+1},\mathfrak T)-2 k\pi$, see also Lemma~\ref{thetaphiS}.
Therefore
$\boldsymbol{x_{l_u}}(t,\mathfrak T, \bs{Q^*_j})$  performs the angle $\phi(\mathfrak T,\mathfrak T, \Omega^*_{j})+\arctan(\kappa(\eta))$
around the origin when $t \in(-\infty, \mathfrak T]$, and  it performs the angle $-j \pi-\arctan(n-2-\kappa(\beta))-\phi(\mathfrak T,\mathfrak T, \Omega^*_{j})$ for every $j$
when $ t \in [\mathfrak T,+\infty)$.
Summing up, $\boldsymbol{x_{l_u}}(t,\mathfrak T, \bs{Q^*_j})$ performs the angle
$-j \pi -\arctan(n-2-\kappa(\beta))+\arctan(\kappa(\eta))$.
Since $\arctan(n-2-\kappa(\beta))-\arctan(\kappa(\eta)) \in (0, \pi)$ we see that $\boldsymbol{x_{l_u}}(t,\mathfrak T, \bs{Q^*_j})$
crosses the $y$ axis exactly $j$ times and so $u(r,d^*_j)$ changes sign exactly $j$ times.
\end{proof}

\begin{remark}\label{nontutti}
The solution $u(d^*_j,r)$ of~\eqref{hardy} is a \sol R{j}f, and it is
definitively positive for $j$ even and definitively negative for $j$ odd. However, a priori, we may find some $d>0$, $d \ne d^*_k$ for any $k\ge 0$
such that $u(d,r)$ is a \sol R{}f (cf. Figure~\ref{sketch}b). In fact it may happen e.g. that $d \to\Gamma^{u,+}(d,\mathfrak T)$ intersects
$\Gamma^{s}_{0}(\cdot,\mathfrak T)$ in, say, three points: at $d=d^*_0<d_a<d_b$, thus corresponding to three distinct points $\bs{Q^*_0},\bs{Q^a_0},\bs{Q^b_0} \in \RR^2$.
In this case we have three positive \sol R0f: $u(r,d^*_0)$, $u(r,d^a_0)$, $u(r,d^b_0)$, see~\cite[Section 3]{DF} for a more detailed discussion
of this point.
\end{remark}
\begin{remark}\label{prol}
We emphasize that   $\Gamma^{u,+}(d,\tau)$ and $\Gamma^{s}_{2k}(L,\tau)$ are well defined for any $0 \le d < d_\mathfrak T$, $0 \le L < L_\mathfrak T$,
 whenever $\tau \ge \mathfrak T$, cf $\Lu$.
Next
 if $\Gamma^{u,+}(\cdot,\mathfrak T)$ intersects $\Gamma^{s}_{j}(\cdot,\mathfrak T)$ in $\Omega^*_j$, then
 $\Gamma^{u,+}(\cdot,\tau)$ intersects $\Gamma^{s}_{j}(\cdot,\tau)$ in a point $\Omega^*_j(\tau)$
 corresponding to the same solution $u(d^*_j,r)$, for any $\tau \ge \mathfrak T$.
\end{remark}

Following the ideas of~\cite[Section 3]{DF}, let us now turn to consider the solution $u(r,d)$ where $d \ne d^*_j$ for any $j \in \mathbb{N}$.
Fix $\tau\ge \mathfrak T$, we need to define several subsets of the stripe $\mathcal S$:
\begin{equation}\label{defset1}
\begin{array}{rcl}
  \hat{A}^u_j(\tau) &:= & \{ \Omega=\Gamma^u(d,\tau) \mid 0 \le d \le d^*_j \}  \,, \\
    \hat{B}^s_{2k}(\tau) &:= & \{ \Omega=\Gamma^{s}_{2k}(L,\tau) \mid 0 \le L \le L^*_{2k} \} \,, \\
       \hat{B}^s_{2k+1}(\tau) &:= & \{ \Omega=\Gamma^{s}_{2k+1}(L,\tau) \mid L^*_{2k+1} \le L \le 0 \} \,,
\end{array}
\end{equation}
for any $j, k \in \mathbb{N}$. We denote by $\hat{E}_j(\tau)$ the open set enclosed by $\hat{A}^u_j(\tau)$,
$\hat{B}^s_{j}(\tau)$ and the line $R=0$, which is bounded for any $\tau \ge \mathfrak T$ and any $j \in \mathbb{N}$.
Further set $d_{*,-1}:=0$ and, for $j \geq 0$,
$$
d_{*,j}:= \max \{ d \in [0, d^*_{j+1}) \mid \textrm{there is $L \in \RR$ so that } \; \Gamma^{u,+}(d,\tau)=\Gamma^{s}_{j}(L,\tau) \} \,.
$$
Observe that if $\Gamma^{u,+}(\cdot,\tau)$ has a unique intersection with $\Gamma^{s}_{j}(\cdot,\tau)$
then $d_{*,j}=d^{*}_{j}$ (in general we have $d_{*,j} \ge d^{*}_{j}$). We set (cf. Figure~\ref{sketch}b)
\begin{equation}\label{defauj}
    \hat{a}^u_j(\tau):= \{ \Omega=\Gamma^u(d,\tau) \mid d_{*,j-1} < d < d^*_j \}\,.
\end{equation}
Define $\hat{k}_0(\tau) = \hat{E}_0(\tau)$ and, for any $j>0$, denote by $\hat{k}_j(\tau)$ the open bounded set enclosed by $\hat{B}^s_{j}(\tau)$, $\hat{a}^u_j(\tau)$, $\hat{B}^s_{j-1}(\tau)$
and the line $R=0$ (observe that $\hat{k}_j(\tau) \subset \hat{E}_j(\tau)$).
Note that these sets have the following property.
\begin{remark}\label{key}
If $\bar{\Omega}$ belongs to  $\hat{A}^u_j(\tau)$, $\hat{B}^s_j(\tau)$, $\hat{E}_j(\tau)$, $\hat{k}_j(\tau)$, for some $\tau \ge\mathfrak T$
then $\Omega(t,\tau,\bar{\Omega})$ belongs respectively  to $\hat{A}^u_j(t)$, $\hat{B}^s_j(t)$, $\hat{E}_j(t)$, $\hat{k}_j(t)$
for any $t \ge \mathfrak T$.
\end{remark}
\begin{proof}
We  just sketch the proof which is strongly inspired by~\cite[Section 3]{DF} and in particular~\cite[Lemma 3.14]{DF}.
The claim concerning $\hat{A}^u_j(\tau)$, $\hat{B}^s_j(\tau)$ follows from construction. Then note that if
$\bar{\Omega} \not\in\hat{A}^u_j(\tau)$ then $\Omega(t,\tau,\bar{\Omega})\not\in\hat{A}^u_j(t)$ too, and the same property holds
for $\hat{B}^s_j(\tau)$. It follows that if $\bar{\Omega} \in \hat{E}_j(\tau)$ then $\Omega(t,\tau,\bar{\Omega})$
cannot cross $\hat{A}^u_j(t)$ and $\hat{B}^s_j(t)$, hence it is forced to stay in $\hat{E}_j(t)$ for any $t \in \RR$.
The claim concerning $\hat{k}_j(\tau)$ is analogous.
\end{proof}
Now we turn to consider \sol R{}s solutions.
\begin{lemma}\label{regularslow}
Assume the hypotheses of Theorem~\ref{tm3}
and fix $\tau \ge \mathfrak T$.
Then solutions $u(r,d)$ of~\eqref{hardy} corresponding to $\Omega(t,\tau,\bar{\Omega})$ with $\bar{\Omega} \in \hat a^u_j(\tau)$ are
\sol Rjs. In particular,
 $u(r,d)$ is a positive solution, for any $0<d<d^*_0$.
\end{lemma}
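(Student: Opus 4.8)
The plan is to verify three properties of the solution $u(r,d)$ attached to a point $\bar\Omega\in\hat a^u_j(\tau)$: that it is a \Rsol-solution, that it is a \sdsol-solution, and that it possesses exactly $j$ nondegenerate zeros. The first is immediate, since by construction $\hat a^u_j(\tau)\subset W^{u,+}_{l_u}(\tau)$, so Lemma~\ref{corrispondenze.bis} already gives that $u(r,d)$ is a \Rsol-solution, positive for $r$ small. The real content lies in the asymptotic behaviour as $r\to+\infty$, which I would read off from the trajectory in the $l_s$-system.

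First I would rule out that $u(r,d)$ is a \fdsol-solution. By definition $\hat a^u_j(\tau)$ is the arc of $W^{u,+}_{l_u}(\tau)$ with parameter $d\in(d_{*,j-1},d^*_j)$, i.e.\ strictly after the last intersection of $\Gamma^{u,+}(\cdot,\tau)$ with $\Gamma^s_{j-1}(\cdot,\tau)$ and strictly before the first intersection with $\Gamma^s_{j}(\cdot,\tau)$. Combining this with the nesting of the regions $\hat F^\uparrow_m(\tau)$, $\hat F^\downarrow_m(\tau)$ established in Lemma~\ref{intersezioni} (recall $|\hat L^\downarrow_m|<|\hat L^\uparrow_m|<|\hat L^\downarrow_{m+2}|$), the arc $\hat a^u_j(\tau)$ lies in the annular sector between the $(j-1)$-th and $j$-th stable spirals and meets no branch $\Gamma^s_m$ at all; hence $\bs{\bar Q}\notin\tilde W^s_{l_u}(\tau)$. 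By the characterization~\eqref{WueWs} and Lemma~\ref{corrispondenze.bis} this means precisely that the trajectory does not converge to the origin as $t\to+\infty$.

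Next I would show boundedness and convergence to $\pm\bs P$. By Remark~\ref{key} the orbit $\Omega(t,\tau,\bar\Omega)$ stays in $\hat a^u_j(t)\subset\overline{\hat k_j(t)}$ for every $t\ge\mathfrak T$. Passing to the $l_s$-radial coordinate, the bounding stable arcs $\hat B^s_{j-1}(t)$, $\hat B^s_{j}(t)$ converge, by Remark~\ref{allinfinito} (which applies under $\Gspiu$), to the corresponding bounded arcs of the autonomous stable manifold $M^s$, so $\overline{\hat k_j(t)}$ remains inside a fixed bounded region enclosing $\pm\bs P$; thus the $l_s$-trajectory is bounded for $t\ge\tau$. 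Since the limiting autonomous system has $l_s>2^*$ and $K>0$, the points $\pm\bs P$ are asymptotically stable, and Remarks~\ref{sing} and~\ref{differenze1.5} force a bounded trajectory to tend to $\bs P$, $-\bs P$, or the origin. The origin being excluded by the previous paragraph, we conclude $\bs x_{l_s}(t)\to\pm\bs P$, that is $\lir u(r)\,r^{n-2-\kappa(\beta)}=\pm\infty$, so $u(r,d)$ is a \sdsol-solution.

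Finally, for the zero count, since the orbit settles at $\pm\bs P$ rather than returning to the origin, its total angular displacement is finite. Using the clockwise monotonicity of the $y$-axis crossings (Remark~\ref{ws} together with~\eqref{flowyaxis}) and repeating the angle bookkeeping of Lemma~\ref{gsfd}, now with terminal angle $\arg(\pm\bs P)$ in place of the fast-decay direction $-\arctan(n-2-\kappa(\beta))$, one checks that the limit directions lie in the same quadrant (namely $(-\pi/2,0)$ for $\bs P$ and $(-3\pi/2,-\pi)$ for $-\bs P$), so that $\bs x_{l_u}(t,\tau,\bs{\bar Q})$ crosses the $y$-axis exactly $j$ times; hence $u(r,d)$ is a \sol Rjs. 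In particular for $j=0$, i.e.\ $0<d<d^*_0$, the solution has no zeros, is positive near $r=0$, and converges to $\bs P$ with $P_x>0$, so it is positive for all $r$. I expect the convergence-to-$\pm\bs P$ step to be the main obstacle: one must simultaneously secure uniform boundedness of the orbit as $t\to+\infty$, through the trapping region $\hat k_j$ and its convergence to the autonomous picture, and exclude the degenerate limit at the origin, through the avoidance of the stable manifold built into the definition of $\hat a^u_j$.
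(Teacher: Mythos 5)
Your proposal is correct and follows essentially the same route as the paper's own proof: trap the orbit in $\hat{a}^u_j(t)\subset\overline{\hat{k}_j(t)}$ via Remark~\ref{key}, use the convergence of these regions (in the $l_s$-scaled coordinates, via Remark~\ref{allinfinito}) to the bounded autonomous region containing $\bs{P^\pm}$, conclude convergence to $\pm\bs{P}$ and hence slow decay, and count the zeros by the angular displacement exactly as in Lemma~\ref{gsfd}. Your explicit exclusion of convergence to the origin (through the fact that $\hat{a}^u_j(\tau)$ avoids every stable branch $\Gamma^s_m$) and your explicit appeal to the trichotomy of Remark~\ref{sing} merely spell out what the paper compresses into the assertion that $\bs{P^+}$ (or $\bs{P^-}$) is the unique attracting subset of $\tilde{k}_j(+\infty)$.
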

\begin{proof}
From Lemma~\ref{wu.hardy}, we know that
$\hat{A}^u_j(\mathfrak T) \in \{(\Theta, R) \mid |\Theta| < \frac{\pi}{2} \}$.
Moreover $\hat{A}^u_j(\tau)$ is a path connecting
the point $\Omega_a(\tau)=(\Theta^u(\tau),0)$ to $\Omega^{*,+}_j(\tau)$,
 where $\Theta^u(\tau)=\arctan(m^u(\tau)) \in (-\arctan \frac{n-2}{2},\frac{\pi}{2})$.
From Lemma~\ref{spiral}
we see that $\hat{B}^s_j(\tau)$ is a path connecting
a point $\Omega^j_b(\tau)=(\Theta^j(\tau),0)$ to $\Omega^{*,+}_j(\tau)$, where
$\Theta^j(\tau)+j\pi =\arctan(m^s(\tau)) \to -\arctan(n-2-\kappa(\beta))$ as $\tau \to +\infty$,
and $|\Theta^j(\tau)+j\pi| <\frac{\pi}{2} $.

Let $\hat{\Omega}(t)=(\phi(t),\hat{\rho}(t))=\Omega(t,\tau,\bar{\Omega})$ be the switched polar coordinates of $\boldsymbol{x_{l_u}}(t,\tau, \Q)$ and
$\tilde{\Omega}(t)=(\phi(t),\tilde{\rho}(t))$ the switched polar coordinates of the trajectory
$\boldsymbol{x_{l_s}}(t,\tau, \Q \eu^{(\alpha_{l_s}-\alpha_{l_u})t })$ corresponding to the same solution $u(r)$ of~\eqref{hardy},
so that $\tilde{\rho}(t)=\hat{\rho}(t)\eu^{(\alpha_{l_s}-\alpha_{l_u})t }$.

Let us denote by $\tilde{A}^u_{j}(\tau):= \{ (\Theta, R \eu^{(\alpha_{l_s}-\alpha_{l_u})\tau }) \mid
(\Theta, R) \in \hat{A}^u_{j}(\tau) \}$, $\tilde{B}^s_{j}(\tau):= \{ (\Theta, R \eu^{(\alpha_{l_s}-\alpha_{l_u})\tau } \mid
(\Theta, R) \in \hat{B}^s_{j}(\tau) \}$, and similarly for $\tilde{a}^u_{j}(\tau)$, $\tilde{E}_{j}(\tau)$, $\tilde{k}_{j}(\tau)$.
By construction $\hat{\Omega}(t) \in \hat{A}^u_{j}(t), \hat{B}^s_{j}(t),
\hat{a}^u_{j}(t)$, $\hat{E}_{j}(t)$, $\hat{k}_{j}(t)$, iff $\tilde{\Omega}(t) \in \tilde{A}^u_{j}(t), \tilde{B}^s_{j}(t),
\tilde{a}^u_{j}(t)$, $\tilde{E}_{j}(t)$, $\tilde{k}_{j}(t)$.

Denote by  $\tilde{\Gamma}^{s,\pm}_{l_s}(\sigma,+\infty)$  the switched polar coordinates for $\Sigma^{s,\pm}_{l_s}(\sigma,+\infty)$, and set
$$
\begin{array}{l}
\tilde{\Gamma}^{s}_{2k,l_s}(\sigma,+\infty)=\{\tilde{\Omega}-(2k \pi ,0) \mid \tilde{\Omega} \in \tilde{\Gamma}^{s,+}_{l_s}(\sigma,+\infty) \} \, ,\\
\tilde{\Gamma}^{s}_{2k+1,l_s}(\sigma,+\infty)=\{\tilde{\Omega}-(2k \pi ,0) \mid \tilde{\Omega} \in \tilde{\Gamma}^{s,-}_{l_s}(\sigma,+\infty) \}\, .
\end{array}
$$

We define
 $\tilde{K}_{j}(+\infty)$ as the unbounded open stripe between $\tilde{\Gamma}^{s}_{j,l_s}(\sigma,+\infty)$ and $\tilde{\Gamma}^{s}_{j-1,l_s}(\sigma,+\infty)$.
 Observe further that $\tilde{K}_{j}(+\infty)$ corresponds to the unbounded open subset between
 $W^{s,+}_{l_s}(+\infty)$ and $W^{s,-}_{l_s}(+\infty)$ for~\eqref{si.hardy}, the one containing
 $\boldsymbol{P^+}$ if $j$ is even and the one the one containing
 $\boldsymbol{P^-}$ if $j$ is odd.

 Note that $\tilde{k}_{j}(\tau)$  as $\tau \to +\infty$ approaches    a bounded open subset of
 $\tilde{K}_{j}(+\infty)$, say $\tilde{k}_j(+\infty)$ containing  the switched polar coordinates either of $\boldsymbol{P^+}$ if $j$ is even
 or of $\boldsymbol{P^-}$ if $j$ is odd;
similarly $\tilde{a}^u_{j}(\tau)$
approaches
 a subset of $\tilde{K}_{j}(+\infty)$   as $\tau \to +\infty$.
  Moreover note that $\boldsymbol{P^+}$ (or $\boldsymbol{P^-}$)
  is the unique attracting subset of
 $\tilde{k}_{j}(+\infty)$.  Let $(\Theta^{\pm},R^{\pm})$ be switched  polar coordinates for $\boldsymbol{P^\pm}$.

 Then observe that
 if $\bar{\Omega} \in \hat{k}_{2k}(\tau)$ (respectively $\bar{\Omega} \in \hat{k}_{2k+1}(\tau)$)
  then $\tilde{\Omega}(\tau) \in \tilde{k}_{2k}(\tau)$ (resp. $\tilde{\Omega}(\tau) \in \tilde{k}_{2k+1}(\tau)$) and
 $\tilde{\Omega}(t)$ converges to $(\Theta^{+}-2k \pi,R^{+})$ (resp. $(\Theta^{-}-2k \pi,R^{-})$) as $t \to +\infty$.
 Moreover if $\bar{\Omega} \in \hat{a}^u_{2k}(\tau)$ (respectively $\bar{\Omega} \in \hat{a}^u_{2k+1}(\tau)$) then
 $\hat{\Omega}(t) \to (-\arctan(\kappa(\eta)),0)$ as $t \to -\infty$, and again
 $\tilde{\Omega}(\tau) \in \tilde{a}^u_{2k}(\tau)$ (resp. $\tilde{\Omega}(\tau) \in \tilde{a}^u_{2k+1}(\tau)$),
   and
 $\tilde{\Omega}(t)$ converges to $(\Theta^{+}-2k \pi,R^{+})$ (resp. $(\Theta^{-}-2k \pi,R^{-})$) as $t \to +\infty$;
 hence the corresponding solution $u(r)$ of~\eqref{hardy} is a \sol R{2k}s (resp. a \sol R{2k+1}s) which is definitively positive (resp. negative).
\end{proof}

We stress that by construction we find  $a_j^+=d_{*,j-1}$,
where $(a^+_k)_{k\geq 1}$ is the sequence in the statement of Theorem~\ref{tm3}.
Lemmas~\ref{gsfd} and~\ref{regularslow} can be reformulated also for \Rsol-solutions $u(d,r)$ with $d<0$, reasoning  similarly on the curve $\Gamma^{u,-}(\cdot,\tau)$. So the proof of Theorem~\ref{tm3} is concluded.

\section{Some further examples.}\label{MAP}

In this section we briefly present other types of nonlinearities to which our theorems apply.
Let us begin by noticing that when
$f$ is as in
 (\ref{unaeffe}a), arguing as in Section~\ref{secfow}, we can choose
$l_u$ and $l_s$ as in Corollary \ref{HC},  and we get
  $$g_{l_u}(x,t)=[K(0)+\Delta_u(-t)] |x|^{q-2}x \, , \quad g_{l_s}(x,t)= [K(\infty)+\Delta_s(t)] |x|^{q-2}x \, , $$
  where $K(0)<0<K(\infty)$, and
  $\Delta_u(T)$ and $\Delta_s(T)$ go to $0$ as $T \to +\infty$.
  In fact we can also consider logarithmic growth, e.g.
  \begin{equation}\label{loga}
   \begin{array}{cc}
    K(r)\sim K(0)|\ln(r)|^{a_0}r^{\delta_0} & \textrm{as } r \to 0 \, ,
    \end{array}
\end{equation}
where $K(0)<0$, $a_0 \in \RR$, $\delta_0>-2$. In this case assumption $\Gupiu$ does not hold
but $\Gumeno$ holds for any $l_u \in (l(q,\delta_0), {\rm I}(\eta))$, see \eqref{elleUeS}. So if \textbf{K} holds with
the first in \eqref{Kasympt} replaced by \eqref{loga} we can apply Corollary \ref{HC}.
Similarly if $f$ is as in (\ref{unaeffe}b) and \textbf{K} holds with
the second in \eqref{Kasympt} replaced by    $ K(r)\sim K(\infty)|\ln(r)|^{a_{\infty}}r^{\delta_{\infty}}$, $K(\infty)>0$,
then $\Gsmeno$ holds for any $l_s \in (2_*(\beta), l(q,\delta_{\infty}))$, so we can apply Corollary \ref{HD}.

Introduce
\begin{equation}\label{fi}
f_i(u,r)= \sum_{j=1}^{N_i} K_{i,j}(r) r^{\delta_{i,j}} |u|^{q_{i,j}-2}u
\end{equation}
for a certain integer $N_i$, where $\delta_{i,j}>-2$, $q_{i,j}>2$, $K_{i,j}$ are continuous functions which are bounded and uniformly far from zero for $r$ small and $r$ large, negative near zero, changes sign at $R_{i,j}>0$ and then they are positive. We assume
 $$
 \ell_i = \ell_{i,j} = l(q_{i,j},\delta_{i,j})\,, \quad \text{for every } j\,,
 $$
 thus having
 $$
 g_{l_i}(x,t)=\sum_{j=1}^{N_i}  K_{i,j}(\eu^t)|x|^{q_{i,j}-2}x\,.
 $$
Moreover it is possible to assume that some of the $K_{i,j}$ are identically zero for either $r\leq R_{i,j}$ or $r\geq R_{i,j}$. We do to not enter in  details for major clarity. Assume now
\begin{equation}\label{fi2}
f(u,r) = \sum_{i=1}^{m} f_i(u,r) \,, \quad \ell_1 > \ell_2 > \cdots > \ell_{m-1} > \ell_m \,.
\end{equation}
Setting $l_u=\ell_1$
and remembering that, for $L\neq l$, one has $g_L(x,t)/x = g_l(\xi,t)/\xi$, where $ \xi= x \eu^{(\alpha_l-\alpha_L)t} $, a computation gives the validity of $\Gumeno$ if $l_u<{\rm I}(\eta)$. The first condition of $\Lu$ holds simply setting $\mathfrak T = \min \{R_{i,j}\}$, while we have to assume
\begin{equation}\label{luhold}
 \text{if }  R_{i_o,j_o} = \mathfrak T  \quad \text{ then } \quad q_{i_o,j_o} \geq q_{i,j} \text{ for every } (i,j)\neq(i_o,j_o)
\end{equation}
in order to fulfill the second one. Roughly speaking, the term with the greatest power is the first to change sign.

Assumption $\Gspiu$ holds setting $l_s=\ell_m$. Hence, Theorem~\ref{tm3} applies if $f$ is as in~\eqref{fi2} with $ 2^*<\ell_m < \cdots < \ell_1<{\rm I}(\eta)$ and such that~\eqref{luhold} holds.

\medbreak

Arguing as above we can find the corresponding specular conditions in order to permit the application of Theorem~\ref{tm4}. Assume $f(u,r)= - \sum_{i=1}^{m} f_i(u,r)$ with $f_i$ as in~\eqref{fi} assuming now $\ell_1 < \ell_2 < \cdots < \ell_{m-1} < \ell_m$.
Setting $l_s=\ell_1>2_*(\beta)$ we have $\Gsmeno$, then set $\mathfrak T=\max\{R_{i,j}\}$ and assume~\eqref{luhold} so that $\Ls$ is given. The validity of $\Gupiu$ is given setting $l_u=\ell_m$, asking $2_*(\eta)<\ell_m<2^*$.

\bigbreak

The functions previously considered consist of sum of possibly different polynomial terms. However, our results permit us to consider also
more general nonlinearities, which however have a leading term in their expansion for $u$ small and $u$ large which is polynomial,
e.g.
$$
f(u,r)= \frac{|u|^{q_1-2} u}{1+u^{q_2}} \,\cdot\, \frac{r^{\delta_1} }{1+r^{\delta_2}}\,.
$$
assuming $q_1>2$, $q_1-q_2>2$ and $\delta_1>-2$, $\delta_1-\delta_2>-2$.
In such a case, a straightforward computation gives that $\Gupiu$ holds requiring $l_u=l(q_1-q_2,\delta_1)$ and
$\Gspiu$ holds setting $l_s=l(q_1,\delta_1-\delta_2)$.

Further notice that our results are robust. I.e., we have the following.
\begin{remark}\label{robust}
Assume \textbf{H} and consider (for simplicity) the functions $\tilde{f}^{+}(u,r)$,
$\tilde{f}^{-}(u,r)$  satisfying the assumption of Corollary \ref{HC}, and \ref{HD}
respectively.

Let
$\bar{f}(u,r)$ be such that $\bar{f}(0,r)=\frac{\partial \bar{f}}{\partial u}(0,r)=0$; suppose that there is $C>1$
such that $\bar{f}(u,r) \equiv 0$ for $r<1/C$ and for $r>1/C$, and that $\lim_{|u| \to +\infty} \frac{\bar{f}(u,r)}{|u|^{q-1}}=0$
uniformly for $r>0$.

Then the function $f(u,r)=\tilde{f}^+(u,r)+\bar{f}(u,r)$ satisfies the assumptions of Theorem \ref{tm3}, and the solutions
of \eqref{hardy} have the structure described in Corollary \ref{HC} (and Theorem \ref{tm3}).

Similarly   the function $f(u,r)=\tilde{f}^-(u,r)+\bar{f}(u,r)$ satisfies the assumptions of Theorem \ref{tm4}, and the solutions
of \eqref{hardy} have the structure described in Corollary \ref{HD} (and Theorem \ref{tm4}).
\end{remark}

\medbreak

The assumptions on  Hardy potentials $h(r)$ are more clear so we
  just emphasize the following interesting example satisfying \textbf{H}:
$$ \begin{array}{r}
\ds     u'' +\frac{n-1}{r}u' + \frac{C}{1+r^2} u +f(u,r)=0\,,\qquad \text{with } C<\frac{(n-2)^2}{4}\,, \\[2mm]
  \ds  \textrm{i.e.  }\, h(r)=  \frac{C r^2}{1+r^2}, \quad  \text{with } \eta=0\,, \text{ and } \beta=C \,.
   \end{array}
$$

\appendix
\section{Appendix}
\subsection{On the lack of continuability}\label{app1}

In this appendix we  first review briefly some well known facts concerning exponential dichotomy, see, e.g., \cite{Co78}.
Then we develop
   the construction
of stable and unstable manifolds  for non-autonomous systems, i.e. $W^u_{l_u}(\tau)$ and $W^s_{l_s}(\tau)$, when continuability of the trajectories of~\eqref{si.hardy} is ensured, i.e.
when hypothesis \textbf{C} holds. Then we extend our discussion to the case where
\textbf{C} does not hold.

\medbreak

Denote by $\mathcal{A}_l(t)=\left( \begin{array}{cc} \alpha_{l} &
1
\\ -h(\eu^t) & \gamma_{l}
\end{array} \right)$ the linearization of the right hand side of~\eqref{si.hardy} in the origin, and by $\mathcal{A}_l(\pm\infty)=
\lim_{t \to \pm \infty} \mathcal{A}_l(t)$.
 Assume either $\Hupiu$ or  $\Humeno$:
note that $\mathcal{A}_{l_u}(-\infty)$ has $\lambda_2<0<\lambda_1$
as eigenvalues where $\lambda_1:=\alpha_{l_u}-\kappa(\eta)$ and $\lambda_2:=\alpha_{l_u}+2-n+\kappa(\eta)$.  By \textbf{H},
 $\mathcal{A}_{l_u}(t)$ can be seen as
an $L^1$ perturbation of $\mathcal{A}_{l_u}(-\infty)$, therefore it admits exponential
dichotomy in  negative semi-lines $(-\infty,\tau]$. More precisely let $X(t)$ be the
fundamental matrix of
\begin{equation}\label{eqmatriciale}
\dot{x}=\mathcal{A}_{l_u}(t) x \, ,
\end{equation} i.e.
the matrix solution of~\eqref{eqmatriciale} such that $X(0)=I$, where $I$ denotes the identity matrix.
Then,
for any $\tau \in \RR$ there
is a  constant $K=K(\tau)>1$, exponents $\bar{\la}_2<0<\bar{\la}_1$ and a projection
$\mathcal P^-$ such that
\begin{equation}\label{dichotu}
\begin{array}{cc}
\|X(t)(I-\mathcal P^-) X(s)^{-1} \| \le K \eu^{\bar{\la}_1 (t-s)} & \textrm{ for any $t<s<\tau$} \,,\\
\|X(t)\mathcal P^- X(s)^{-1} \| \le K \eu^{\bar{\la}_2 (t-s)}  & \textrm{ for any $s<t<\tau$}\,,
\end{array}
\end{equation}
see, e.g.,~\cite[Section 4]{Co78}. Moreover the optimal choice for
$\bar{\la}_i$ is $\bar{\la}_i=\la_i$ for $i=1,2$, see~\cite[Appendix]{CF}.
Let us denote by $\mathcal  P^-(\tau):=X(\tau)\mathcal P^- X(\tau)^{-1}$, and by
 $\ell^u(\tau)$ the $1$-dimensional  kernel of $\mathcal P^-(\tau)$;
 then $\ell^u(\tau)$ is the unstable space for~\eqref{eqmatriciale}. I.e
 let  $\vec{\xi} \in \RR^2$, and denote by $\vec{\xi}(t)$ the solution of~\eqref{eqmatriciale}
 such that $\vec{\xi}(\tau)=\vec{\xi}$; then
$\vec{\xi}(t)$ is bounded for $t \le 0$ iff
 $\vec{\xi} \in \ell^u(\tau)$, cf.~\cite[Section 4]{Co78}. Since  $\ell^u(\tau)$ is $1$-dimensional
we see that there is $c=c(\vec{\xi})$ such that $\|\vec{\xi}(t)\| \eu^{-\la_1 t} \to c$ as $t \to -\infty$.
Also note that by construction $\ell^u(\tau)$ is a line, and $\vec{\xi} \in \ell^u(\tau)$ iff
$\vec{\xi}(t) \in \ell^u(t)$.

Now assume $\Humeno$ and consider~\eqref{si.hardy} where $l=l_u$: we consider this problem as
a nonlinear perturbation of~\eqref{eqmatriciale}. Thus, setting
$Q(\delta)=\{(x,y) \mid |x|\le \delta , \; |y| \le \delta \}$,
we get the following, see~\cite[Theorem 2.25]{Jsell}.

\begin{lemma}\label{loc}
Assume $\Humeno$; then for any $N \in \RR$ we can find $\delta=\delta(N)$ such that the set
\begin{equation}\label{Wubarloc}
\begin{array}{r}
   W^u_{l_u,loc}(\tau):= \Big\{ \Q \in Q(\delta) \mid  \ds\boldsymbol{x_{l_u}}(t,\tau;\Q) \in Q(\delta)  \; \textrm{for any $t \le \tau$,} \\
    \text{ and } \ds \; \lito \boldsymbol{x_{l_u}}(t,\tau;\Q) = (0,0) \Big\}
\end{array}
\end{equation}
is a graph on  $\ell^u(\tau) \cap Q(\delta)$ for any $\tau \le N$. Moreover  $\ell^u(\tau)$
is the tangent space to $W^u_{l_u,loc}(\tau)$ in the origin.
\end{lemma}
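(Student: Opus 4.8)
The statement is a local unstable-manifold theorem for the non-autonomous system~\eqref{si.hardy} with $l=l_u$, and it follows from the general theory in~\cite[Theorem 2.25]{Jsell}; I sketch the Lyapunov--Perron construction adapted to the present notation. The plan is to write~\eqref{si.hardy} as a perturbation of the linear equation~\eqref{eqmatriciale}, namely $\dot{x}=\mathcal{A}_{l_u}(t)x+G(x,t)$ with $G(x,t)=(0,-g_{l_u}(x,t))^T$, and to exploit that by $\Humeno$ one has $G(0,t)=0$ and $\partial_x G(0,t)=0$, so that for every $\ep>0$ there is $\delta>0$ with $\|G(x,t)-G(\tilde x,t)\|\le \ep\|x-\tilde x\|$ for all $x,\tilde x\in Q(\delta)$ and all $t\le N$; here the continuity of $g_{l_u}$ in $x$ uniform in $t\le \tau$ is exactly what makes this Lipschitz bound uniform in $t$. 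Since a dichotomy on $(-\infty,N]$ restricts to one on $(-\infty,\tau]$ with the same rates $\la_2<0<\la_1$ and a constant $K\le K(N)$ for every $\tau\le N$, all the estimates below are uniform for $\tau\le N$, which is what forces $\delta=\delta(N)$ rather than $\delta=\delta(\tau)$.

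Writing $U(t,s)=X(t)X(s)^{-1}$ and $\mathcal{P}^-(s)=X(s)\mathcal{P}^-X(s)^{-1}$, I would introduce, for each $\xi\in\ell^u(\tau)=\ker\mathcal{P}^-(\tau)$, the operator
\begin{equation*}
(\mathcal{T}_\xi x)(t)=U(t,\tau)\xi-\int_t^\tau U(t,s)(I-\mathcal{P}^-(s))G(x(s),s)\,ds+\int_{-\infty}^t U(t,s)\mathcal{P}^-(s)G(x(s),s)\,ds
\end{equation*}
acting on the weighted space $\mathcal{B}_\nu=\{x\in C((-\infty,\tau],\RR^2):\ \|x\|_\nu:=\sup_{t\le\tau}\eu^{-\nu t}\|x(t)\|<\infty\}$ with a fixed $\nu\in(0,\la_1)$. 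The point of splitting the forcing by $I-\mathcal{P}^-(s)$ and $\mathcal{P}^-(s)$ and integrating them respectively from $t$ to $\tau$ and from $-\infty$ to $t$ is precisely that the dichotomy bounds~\eqref{dichotu} then apply with the correct sign of $t-s$ in each integral; this makes both integrals converge and yields, for $x,\tilde x$ in a small ball of $\mathcal{B}_\nu$, a self-map bound together with the Lipschitz estimate $\|\mathcal{T}_\xi x-\mathcal{T}_\xi\tilde x\|_\nu\le K\ep\big(\tfrac{1}{\la_1-\nu}+\tfrac{1}{\nu-\la_2}\big)\|x-\tilde x\|_\nu$. Choosing $\delta$ (hence $\ep$) small enough that $K\ep(\tfrac{1}{\la_1-\nu}+\tfrac{1}{\nu-\la_2})<1$, the Banach fixed point theorem produces, for every sufficiently small $\xi\in\ell^u(\tau)$, a unique fixed point $x(\cdot\,;\xi,\tau)\in\mathcal{B}_\nu$ of $\mathcal{T}_\xi$ staying in $Q(\delta)$.

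A standard verification then identifies these fixed points with the solutions described in~\eqref{Wubarloc}: one checks that $x(\cdot\,;\xi,\tau)$ solves~\eqref{si.hardy}, remains in $Q(\delta)$, and satisfies $\lito \boldsymbol{x_{l_u}}(t,\tau;x(\tau;\xi,\tau))=(0,0)$ because $\nu>0$; conversely, any trajectory that stays in $Q(\delta)$ for $t\le\tau$ and tends to the origin is bounded backward, hence lies in $\mathcal{B}_\nu$ and must coincide with $\mathcal{T}_\xi$ applied to $\xi=(I-\mathcal{P}^-(\tau))x(\tau)$, since its stable homogeneous component is forced to vanish (otherwise it would blow up as $t\to-\infty$). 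Evaluating at $t=\tau$, the unstable integral vanishes, so $(I-\mathcal{P}^-(\tau))x(\tau;\xi,\tau)=\xi$ while $\mathcal{P}^-(\tau)x(\tau;\xi,\tau)=\int_{-\infty}^\tau U(\tau,s)\mathcal{P}^-(s)G(x(s),s)\,ds=:h_\tau(\xi)$ depends only on $\xi$; this exhibits $W^u_{l_u,loc}(\tau)$ as the graph $\xi\mapsto \xi+h_\tau(\xi)$ over $\ell^u(\tau)\cap Q(\delta)$. Finally $h_\tau(0)=0$ (as $x\equiv 0$ is the fixed point for $\xi=0$) and, differentiating in $\xi$ at $0$ and using $\partial_x G(0,t)=0$, one obtains $Dh_\tau(0)=0$, i.e.\ $\ell^u(\tau)$ is the tangent space at the origin.

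The main obstacle I anticipate is not the fixed point argument itself but securing all the quantitative inputs uniformly in $\tau\le N$: one must check that the Lipschitz constant $\ep=\ep(\delta)$ of $G$ is uniform for $t\le N$ (this is where the uniform-in-$t$ continuity built into $\Humeno$ is essential) and that the dichotomy constant and the rates $\la_1,\la_2$ do not deteriorate as $\tau$ ranges in $(-\infty,N]$. By contrast, the lack of global continuability plays no role here, since inside $Q(\delta)$ the relevant trajectories are bounded for $t\le\tau$ and hence automatically continuable backward; this is exactly why only a \emph{local} statement can be asserted at this stage.
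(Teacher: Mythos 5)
Your proposal is correct and follows essentially the same route as the paper: the paper's own proof is precisely the Lyapunov--Perron/variation-of-constants argument based on the dichotomy estimates~\eqref{dichotu}, citing~\cite[Section 3.3]{Co65} and~\cite[Theorem 2.25]{Jsell}, which you have simply written out in full. The only organizational difference is that the paper first gets a global graph over $\ell^u(\tau)$ under a global Lipschitz hypothesis on $g_{l_u}$ and then localizes by a truncation argument, whereas you run the contraction directly on a small ball of the weighted space using the smallness of the local Lipschitz constant; these are interchangeable variants of the same construction.
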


We sketch the proof for completeness.
Assume $\Humeno$ and suppose first that,
$$|g_{l_u}(x_2,t)-g_{l_u}(x_1,t)| \le c(\tau) |x_2-x_1|  \, , \qquad \textrm{for any $t \le \tau$}$$
for some $c(\tau)>0$ and for  any $x_1,x_2 \in \RR$.
Then, using a variation of constants formula,
 see e.g.~\cite[Section 3.3]{Co65}  or~\cite[Theorem 2.25]{Jsell}, we prove that the set, cf.~\eqref{WueWs},
\begin{equation}\label{charac}
\tilde{W}^u_{l_u}(\tau):=  \Big\{ \Q  \mid   \lito \boldsymbol{x_{l_u}}(t,\tau;\Q) = (0,0) \Big\}
\end{equation}
is a graph on $\ell^u(\tau)$ (globally),
 for any $\tau \in \RR$. Then the proof
follows from a truncation argument.

Using the flow of~\eqref{si.hardy} we get the following.
\begin{lemma}\label{global}
Assume $\Humeno$ and \textbf{C}. Then the set $\tilde{W}^u_{l_u}(\tau)$
characterized as in~\eqref{charac}
 is a $1$-dimensional immersed submanifold having $\ell^u(\tau)$
as tangent space in the origin.
\end{lemma}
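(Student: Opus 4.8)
The plan is to upgrade the purely local description provided by Lemma~\ref{loc} to a global one by transporting the local unstable set along the flow, which is legitimate precisely because \textbf{C} turns the solution operator of~\eqref{si.hardy} into a globally defined diffeomorphism. First I would fix $\tau$ and apply Lemma~\ref{loc} with $N=\tau$, obtaining a single $\delta=\delta(\tau)>0$ for which $W^u_{l_u,loc}(\sigma)$ is, for every $\sigma\le\tau$, the graph of a $C^1$ function over $\ell^u(\sigma)\cap Q(\delta)$; in particular each $W^u_{l_u,loc}(\sigma)$ is an embedded arc through the origin (which is an equilibrium, since $\Humeno$ gives $g_{l_u}(0,t)=\partial_x g_{l_u}(0,t)=0$) with tangent $\ell^u(\sigma)$ there.

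The key step is the identity
\begin{equation}\label{globalunion}
\tilde{W}^u_{l_u}(\tau)=\bigcup_{\sigma\le\tau}\boldsymbol{x_{l_u}}\big(\tau,\sigma,W^u_{l_u,loc}(\sigma)\big).
\end{equation}
The inclusion $\supseteq$ follows from the characterization~\eqref{charac} together with the cocycle property: a point of $W^u_{l_u,loc}(\sigma)$ has backward orbit tending to the origin, and flowing it forward to time $\tau$ does not alter this asymptotics. For $\subseteq$ I would take $\Q\in\tilde{W}^u_{l_u}(\tau)$; since $\boldsymbol{x_{l_u}}(t,\tau,\Q)\to(0,0)$ as $t\to-\infty$, there is $\sigma_0\le\tau$ with $\boldsymbol{x_{l_u}}(t,\tau,\Q)\in Q(\delta)$ for all $t\le\sigma_0$, so $\boldsymbol{x_{l_u}}(\sigma_0,\tau,\Q)\in W^u_{l_u,loc}(\sigma_0)$ and $\Q$ is its forward image. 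The same argument shows the family in~\eqref{globalunion} is nested, namely $\boldsymbol{x_{l_u}}(\tau,\sigma,W^u_{l_u,loc}(\sigma))\subseteq\boldsymbol{x_{l_u}}(\tau,\sigma',W^u_{l_u,loc}(\sigma'))$ whenever $\sigma'\le\sigma\le\tau$, so the union is increasing.

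Finally I would invoke \textbf{C}: for each pair $\sigma\le\tau$ the map $\Q\mapsto\boldsymbol{x_{l_u}}(\tau,\sigma,\Q)$ is a $C^1$ diffeomorphism of $\RR^2$, globally defined by continuability and with inverse $\boldsymbol{x_{l_u}}(\sigma,\tau,\cdot)$, hence it carries the embedded arc $W^u_{l_u,loc}(\sigma)$ to an embedded $C^1$ arc through the origin. Thus $\tilde{W}^u_{l_u}(\tau)$ is an increasing union of embedded arcs all sharing the origin, which assembles into a single $1$-dimensional manifold; since near the origin it coincides with $W^u_{l_u,loc}(\tau)$, its tangent there is $\ell^u(\tau)$, as claimed. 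The main obstacle is that this union need not be embedded in $\RR^2$: as the arc is flowed out it may wind around and approach itself, so the subspace topology can fail to agree with the manifold topology. The honest conclusion is therefore only that $\tilde{W}^u_{l_u}(\tau)$ is an \emph{immersed} submanifold, and the delicate point is to produce the global manifold parametrization — gluing the nested diffeomorphic images via injectivity of the flow — and to verify that the resulting map is an injective immersion rather than an embedding.
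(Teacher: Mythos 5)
Your proposal is correct and follows essentially the same route as the paper: both transport the local unstable manifold of Lemma~\ref{loc} forward by the flow diffeomorphisms (globally defined thanks to \textbf{C}), observe that these images are nested $1$-dimensional submanifolds whose increasing union is exactly the set characterized in~\eqref{charac}, and conclude it is an immersed manifold tangent to $\ell^u(\tau)$ at the origin. Your write-up is in fact more detailed than the paper's, spelling out both inclusions of the identity~\eqref{globalunion} and flagging the embedded-versus-immersed distinction, which the paper only addresses implicitly (cf. Remark~\ref{8shaped}).
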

\begin{proof}
Let us denote by $\Phi_{T,\tau}$ the diffeomorphism induced by the flow of~\eqref{si.hardy}:
i.e. $\Phi_{T,\tau}(\Q)=\boldsymbol{x_{l_u}}(T,\tau;\Q)$.
Then $\Phi_{T,\tau}(W^u_{l_u,loc}(\tau))$ is a $1$-dimensional submanifold for any $\tau,T \in \RR$ and
$\Phi_{T,\tau_1}(W^u_{l_u,loc}(\tau_1)) \supset \Phi_{T,\tau_2}(W^u_{l_u,loc}(\tau_2))$ if $\tau_1< \tau_2$.
Hence we may set $\tilde{W}^u_{l_u}(T):=\bigcup_{\tau\in\RR} \Phi_{T,\tau}(W^u_{l_u,loc}(\tau))$
and we see that $\tilde{W}^u_{l_u}(T)$ is a $1$-dimensional immersed manifold, and by construction it is characterized as in~\eqref{charac}.
\end{proof}

\begin{remark}\label{8shaped}
We stress that $\tilde{W}^u_{l_u}(\tau)$ (and $\tilde{W}^s_{l_s}(\tau)$  constructed below)
 may be not a usual submanifold in the origin: i.e it may be $8$ shaped as in the critical autonomous case, see e.g. Figure~\ref{livelli}.
However it always contains $W^u_{l_u,loc}(\tau)$ (respectively ${W}^s_{l_s, loc}(\tau)$) which is tangent to $\ell^u(\tau)$
(resp. $\ell^s(\tau)$).
\end{remark}

Now we drop the assumption \textbf{C} and we prove Lemma~\ref{corrW}.

\medbreak

\noindent \emph{Proof of Lemma~\ref{corrW}}. \label{proofcorrW}
Fix $\tau \in\RR$; for every $\Q\in\RR^2$ we can introduce
$$
\mathfrak T(\Q,\tau)=\sup \big\{t \mid \boldsymbol{x_{l_u}}(\cdot,\tau,\Q) \text{ is defined in }[\tau, t) \big\} \,.
$$
Then $\lim_{t\to \mathfrak T(\Q,\tau)} |\boldsymbol{x_{l_u}}(t,\tau,\Q)|=+\infty$ if $\mathfrak T(\Q,\tau)<+\infty$. It is easy to verify that $\mathfrak T(\cdot,\tau)$ is lower semicontinuous, i.e.  the sets
$ \{ \Q \in \RR^2 \mid \mathfrak{T}(\Q,\tau) \le \mathfrak t \}$ are closed.
In fact for every $\Q_0\in\RR^2$ and every $\ep >0$ we can find neighbourhoods $\mathcal U$ of $\Q_0$ and $\mathcal V$ of $\boldsymbol{x_{l_u}}(\mathfrak T(\Q_0,\tau) -\ep,\tau,\Q_0)$ such that
for every $\Q\in \mathcal U$ we have $\boldsymbol{x_{l_u}}(\mathfrak T(\Q_0,\tau) -\varepsilon,\tau,\Q)\in \mathcal V$, thus giving
$\mathfrak T(\Q,\tau) >  \mathfrak T(\Q_0,\tau)-\epsilon$ for every $\Q\in\mathcal U$.
Therefore if $\bs{Q_n} \to \Q_0$, then $\liminf_{n \to \infty}
\mathfrak T(\Q_n,\tau) \ge \mathfrak T(\Q_0,\tau)$.

 Then, we consider
\begin{equation}\label{defTu}
\mathfrak{T}^u(\tau):= \inf \big\{ \mathfrak{T}(\Q,\tau) \mid \Q \in W^{u,+}_{l_u,loc}(\tau) \big\}\,.
\end{equation}
Notice that $\mathfrak T ((0,0),\tau)=+\infty$ and that $\mathfrak{T}^u(\tau)$ is increasing by construction. The lower semicontinuity gives us that either one has $\mathfrak{T}^u(\tau)=+\infty$ or the infimum is in fact a minimum, being $W^{u,+}_{l_u,loc}(\tau)$ bounded and $\mathfrak{T}(\Q,\tau) > \tau$. Moreover the subset containing the points $\Q$ which {\em explode to infinity} before a fixed time $\mathfrak t$,
$$
\mathcal X(\mathfrak t,\tau) = \big\{ \Q \in W^{u,+}_{l_u,loc}(\tau) \mid \mathfrak{T}(\Q,\tau) \leq \mathfrak t \big\}\,,
$$
is a closed subset. Conversely $\mathcal W(\mathfrak t,\tau) = W^{u,+}_{l_u,loc}(\tau) \setminus \mathcal X(\mathfrak t,\tau)$ is a relatively open subset
of $W^{u,+}_{l_u,loc}(\tau)$.

If $\mathfrak t<\mathfrak{T}^u(\tau)$, then $\mathcal X(\mathfrak t,\tau)=\emptyset$, so that
$\tilde{W}^{u,+}_{l_u}(\mathfrak t):=\Phi_{\mathfrak t,\tau}W^{u,+}_{l_u,loc}(\tau)$
 is diffeomorph to  $W^{u,+}_{l_u,loc}(\tau)$, and it is a $1$-dimensional manifold with border.
 In fact it is easy to check that
 the map $\Phi_{\tau,T}= \Phi_{T,\tau}^{-1}$ is well defined in an open neighborhood of $\Phi_{T,\tau}W^{u,+}_{l_u,loc}(\tau)$.

t
\begin{figure}[t]
\centerline{\epsfig{file=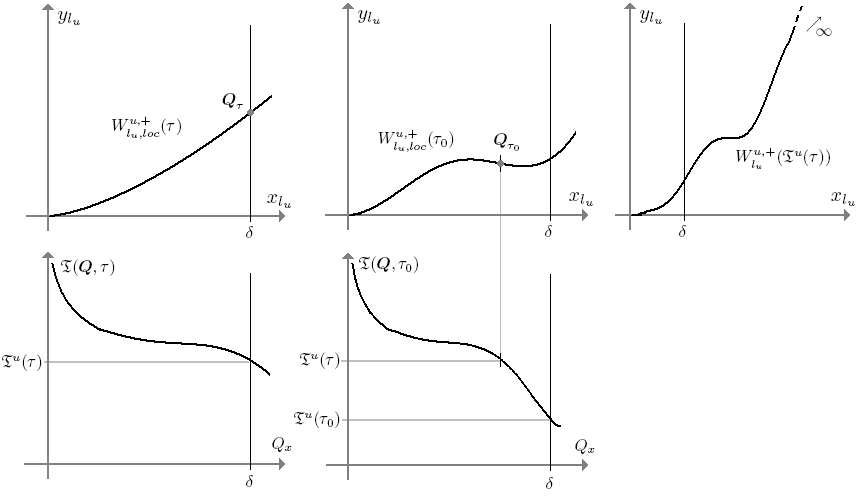, width = 12 cm}}
\caption{Assume that  $\mathfrak{T}(\Q,\tau)$ is continuous and strictly decreasing in $Q_x$. At the time $\tau$ (on the left), the endpoint $\Q_\tau=(\delta,Q_y)$ minimizes $\mathfrak{T}(\cdot,\tau)$ along ${W}^{u,+}_{l_u,loc}(\tau)$, thus having $\mathcal W(\mathfrak{T}^u(\tau),\tau)={W}^{u,+}_{l_u,loc}(\tau)\setminus\{\Q_\tau\}$. If we consider $\tau_0 < \tau$ (at the center), then $\mathfrak{T}^u(\tau_0)<\mathfrak{T}^u(\tau)$ and the set $\mathcal W(\mathfrak{T}^u(\tau),\tau_0)$ consists of the points to the left with respect to $\Q_{\tau_0}=\boldsymbol{x_{l_u}}(\tau_0,\tau,\Q_\tau)$. The images $\Phi_{\mathfrak{T}^u(\tau),\tau} \mathcal W(\mathfrak{T}^u(\tau),\tau)$ and $\Phi_{\mathfrak{T}^u(\tau),\tau_0}\mathcal W(\mathfrak{T}^u(\tau),\tau_0)$ gives us the unbounded $1$-dimensional manifold $W^{u,+}_{l_u}(\mathfrak{T}^u(\tau))$ (on the right).
}
\label{figureWdec}
\end{figure}

 We assume first for illustrative purpose that, \emph{for any $\Q=(Q_x,Q_y)\in W^{u,+}_{l_u,loc}(\tau)$
 the function $\mathfrak{T}(\Q,\tau)$ is strictly decreasing in $Q_x$}:
this is the case, e.g., if~\eqref{si.hardy} is autonomous. This assumption will be removed later on.

If we set $\mathfrak t = \mathfrak{T}^u(\tau)$ we have $\mathcal X(\mathfrak{t},\tau)=\{\Q_{\tau}\}$ where $\Q_{\tau}=(\delta,Q_y)$ is the endpoint of ${W}^{u,+}_{l_u,loc}(\tau)$, while if $\mathfrak t > \mathfrak{T}^u(\tau)$ the sets $\mathcal X(\mathfrak{t},\tau)$, which contains $\Q_\tau$, and $\mathcal W(\mathfrak{t},\tau)$ are connected. In both the cases, the map $\Phi_{\mathfrak{t},\tau}$ is well defined on  $\mathcal W(\mathfrak{t},\tau)$, and
the set $\tilde{W}^{u,+}_{l_u}(\mathfrak{t}):=\Phi_{\mathfrak{t},\tau}\mathcal W(\mathfrak{t},\tau)$ is  diffeomorph  to $\mathcal W(\mathfrak{t},\tau)$.
   In fact there is an open neighborhood of $\tilde{W}^{u,+}_{l_u}(\mathfrak{t})$ which is mapped by the inverse diffeomorphism
   $\Phi_{\tau,\mathfrak{t}}$ into an open neighborhood of $\mathcal W(\mathfrak{t},\tau)$.
   So $\tilde{W}^{u,+}_{l_u,loc}(   \mathfrak{t})\setminus \{(0,0) \}$ is a $1$-dimensional manifold without border, see Figure~\ref{figureWdec}.

      Now let us repeat the discussion replacing $\tau$ by $\tau_0<\tau$.
   It is easy to check that $\Phi_{\mathfrak t,\tau_0}\mathcal W(\mathfrak{t},\tau_0) \supseteq
   \Phi_{\mathfrak t,\tau}\mathcal W(\mathfrak{t},\tau)$,
    and if  $\mathfrak t \ge \mathfrak{T}^u(\tau)$, then
   $\Phi_{\mathfrak t,\tau_0}\mathcal W(\mathfrak{t},\tau_0) =
   \Phi_{\mathfrak t,\tau}\mathcal W(\mathfrak{t},\tau)=\tilde{W}^{u,+}_{l_u}(\mathfrak t)$, and it is unbounded.
   Furthermore by construction
\begin{equation}\label{property}
\tilde{W}^{u,+}_{l_u}(\mathfrak t) = \big\{ \Q \mid \lito \boldsymbol{x_{l_u}}(t,\mathfrak t,\Q)=(0,0) \, ,
\,\, \dot{x}_{l_u}(t,\mathfrak t,\Q)>0 \textrm{ for $t \ll 0$} \big\}\,,
\end{equation}
if $\mathfrak t \ge \mathfrak{T}^u(\tau)$.
Therefore the set
$$
W^{u,+}_{l_u}(T)= \bigcup_{\tau \leq T}  \Phi_{T,\tau}\mathcal W(T,\tau)
$$
is characterized by the property defined in~\eqref{property}.
\begin{figure}[t]
\centerline{\epsfig{file=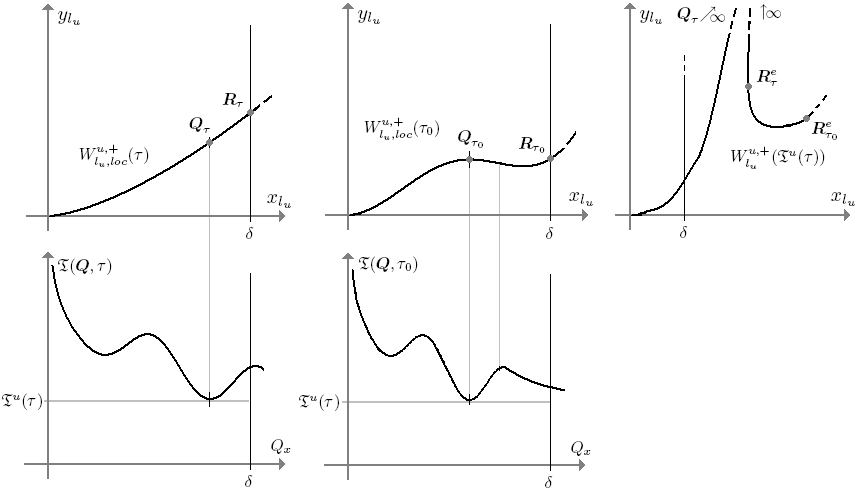, width = 12 cm}}
\caption{If $\mathfrak{T}(\Q,\tau)$ is not decreasing in $Q_x$, at the time $\tau$ (on the left), the minimum $\mathfrak{T}^u(\tau)$ can be attained in a point  $\Q_\tau=(Q_x^\tau,Q_y^\tau)$ with $ Q_x^\tau<\delta$, while we denote by $\R_\tau$ the endpoint of $W^{u,+}_{l_u,loc}(\tau)$. We consider also $\tau_0 < \tau$ (in the center), where we can find the point $\Q_{\tau_0}=\boldsymbol{x_{l_u}}(\tau_0,\tau,\Q_\tau)$ and we denote by $\R_{\tau_0}$ the endpoint of $W^{u,+}_{l_u,loc}(\tau_0)$. In both the situations the sets $\mathcal W(\mathfrak{T}^u(\tau),\tau)$ and $\mathcal W(\mathfrak{T}^u(\tau),\tau_0)$ are disconnected respectively at the point $\Q_\tau$ and $\Q_{\tau_0}$. The images $\Phi_{\mathfrak{T}^u(\tau),\tau} \mathcal W(\mathfrak{T}^u(\tau),\tau)\subset\Phi_{\mathfrak{T}^u(\tau),\tau_0}\mathcal W(\mathfrak{T}^u(\tau),\tau_0)$ gives us two unbounded disconnected sets contained in $\tilde{W}^{u,+}_{l_u}(\mathfrak{T}^u(\tau))$ (on the right). The second components have endpoints respectively $\R_\tau^e = \Phi_{\mathfrak{T}^u(\tau),\tau}(\R_\tau)$ and $ \R_{\tau_0}^e = \Phi_{\mathfrak{T}^u(\tau),\tau_0}(\R_{\tau_0})$; while the point $\Q_\tau$ is, roughly speaking, sent to infinity by the flux $\Phi_{\mathfrak{T}^u(\tau),\tau}$.
}
\label{figureWnondec}
\end{figure}

\begin{figure}[t]
\centerline{\epsfig{file=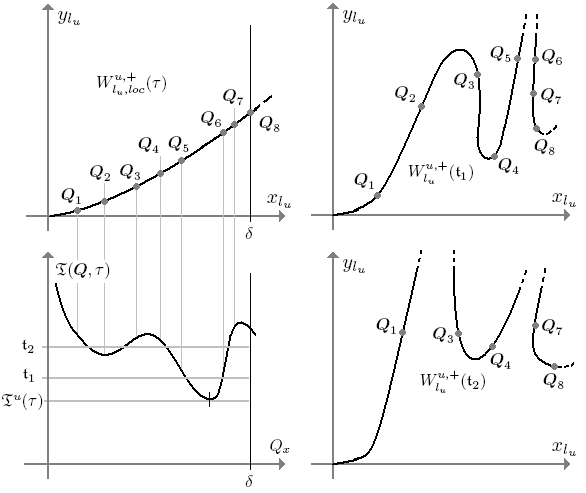, width = 10 cm}}
\caption{
In general $\mathcal W(\mathfrak t,\tau)\subset W^{u,+}_{l_u,loc}(\tau)$ may be disconnected when $\mathfrak t> \mathfrak{T}^u(\tau)$.
The picture sketches an example. Consider $\mathfrak t_2> \mathfrak t_1 >\mathfrak{T}^u(\tau)$, the set $\mathcal W(\mathfrak t_1,\tau)$ has two connected components, while $\mathcal W(\mathfrak t_2,\tau)$ has three components. On the right we have drawn the corresponding images $\Phi_{\mathfrak t_1,\tau}\mathcal W(\mathfrak t_1,\tau)\subset W^{u,+}_{l_u}(\mathfrak t_1)$ and $\Phi_{\mathfrak t_2,\tau}\mathcal W(\mathfrak t_2,\tau)\subset W^{u,+}_{l_u}(\mathfrak t_2)$.
We show how some points $\Q_1,\ldots,\Q_8$, are mapped by the fluxes $\Phi_{\mathfrak t_1,\tau}$ and $\Phi_{\mathfrak t_2,\tau}$, denoting the images again with $\Q_1,\ldots,\Q_8$ for simplicity. In particular, at the time $\mathfrak t_2$ the solution $\boldsymbol{x_{l_u}}(\cdot,\tau,\Q_j)$ is not defined for $j=2,5,6$.
}
\label{figureWdisc}
\end{figure}

If we remove the simplifying assumption   that $\mathfrak{T}(\bs Q ,\tau)$
 is decreasing with respect to $Q_x$,
we can repeat the previous discussion with the following changes.
 The open set $\mathcal W(\mathfrak{T}^u(\tau),\tau)$ can be disconnected, so its image $\Phi_{\mathfrak{T}^u(\tau),\tau}\mathcal W(\mathfrak{T}^u(\tau),\tau)$ may be disconnected too, see Figure~\ref{figureWnondec}. However $\mathcal W(\mathfrak{T}^u(\tau),\tau)$ has a connected component containing the origin, say $\mathcal W_1(\mathfrak{T}^u(\tau),\tau)$, whose image is the connected component of $\Phi_{\mathfrak{T}^u(\tau),\tau}\mathcal W(\mathfrak{T}^u(\tau),\tau)$ containing the origin. Observe that $\mathcal W_1(\mathfrak{T}^u(\tau),\tau)$ is a connected one dimensional manifold, so this property is
 inherited by $\Phi_{\mathfrak{T}^u(\tau),\tau}\mathcal W(\mathfrak{T}^u(\tau),\tau)$ too.

When $\mathfrak t  \ge \mathfrak{T}^u(\tau)$,
the map $\Phi_{\mathfrak{t},\tau}$ is well defined in $\mathcal W(\mathfrak{t},\tau)$ and the set  $\mathcal X(\mathfrak{t},\tau)$ may disconnect $\mathcal W(\mathfrak{T}^u(\tau),\tau)$, see Figure~\ref{figureWdisc}. Repeating the previous arguments we see that the image
$\Phi_{\mathfrak{t},\tau}\mathcal W(\mathfrak{t},\tau)$ is unbounded and may have many components. However, $\mathcal W(\mathfrak{t},\tau)$ has a connected component containing the origin, say $\mathcal W_1(\mathfrak{t},\tau)$ and we can consider the image $W^{u,+}_{l_u}(\mathfrak t):=\Phi_{\mathfrak t,\tau}\mathcal W_1(\mathfrak{t},\tau)$ which is a $1$-dimensional connected manifold and it is unbounded.

Again, cf. Figure~\ref{figureWnondec}, if we switch from $\tau$ to $\tau_0<\tau$ we see that
    $\Phi_{\mathfrak t,\tau_0}\mathcal W_1(\mathfrak{t},\tau_0) \supseteq
   \Phi_{\mathfrak t,\tau}\mathcal W_1(\mathfrak{t},\tau)$,
    and if  $T = \mathfrak{T}^u(\tau)$, then
$$
W^{u,+}_{l_u}(T)=\Phi_{T,\tau}\mathcal W_1(T,\tau)=\Phi_{T,\tau_0}\mathcal W_1(T,\tau_0) \supseteq \Phi_{T,\tau_1} \mathcal W_1(T,\tau_1)
$$
for any $\tau_0<\tau<\tau_1$. Hence, we can define for every $T\in\RR$ the set
$$
W^{u,+}_{l_u}(T)= \bigcup_{\tau \leq T} \Phi_{T,\tau}\mathcal W_1(T,\tau)
$$
which is a $1$-dimensional connected manifold containing the origin in its border, and it is unbounded.
Reasoning in the same way we see that if $\tau_0 < \tau_1$ then
$$\Phi_{T,\tau_0}\mathcal W(T,\tau_0) \supseteq \Phi_{T,\tau_1} \mathcal W(T,\tau_1) \, ,$$
therefore we can define
$$\tilde{W}^{u,+}_{l_u}(T):= \bigcup_{\tau \leq T} \Phi_{T,\tau}\mathcal W(T,\tau) $$
Notice  that $\tilde{W}^{u,+}_{l_u}(T)$ may not be a manifold (it may break infinitely many times),
since $\mathcal W(T,\tau) $ may be disconnected.
However by construction $\tilde{W}^{u,+}_{l_u}(T)$ may be still characterized as
in \eqref{property}; moreover
 $W^{u,+}_{l_u}(T)$ is the connected component of $ \tilde{W}^{u,+}_{l_u}(T)$
containing the origin in its border, and it is, as shown above, a $1$-dimensional connected manifold.

The construction of $W^{u,-}_{l_u}(\tau)$ and of $W^{u}_{l_u}(\tau)=W^{u,-}_{l_u}(\tau) \cup W^{u,+}_{l_u}(\tau)$
  is completely analogous and it is omitted.
  This concludes the  part of the proof of Lemma~\ref{corrW}
concerning the unstable manifolds. The construction of the stable leaves
is very similar and we just  sketch it.

With a specular argument we assume $\Hsmeno$, so that
  $\mathcal{A}_{l_s}(+\infty)$ has $\nu_2<0<\nu_1$
as eigenvalues, where $\nu_1:=\alpha_{l_u}-\kappa(\beta)$ and $\nu_2:=\alpha_{l_s}+2-n+\kappa(\beta)$. So, let
$Y(t)$ be the fundamental matrix of
\eqref{eqmatriciale}, where $\mathcal{A}_{l_u}(t)$ is replaced by $\mathcal{A}_{l_s}(t)$.
Then,
for any $\tau \in \RR$ there
is a  constant $K=K(\tau)>1$,   and a projection
$\mathcal P^+$ such that
\begin{equation}\label{dichots}
\begin{array}{cc}
\|Y(t)(I-\mathcal P^+) Y(s)^{-1} \| \le K \eu^{\nu_1 (t-s)} & \textrm{ for any $s>t>\tau$} \,,\\
\|Y(t)\mathcal P^+ Y(s)^{-1} \| \le K \eu^{\nu_2 (t-s)}  & \textrm{ for any $t>s>\tau$} \,,
\end{array}
\end{equation}
see again~\cite[Section 4]{Co78}, and~\cite[Appendix]{CF}.
Denote by $\mathcal P^+(\tau):=Y(\tau)\mathcal P^+ Y(\tau)^{-1}$, and by
 $\ell^s(\tau)$ the $1$-dimensional  range of $\mathcal P^+(\tau)$.
Then the solution
$\vec{\xi}(t)$ of~\eqref{eqmatriciale}, with $l_s$ replacing $l_u$, is bounded for $t \ge 0$ iff
 $\vec{\xi}(0) \in \ell^s(\tau)$. Moreover $\|\vec{\xi}(t)\| \eu^{-\nu_2 t} \to c$ as $t \to +\infty$
 for a suitable $c>0$.
This way we are able to construct a local manifold $W^s_{l_s,loc}(\tau)$ and
to reprove a result analogous to
Lemma~\ref{loc}. Then, assuming temporarily $\textbf{C}$ and reasoning as in Lemma~\ref{global},
we see that $\Phi_{T,\tau}(W^s_{l_s,loc}(\tau))$ is a $1$-dimensional submanifold for any $\tau,T \in \RR$; moreover
$\Phi_{T,\tau_2}(W^s_{l_s,loc}(\tau_2)) \supset \Phi_{T,\tau_1}(W^s_{l_s,loc}(\tau_1))$ if $\tau_1< \tau_2$.
Hence, assuming $\textbf{C}$ and $\Hsmeno$, we obtain that the set
\begin{equation}\label{characs}
\tilde{W}^s_{l_s}(\tau):=\bigcup_{\tau_0\ge \tau}   \Phi_{\tau,\tau_0}(W^s_{l_s,loc}(\tau_0))
 = \big\{ \Q \mid \lit \boldsymbol{x_{l_s}}(t,\tau,\Q)=(0,0) \big\}
\end{equation}
 is a $1$-dimensional immersed manifold having $\ell^s(\tau)$
as tangent space in the origin.

Then we remove assumption \textbf{C} and, arguing as above, we see that $\tilde{W}^s_{l_s}(\tau)$ may be disconnected, but
its connected component containing the origin, denoted by $W^s_{l_s}(\tau)$, is again a $1$-dimensional manifold.
Then repeating the previous discussion we conclude the proof of Lemma~\ref{corrW}. The part of the proof concerning Lemmas~\ref{corrispondenze} and
 \ref{corrispondenze.bis} is given below. \qed
\medbreak

Now we proceed with the proof of Lemma~\ref{corrispondenze.bis}, which includes   Lemma~\ref{corrispondenze} as  a
particular case. The proof is adapted from \cite[Lemma 2.10]{DF} where it is developed assuming \textbf{C} and $h(r) \equiv 0$.

\medbreak

\noindent  \label{proofcorr}
\emph{Proof of Lemma~\ref{corrispondenze.bis}}\\
 Assume  $\Humeno$ and $\Hsmeno$; recalling that
  $\boldsymbol{x_{l_u}}(t)=(u(\eu^t) \eu^{\alpha_{l_u} t},u'(\eu^t) \eu^{(1+\alpha_{l_u}) t})$, we find
  $\boldsymbol{x_{l_s}}(t) = \boldsymbol{x_{l_u}}(t) \eu^{(\alpha_{l_s}-\alpha_{l_u})t}$.
  Therefore in particular    $\R=\Q \textrm{exp}[ -(\alpha_{l_u}-\alpha_{l_s}) \tau]$.

Assume first \textbf{C} for simplicity.
  From roughness of exponential dichotomy, cf~\cite[Chapter 4]{Co78} and  \cite[Theorem 2.16]{Jsell},  we see that, if $\Q \in W^u_{l_u}(\tau)$, then
  there is $d=d(\Q)$ such that $\lim_{t\to-\infty} \boldsymbol{x_{l_u}}(t,\tau,\Q) \eu^{-[\alpha_{l_u}-\kappa(\eta)]t} =d(1,-\kappa(\eta))$.
  Assume
  $d>0$ for definiteness;
   then for the corresponding solution $u(r)$ of~\eqref{hardy}     we get
  \begin{equation}\label{stimau}
    u(r)={x_{l_u}}(\ln(r),\tau,\Q) r^{-\alpha_{l_u}+\kappa(\eta)} \to d \,, \quad \textrm{as $r \to 0$} \,.
\end{equation}
Assume now $\Q \not\in W^u_{l_u}(\tau)$.
    Then, if $l_u  \ne 2^*$, we find that $|\boldsymbol{x_{l_u}}(t,\tau,\Q)|$ is uniformly positive as $t \to -\infty$, and if $l_u = 2^*$
      there is a sequence $t_n \to -\infty$ such that $|\boldsymbol{x_{l_u}}(t_n,\tau,\Q)|$ is uniformly positive:
    in both the cases   the corresponding solution $u(r)$  of~\eqref{hardy} is not a \Rsol-solution since  $u(r)r^{\alpha_{l_u}-\kappa(\eta)} \not\to 0$
    as $r \to 0$.
    Further we easily see that $\Q \in W^u_{l_u}(\tau) \iff \R \in W^u_{l_s}(\tau) $.

  Arguing similarly, if $\Q \in W^s_{l_s}(\tau)$ then
  there exists $L=L(\Q)>0$ such that $\lim_{t\to+\infty} \boldsymbol{x_{l_s}}(t,\tau,\Q) \eu^{-[\gamma_{l_s}+\kappa(\beta)]t} = L(1,-(n-2)+\kappa(\beta))$:
  hence the corresponding solution  $u(r)$ of~(\ref{eq.na}) satisfies
    \begin{equation}\label{stimas}
 \lim_{t\to+\infty} u(\eu^t)\eu^{(\alpha_{l_s}-\gamma_{l_s}-\kappa(\beta))t}= \lim_{r\to+\infty}
  u(r)r^{n-2-\kappa(\beta)} = L\,.
 \end{equation}
  So we can easily conclude as above.

Hence, if we assume either $\Hupiu$ or $\Humeno$, we can construct
the unstable manifold  $W^u_{l_u}(\tau)$ for any $\tau \in \RR$; similarly if either
$\Hspiu$ or $\Hsmeno$ hold, we can construct the stable manifold  $W^s_{l_s}(\tau)$ for any $\tau \in \RR$.
Moreover Remark~\ref{allinfinito} still holds and we can construct $W^s_{l_u}(\tau)$ and
$W^u_{l_s}(\tau)$ via~\eqref{cambioL} too.

Now we drop assumption \textbf{C}.
In this case, due to the presence of non-continuable trajectories, we need to distinguish between $W^u_{l_u}(\tau)$
and $\tilde{W}^u_{l_u}(\tau)$, and similarly for the other manifolds.
In fact $\bs{x_{l_u}}(t,\tau,\Q) \to (0,0)$ iff $\Q \in \tilde{W}^u_{l_u}(\tau)$. Further for any $\Q \in \tilde{W}^u_{l_u}(\tau)$
we can find $N \gg 1$ such that $\bs{x_{l_u}}(T,\tau,\Q)\in W^u_{l_u}(T)$ for any $T \le -N$. So we can repeat the previous argument and
we see that the corresponding
solution $u(r)$ of \eqref{hardy} is a \Rsol-solution. A similar argument holds for the stable manifold.

So for any $\tau$ we   find that $\Q \in W^u_{l_u}(\tau)$ iff  $\R \in W^u_{l_s}(\tau)$ iff $u(r)=u(r,d)$ is a \Rsol-solution
with $0<d<d^+_\tau$, see \eqref{eqrhod}. Similarly $\Q \in W^s_{l_u}(\tau)$ iff  $\R \in W^s_{l_s}(\tau)$ if $u(r)=u(r,L)$ is a \fdsol-solution
with $0<L<L^+_\tau$.
\qed

\subsection{Proof of Lemmas~\ref{paramreg} and~\ref{thetaphiS}}\label{app2}

We prove now Lemma~\ref{paramreg}: such a result has been obtained in presence of continuability of the solutions and for $h(r) \equiv 0$ in~\cite[Lemma 2.10]{DF}.

\medbreak

\noindent
\emph{Proof of Lemma~\ref{paramreg}.}
We will prove only the first part of the statement, the second follows similarly.
Consider the parametrization $\Sigma^{u,+}_{l_u}(\cdot,T)$.
Assume first~\textbf{C}.
Observe that, starting from $\Sigma^{u,+}_{l_u}(\cdot,T)$, we can construct a parametrization of
 $W^{u}_{l_u}(\tau)$ for any $\tau \in \RR$, by setting $\Sigma^{u,+}_{l_u}(\omega,\tau):= \boldsymbol{x_{l_u}}(\tau;T,\Sigma^{u,+}_{l_u}(\omega,T)) $. In fact, the function
$\Sigma^{u,+}_{l_u}: [0,+\infty) \times \RR \to \RR^2$ is continuous in both the variables,
and the map $(\omega,\tau) \mapsto (\Sigma^{u,+}_{l_u}(\omega,\tau), z(\tau))$ is injective in  $\boldsymbol{W^{u}}$.
According to this parametrization, $\boldsymbol{x_{l_u}}(t;\tau,\Sigma^{u,+}_{l_u}(\omega,\tau))$  coincides with
$\boldsymbol{x_{l_u}}(t;T,\Sigma^{u,+}_{l_u}(\omega,T))$ and
corresponds to the given solution $u(r,d(\omega))$ for any $\tau \in \RR$.

Fix $N\in\RR$ and let $\delta:=\delta(N)$ be the constant defined in Lemma~\ref{loc};
 we can find $\bar\omega>0$ and  $N(\bar{\omega})<N$ such that
$\Sigma^{u,+}_{l_u}(\omega,\tau) \in W^{u,+}_{l_u, \textrm{loc}}(\tau)$, whenever $0\le \omega \le \bar{\omega}$ and $\tau \le N(\bar{\omega})$.

We now show that $d(\omega)$ is strictly increasing.
Once proved this claim for this particular parametrization we have it
for any parametrization $ \varpi \to \Sigma^{u,+}_{l_u}(\varpi,\tau)$ of
$W^{u,+}_{l_u}(\tau)$ as in the assumption of Lemma~\ref{paramreg}, due to the
monotonicity of the change of variables $\varpi(\omega)$.
 Using Lemma~\ref{loc} we see that we can choose $\omega_1<\omega_2$,
 so that
  $\Sigma^{u,+}_{l_u}(\omega_i,\tau) \in W^{u,+}_{l_u, \textrm{loc}}(\tau)$ for any $\tau \le N(\omega_2)$ and for $i=1,2$.
  Hence
  $\Sigma^{u,+}_{l_u}([0,\omega_2]\times\{\tau\})\subset W^{u,+}_{l_u}(\tau)$ is a graph on $\ell^u(\tau)$,
for any $\tau  \le N(\omega_2)$, see Lemma~\ref{loc}.
In particular $x_{l_u}(t; T,\Sigma^{u,+}_{l_u}(\omega_1,T))-x_{l_u}(t; T,\Sigma^{u,+}_{l_u}(\omega_2,T))<0$ for $t=N(\omega_2)$.
We claim that \emph{$W^{u,+}_{l_u,loc}(\tilde{\tau})$ is a graph on a segment of the $x$ axis, for any $\tilde\tau  \le N(\omega_2)$}.
In fact when $\eta=0$ the claim is obvious since $\ell^u(\tau)$ is contained in the $x$ axis.
If $\eta \ne 0$, since $\ell^u(\tau)$ is not orthogonal to the $x$ axis,
possibly choosing a smaller $\delta$   we can again assume that  $W^{u,+}_{l_u,loc}(\tilde{\tau})$
is a graph on the $x$ axis too, so the claim is true.

Assume for contradiction that $d(\omega_1)>d(\omega_2)$,
 then
there is $\tilde{\tau}<N(\omega_2)$ such that
$x_{l_u}(t; T,\Sigma^{u,+}_{l_u}(\omega_1,T))-x_{l_u}(t; T,\Sigma^{u,+}_{l_u}(\omega_2,T))$
is positive
for any $t<\tilde{\tau}$ and it is zero for $t=\tilde{\tau}$. In particular $W^{u,+}_{l_u,loc}(\tilde{\tau})$
is not a graph on the $x$ axis, so we have found a contradiction.  Hence $d(\omega_1)<d(\omega_2)$, and the Lemma is concluded if \textbf{C} holds.
Notice that we can redefine the parametrization and use directly $d$ instead of $\omega$ as parameter, so,
with a little abuse of notation we find the parametrization $\Sigma^{u,+}_{l_u}(d,\tau)$ of  $W^{u,+}_{l_u}(\tau)$ which is continuous (and smooth)
 in both the variables
for any $(d,\tau) \in [0,+\infty) \times \RR$.

Now we drop \textbf{C}. Fix $T \in \RR$, and correspondingly $d^+_T$ as in \eqref{eqrhod},
so that,   for any $d\in(0,d^+_T$),  $u(r,d)$ is continuable for any $0<r< \eu^{T}$.
Using the previous discussion and a truncation argument,
for any $D\in(0,d^+_T)$, we can define the map
$\Sigma^{u,+}_{l_u}(d,T)$ for $d \in [0,D]$;
 so we get a
parametrization of a connected branch of  $W^{u,+}_{l_u}(T)$, say $\bar{W}^{u,+}_{l_u}(T)$.
Since for any point $\Q \in \bar{W}^{u,+}_{l_u}(T)$ we have that $\bs{x_{l_u}}(\tau; T, \Q)$
exists for any $\tau\le T$, arguing as above, we find that
$\Sigma^{u,+}_{l_u}(d,\tau)= \boldsymbol{x_{l_u}}(\tau,T,\Sigma^{u,+}_{l_u}(d,T)) $ is a parametrization of a connected branch of  $W^{u,+}_{l_u}(\tau)$, denoted again by
$\bar{W}^{u,+}_{l_u}(\tau)$, for any $\tau \le T$.

Now let $\tau>T$
and notice that the function $d^+(\tau)=d^+_{\tau}$ defined in \eqref{eqrhod} is decreasing in $\tau$.  If $D < d^+_{\tau}$, reasoning as above,
 we find that $\Sigma^{u,+}_{l_u}(d,\tau)= \boldsymbol{x_{l_u}}(\tau,T,\Sigma^{u,+}_{l_u}(d,T)) $ gives again a parametrization of a connected branch of $W^{u,+}_{l_u}(\tau)$, for $0\le d \le D$.
 If $D \ge d^+_{\tau}$ then  $\Sigma^{u,+}_{l_u}(d,\tau)= \boldsymbol{x_{l_u}}(\tau,T,\Sigma^{u,+}_{l_u}(d,T)) $  for $0< d< d^+_{\tau}$ is unbounded and it is itself a parametrization of
 the whole manifold $W^{u,+}_{l_u}(\tau)$.  Assume $D < d^+_{T}$,
 this way we  obtain  a map $\Sigma^{u,+}_{l_u}(d,\tau)$ which is continuous
 in both the variables for
  $$\bar{E}= \big([0,D) \times (-\infty, T] \cup \{(d,\tau) \mid 0<d<\max\{D,d^+_{T}\} \,, \; \tau \ge T \} \big) .$$
 Further $d \to \Sigma^{u,+}_{l_u}(d,\tau)$ is injective for $(d,\tau) \in  \bar{E}$.
 For the arbitrariness of $D<d^+_T$ we can let $D \to d^+_T$. Then from the arbitrariness
 of $T \in \RR$  we define
   $\Sigma^{u,+}_{l_u}$ in the whole
$E =\{(d,\tau) \mid 0<d< d^+_{\tau} \, , \;  \tau \in \RR \} $,    it is continuous in both the variables and it gives a bijective parametrization
of $W^{u,+}_{l_u}(\tau)$ for any $\tau \in \RR$. \qed

\medbreak

Now we prove Lemma~\ref{thetaphiS}: the argument is a modification of~\cite[Propositions~3.5,~3.8]{DF}.
  In fact, in this setting,
 we need to take into account the fact that $\ell^u(\tau)$ and $\ell^s(\tau)$
change with $\tau$ (due to the presence of Hardy potentials), while in~\cite{DF} there was not this difficulty.
In particular we need to ask for $l_s>2^*$ and to profit of Lemma~\ref{nongira}.

\medbreak

\emph{Proof of Lemma~\ref{thetaphiS}.}
We introduce some
definitions borrowed from~\cite{BDF,Fdie}.
Following~\cite{BDF,Fdie,JK},
given a curve $\boldsymbol{\gamma}:[a,b]\to\RR^2 \setminus \{(0,0)\}$, we define its rotation number $w(\boldsymbol{\gamma})$
by setting
\begin{equation}
\label{Rotnumb}
w(\boldsymbol{\gamma}):=\textrm{Int}\left[\frac{\theta_{\boldsymbol{\gamma}}(b)-\theta_{\boldsymbol{\gamma}}(a)}{2\pi}\right],
\end{equation}
where $\textrm{Int}[\cdot]$ denotes the integer part, and $\boldsymbol{\gamma}(t)=
(\rho_{\boldsymbol{\gamma}}(t)\cos \theta_{\boldsymbol{\gamma}}(t),
\rho_{\boldsymbol{\gamma}}(t)\sin \theta_{\boldsymbol{\gamma}}(t)).$
As pointed out in~\cite{Fdie}, we can extend this definition to a curve $\boldsymbol{\gamma}$ defined in
a semi-open interval $[a,b)$ if $\lim_{t \to b^-} \theta_{\boldsymbol{\gamma}}(t)$ exists (even if it is infinite).

Our argument will be rather sketchy since we just adapt~\cite{DF,Fdie}.
Let $\boldsymbol{\gamma^i}(t):[a,b] \to \RR^2$, for $i=1,2$, be curves in $\RR^2$  which do not
intersect each other, and let
$\phi(t)$ be a smooth monotone
function such as $\varphi(t)=z(t)=\eu^{\varpi t}$ as in~(\ref{si.naa}),
or $\varphi(t)=\zeta(t)=\eu^{-\varpi t}$ as in~(\ref{si.naas}) or
$\varphi(t)=t$ as in~\cite{BDF}.
Then
$\boldsymbol{\Gamma^i}(t)=(\boldsymbol{\gamma^i}(t),\varphi(t))$
 are  curves in $\RR^3$. Following
\cite{BDF}, we call linking number of
$\boldsymbol{\gamma^1},\boldsymbol{\gamma^2}$ in $[a,b]$ the number
$w(\boldsymbol{\gamma^1}-\boldsymbol{\gamma^2})$, i.e. the number of complete rotations
of a curve around the other. Such a quantity is invariant for homotopies in $\RR^3$
which preserve  the endpoints $\boldsymbol{\Gamma^1}(a)=\boldsymbol{\Gamma^2}(a)$
and $\boldsymbol{\Gamma^1}(b)=\boldsymbol{\Gamma^2}(b)$.

 We want to establish an homotopy between two curves so that linking number and rotation number are equal.
Let us fix $\tau \in \RR$ and $\Q\in W^{s,+}_{l_s}(\tau)$.
 Since $\boldsymbol{x_{l_s}}(t,\tau,\Q)$ converges to the origin as $t \to +\infty$ and $\dot{x}_{l_s}(t,\tau,\Q)<0$
 for $t \gg 1$, for every $\delta>0$ we can find $a=a(\delta) \gg 1$ such that  $| \bs{x_{l_s}}(a,\tau,\Q)|= \delta$
 and  $ |\bs{x_{l_s}}(t,\tau,\Q)|> \delta$ for $t \in [\tau, a)$.

 Then we set $\boldsymbol{\gamma^1}(t)=\boldsymbol{x_{l_s}}(t,\tau,\Q)$, where $\Q \in W^{s,+}_{l_s}(\tau)$,
and we consider the trajectory $\boldsymbol{\Gamma^1}(t)=(\boldsymbol{x_{l_s}}(t,\tau,\Q),\zeta(t))$ of~\eqref{si.naa}
for $t\in [\tau,a]$.\\
We shrink further  $\delta\leq \delta(\tau)$   so that the sets $W^s_{l_s, loc}(T)$ defined in Lemma~\ref{loc}
are graphs on $\ell^s(T)$ for any $T \ge \tau$, and we denote by $\boldsymbol{\bar{C}}(T)$
 the unique point in $W^{s,+}_{l_s, loc}(T)\cap \{x=\delta \}$.
 Let $0<L_a(T)<L_b$ be such that $\Upsilon^{s,+}_{l_s}(L_a(T),T)=\boldsymbol{\bar{C}}(T)$ and
 $\Upsilon^{s,+}_{l_s}(L_b,T)=\boldsymbol{x_{l_s}}(T,\tau,\Q)$.
  We consider the curves $\boldsymbol{\Psi_1}(T)=(\boldsymbol{\bar{C}}(T),\zeta(T))$ for $T \in [\tau,a]$,   the curve $\boldsymbol{\Psi_2}(d)=(\Upsilon^{s,+}_{l_s}(d,\tau),\zeta(\tau))$
 for $d \in [L_a(T), L_b]$ and the curve $\boldsymbol{\Gamma^2}(t)$ obtained following the graph of $\boldsymbol{\Psi_1}$ and then the graph of $\boldsymbol{\Psi_2}$.
 An homotopy between $\boldsymbol{\Gamma^1}$ and $\boldsymbol{\Gamma^2}$
 is obtained  by projecting
 $\boldsymbol{\Gamma^1}$ on $W^s_{l_s}(\tau)$ following the $2$-dimensional manifold $\boldsymbol{W^s}$: we sketch the construction,
 see~\cite[Lemma~4.3]{Fdie} for more details.

 For any $T \in [\tau,a]$ we construct the function $H0(\cdot,T)$ obtained following $\boldsymbol{\Psi_1}(S)=(\boldsymbol{\bar{C}}(S),\zeta(S))$ for $S\in [T,a]$,
 then
 $(\Upsilon^{s,+}_{l_s}(d,T),\zeta(T))$
 for $d \in [L_a(T), L_b]$ and  finally $\boldsymbol{\Gamma^1}(t)=(\boldsymbol{x_{l_s}}(t,\tau,\Q),\zeta(t))$ for $t \in [\tau,T]$
i.e.
 $$ H0(S,T)=\left\{  \begin{array}{ll}
 \boldsymbol{\Psi_1}(a+S (T-a)/L_a(T)) & \textrm{if }  S \in [a, L_a(T)] \\
    (\Upsilon^{s,+}_{l_s}(S,T),\zeta(T)) & \textrm{if }  S \in [L_a(T),L_b] \\
 (\boldsymbol{x_{l_s}}(S+(T-L_b)),\tau,\Q),\zeta( T) & \textrm{if }
                        S \in [L_b,\tau-(T-L_b)]
                      \end{array}
  \right. $$
Note that all the curves $S \mapsto H0(S,T)$ have the same endpoints and are homotopic; moreover $H0(\cdot,a)$ is $\boldsymbol{\Gamma^1}$ and
$H0(\cdot,\tau)$ is $\boldsymbol{\Gamma^2}$. So the linking number of
 $\boldsymbol{\Gamma^1}$ and $\boldsymbol{\Gamma^2}$ is $0$.

 Let us denote  by $\theta_{\Upsilon}(\tau,a)$,
 $\theta_{\bs{\bar{C}}}(\tau,a)$, $\theta_{\x}(\tau,a)$ respectively the angles performed by
  by $\Upsilon^{s,+}_{l_s}(L,\tau)$ for $L \in [L_a(T), L_b]$,
  by $\boldsymbol{\bar{C}}(T)$ for $T \in [\tau,a]$, and by
    $\boldsymbol{x_{l_s}}(t,\tau,\Q)$ for $t \in [\tau,a]$.
We have shown that
\begin{equation}\label{sommaangoli}
\theta_{\Upsilon}(\tau,a)+ \theta_{\bs{\bar{C}}}(\tau,a)= \theta_{\x}(\tau,a) \, .
\end{equation}
So Lemma~\ref{thetaphiS} follows from~\eqref{sommaangoli}, being~\eqref{angle} equivalent.
\qed

\subsection{On the Wazewski's principle}\label{app3}

We conclude the appendix  with a result, inspired by Wazewski's
principle, which allows to locate the unstable and the stable manifolds.
Consider
\begin{equation}\label{equazione}
    \dot{x}=F(x,t) \,,
\end{equation}
where $x \in \RR^2$, $t \in \RR$, $F$ continuous, and assume that the origin $\boldsymbol{O}=(0,0)$ is a critical point for~\eqref{equazione}.

Let $\mathcal{T}(\tau)$ be a closed set diffeomorphic to a full triangle.
We call the vertices $\boldsymbol{O}$, $\boldsymbol{A}(\tau)$ and $\boldsymbol{B}(\tau)$, and $o(\tau)$, $a(\tau)$, $b(\tau)$ the edges (without endpoints)
which are opposite to the respective vertex. Let $\mathcal{\hat{T}}(\tau)$ denote a further set diffeomorphic to a full triangle
having $\boldsymbol{O}$ as vertex and
with edges $\hat{a}(\tau)\supset a(\tau)$ and $\hat{b}(\tau)\supset b(\tau)$; it follows
 that $\mathcal{\hat{T}}(\tau)\supset \mathcal{T}(\tau)$.
We begin from a result requiring very weak regularity properties.

\begin{lemma}\label{boxu}
Assume that local uniqueness for the solutions of~\eqref{equazione} is ensured for any
trajectory starting from $\mathcal{T}(\tau) \setminus \{\boldsymbol{O} \}$.

Suppose that the flow on $a(\tau)\cup b(\tau)$ points towards the interior of $\mathcal{T}(\tau)$, and on
$o(\tau)$ points towards the exterior of $\mathcal{T}(\tau)$ for any $t \le \tau$. Assume further that the flow on
 $\{\boldsymbol{A}(\tau), \boldsymbol{B}(\tau) \}$ points towards the interior of $\mathcal{\hat{T}}(\tau)$ for any $t \le \tau$.
 Finally suppose that if a solution $\boldsymbol{x}(t)$ of~\eqref{equazione} satisfies $\boldsymbol{x}(t) \in T(\tau)$ for any $t \le \tau$,
 then $\lito \boldsymbol{x}(t)=\boldsymbol{O}$.

 Then there is a compact connected set $\bar{W}^u(\tau)\subset \mathcal{T}(\tau)$ such that $\boldsymbol{O} \in \bar{W}^u(\tau)$,
 $\bar{W}^u(\tau) \cap o(\tau)  \ne \emptyset$, with the following property:
$$
\bar{W}^u(\tau) \subset  \{ \Q  \mid   \lito \boldsymbol{x}(t,\tau;\Q) = \boldsymbol{O} \, , \;
 \boldsymbol{x}(t,\tau;\Q) \in \mathcal{T}(\tau) \; \textrm{ for any $t \le \tau$} \} \,.
$$
\end{lemma}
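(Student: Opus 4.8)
This is a Ważewski-type retract argument. The plan is to construct the set $\bar{W}^u(\tau)$ as a connected component of the set of points whose backward trajectories remain trapped in the triangle $\mathcal{T}(\tau)$. Let me think through what we need and what tools are available.

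We have a triangle with three edges. The flow conditions say: on $a(\tau) \cup b(\tau)$ the flow points inward (these are "entering" sets for backward time, or equivalently "exit" sets for forward time), while on $o(\tau)$ the flow points outward. The hypotheses also guarantee that any trajectory staying in $\mathcal{T}(\tau)$ for all $t \le \tau$ must converge to the origin. So the characterization is essentially built in: $\bar{W}^u(\tau)$ should be the set of $\Q$ whose backward trajectory stays in $\mathcal{T}(\tau)$, and the content of the lemma is that this set is connected, compact, contains $\boldsymbol{O}$, and touches the edge $o(\tau)$.

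Let me describe the proof structure I would use.

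First I would set up the Ważewski framework in backward time. Reverse time by setting $s = -t$; then $\mathcal{T}(\tau)$ is a compact set, and the edges $a(\tau), b(\tau)$ become the immediate exit set $\mathcal{T}^-$ (points where the reversed flow immediately leaves), while $o(\tau)$ consists of points where the reversed flow enters. I would verify that $a(\tau) \cup b(\tau)$ is the eventual exit set and that it is closed relative to $\mathcal{T}(\tau)$; the technical condition on $\{\boldsymbol{A}(\tau), \boldsymbol{B}(\tau)\}$ pointing into the enlarged triangle $\hat{\mathcal{T}}(\tau)$ is precisely what guarantees that trajectories exit cleanly through the relative interior of $a(\tau) \cup b(\tau)$ rather than getting stuck at the corner vertices — this makes the exit set a Ważewski set with the needed closure property. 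I would then define $W^- = \{\Q \in \mathcal{T}(\tau) : \boldsymbol{x}(t,\tau;\Q) \in \mathcal{T}(\tau) \text{ for all } t \le \tau\}$, the set that never exits in backward time.

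The heart of the argument is the connectedness. The standard Ważewski principle gives that the exit-time map is continuous on the set of points that do eventually exit, and the exit set $a(\tau)\cup b(\tau)$ is contractible (it is an arc). If $W^-$ failed to meet $o(\tau)$, or if it separated into pieces, I would derive a contradiction by constructing a retraction of $\mathcal{T}(\tau)$ (or of a suitable sub-arc transverse to $o(\tau)$) onto the disconnected exit set $a(\tau)\cup b(\tau)$, contradicting a connectedness/no-retraction statement. Concretely: parametrize $o(\tau)$ by an arc; points near the vertex $\boldsymbol{O}$ stay in $\mathcal{T}(\tau)$ backward (since $\boldsymbol{O}$ is the critical point and the inward-flow conditions trap a neighborhood), while points near the far end of $o(\tau)$ must eventually exit through $a(\tau)\cup b(\tau)$. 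Following the trajectories, the map sending each exiting point to its exit point on $a(\tau)\cup b(\tau)$ is continuous; since $a(\tau)$ and $b(\tau)$ meet only through the vertex region and the domain arc $o(\tau)$ is connected, a standard intermediate-value / Ważewski retract argument forces the existence of a point of $o(\tau)$ in $W^-$, and moreover forces $W^-$ to contain a connected subcontinuum joining $\boldsymbol{O}$ to that point. I would take $\bar{W}^u(\tau)$ to be this connected component of $W^-$ containing $\boldsymbol{O}$.

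The main obstacle I anticipate is handling the low regularity: we only assume local uniqueness and continuity of $F$, not $C^1$, so I cannot freely invoke smooth invariant-manifold theory, and I must be careful that the exit-time function is genuinely continuous on the exit set and that the Ważewski retract machinery applies with merely continuous flow. The corner behavior at $\boldsymbol{A}(\tau), \boldsymbol{B}(\tau)$ is the delicate technical point — this is exactly why the enlarged triangle $\hat{\mathcal{T}}(\tau)$ and the inward condition at the vertices are imposed, and I would spend the most care verifying that these hypotheses make $a(\tau)\cup b(\tau)$ a legitimate (relatively closed) exit set so that the Ważewski principle yields a compact connected exit-avoiding set rather than merely a nonempty one. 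The convergence $\boldsymbol{x}(t) \to \boldsymbol{O}$ is then free from the final hypothesis.
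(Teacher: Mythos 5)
Your overall framework is the right one, and it is in fact the one the paper intends: the paper's own ``proof'' of this lemma is only a pointer to \cite[\S 3]{F6} and \cite[Lemma 3.5]{Fduegen}, where the argument is exactly a backward-time Wazewski construction whose two ingredients are (i) a connection argument based on two disjoint, relatively open exit sets (backward exit through $a(\tau)$ versus through $b(\tau)$, with the corner condition on $\mathcal{\hat T}(\tau)$ ensuring openness near the vertices), and (ii) a planar topological lemma taken from \cite[Lemma 4]{PZ}. You have (i) essentially right, modulo a geometric slip: $o(\tau)$ is the edge \emph{opposite} $\boldsymbol{O}$, with endpoints $\boldsymbol{A}(\tau)$ and $\boldsymbol{B}(\tau)$, so there are no points of $o(\tau)$ ``near the vertex $\boldsymbol{O}$'' that are trapped backward, as you claim; the correct dichotomy on the arc $o(\tau)$ is that points near $\boldsymbol{A}(\tau)$ exit backward through $b(\tau)$ and points near $\boldsymbol{B}(\tau)$ exit backward through $a(\tau)$, these two subsets of $o(\tau)$ being disjoint and relatively open (here local uniqueness, hence continuous dependence, and the $\mathcal{\hat T}$-condition enter), whence connectedness of $o(\tau)$ yields a point whose backward trajectory never leaves $\mathcal{T}(\tau)$.

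The genuine gap is ingredient (ii). You assert that the intermediate-value/retract argument ``moreover forces $W^-$ to contain a connected subcontinuum joining $\boldsymbol{O}$ to that point'', but this does not follow from anything you have set up. Knowing that your set $W^-$ is compact, contains $\boldsymbol{O}$, and meets $o(\tau)$ does not imply that the connected component of $W^-$ containing $\boldsymbol{O}$ reaches $o(\tau)$: a priori that component could be confined near the origin while $W^-\cap o(\tau)$ lies in a different component. Nor can this be repaired by ``constructing a retraction onto the disconnected exit set'': the exit-point map is undefined precisely on $W^-$, which is nonempty, so there is no globally defined retraction of $\mathcal{T}(\tau)$ to contradict. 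What is actually needed is a genuinely two-dimensional topological fact, and this is exactly what \cite[Lemma 4]{PZ} supplies: if no subcontinuum of $W^-$ joins $\boldsymbol{O}$ to $o(\tau)$, then (using that components of a compact set coincide with its quasi-components, i.e.\ a Whyburn-type separation) one splits $W^-$ into two disjoint compacta, one containing $\boldsymbol{O}$ and the other containing $W^-\cap o(\tau)$, and from this separation produces a continuum in $\mathcal{T}(\tau)\setminus W^-$ joining $a(\tau)$ to $b(\tau)$; every point of such a continuum exits backward either through $a(\tau)$ or through $b(\tau)$, and since these two sets are open, disjoint, and both meet the continuum, its connectedness is contradicted. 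Without this crossing-lemma step (or an equivalent unicoherence/Janiszewski argument), your proof establishes only that $W^-$ is a nonempty compact set meeting $o(\tau)$, not the existence of the compact \emph{connected} set $\bar{W}^u(\tau)$ joining $\boldsymbol{O}$ to $o(\tau)$ that the lemma asserts.
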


\begin{proof}
This Lemma is proved in~\cite[§~3]{F6}, see also~\cite[Lemma~3.5]{Fduegen}: the reasoning relies on
a connection argument and a topological idea developed in~\cite[Lemma~4]{PZ}.
\end{proof}
Obviously the same idea can be applied to construct stable sets.

\begin{lemma}\label{boxs}
Assume that local uniqueness for the solutions of~\eqref{equazione} is ensured for any
trajectory starting from $\mathcal{T}(\tau) \setminus \{\boldsymbol{O} \}$.

Suppose that the flow on $a(\tau)\cup b(\tau)$ points towards the exterior of $\mathcal{T}(\tau)$, and on
$o(\tau)$ points towards the interior of $\mathcal{T}(\tau)$ for any $t \ge \tau$. Assume further that the flow on
 $\{\boldsymbol{A}(\tau), \boldsymbol{B}(\tau) \}$ points towards the exterior of $\mathcal{\hat{T}}(\tau)$ for any $t \ge \tau$.
 Finally suppose that if a solution $\boldsymbol{x}(t)$ of~\eqref{equazione} satisfies $\boldsymbol{x}(t) \in T(\tau)$ for any $t \ge \tau$,
 then $\lit \boldsymbol{x}(t)=\boldsymbol{O}$.

 Then there is a compact connected set $\bar{W}^s(\tau)\subset \mathcal{T}(\tau)$ such that $\boldsymbol{O} \in \bar{W}^s(\tau)$,
 $\bar{W}^s(\tau) \cap o(\tau)
 \ne \emptyset$, with the following property:
$$
\bar{W}^s(\tau) \subset  \{ \Q  \mid   \lit \boldsymbol{x}(t,\tau;\Q) = \boldsymbol{O} \, , \;
 \boldsymbol{x}(t,\tau;\Q) \in \mathcal{T}(\tau) \; \textrm{ for any $t \ge \tau$} \} \,.
$$
\end{lemma}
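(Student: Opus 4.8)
The plan is to reduce Lemma~\ref{boxs} to the already established Lemma~\ref{boxu} by reversing time, rather than repeating the topological construction from scratch. First I would introduce the reversed time variable $s=-t$ and the reversed vector field $\tilde F(x,s):=-F(x,-s)$, so that $\boldsymbol{y}(s):=\boldsymbol{x}(-s)$ solves $\boldsymbol{y}'=\tilde F(\boldsymbol{y},s)$. The origin remains a critical point for the reversed system and local uniqueness is inherited from~\eqref{equazione}. Setting $\tilde\tau:=-\tau$, the forward half-line $t\ge\tau$ corresponds to the backward half-line $s\le\tilde\tau$, the limit $t\to+\infty$ corresponds to $s\to-\infty$, and at each point the reversed field is the opposite of the original one, so that every ``points towards the exterior'' statement becomes ``points towards the interior'' and conversely.

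Next I would verify that the triangle $\mathcal{T}(\tau)$, read at reversed time $\tilde\tau$, satisfies exactly the hypotheses of Lemma~\ref{boxu}. Under the sign flip the outward condition on $a(\tau)\cup b(\tau)$ for $t\ge\tau$ becomes the inward condition for $s\le\tilde\tau$; the inward condition on $o(\tau)$ becomes an outward one; the outward condition at the vertices $\boldsymbol{A}(\tau),\boldsymbol{B}(\tau)$ relative to $\mathcal{\hat{T}}(\tau)$ becomes the inward condition of Lemma~\ref{boxu}; and the confinement hypothesis, namely that $\boldsymbol{x}(t)\in\mathcal{T}(\tau)$ for all $t\ge\tau$ forces $\lit\boldsymbol{x}(t)=\boldsymbol{O}$, turns into the requirement that reversed trajectories remaining in $\mathcal{T}(\tau)$ for all $s\le\tilde\tau$ tend to $\boldsymbol{O}$ as $s\to-\infty$. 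Applying Lemma~\ref{boxu} to the reversed system then produces a compact connected set $\bar W^u(\tilde\tau)\subset\mathcal{T}(\tau)$ with $\boldsymbol{O}\in\bar W^u(\tilde\tau)$, $\bar W^u(\tilde\tau)\cap o(\tau)\neq\emptyset$, whose points have reversed trajectories that stay in $\mathcal{T}(\tau)$ for $s\le\tilde\tau$ and converge to $\boldsymbol{O}$. Undoing the substitution, $\boldsymbol{y}(s,\tilde\tau;\Q)=\boldsymbol{x}(-s,\tau;\Q)$ matches the initial data at $s=\tilde\tau$, so this set is precisely the desired $\bar W^s(\tau)$: its forward trajectories remain in $\mathcal{T}(\tau)$ for $t\ge\tau$ and satisfy $\lit\boldsymbol{x}(t,\tau;\Q)=\boldsymbol{O}$.

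Since this is a formal reduction, I do not expect a genuine obstacle; the only point needing real care is the bookkeeping of the orientation conditions under time reversal, especially the interplay between the two triangles $\mathcal{T}(\tau)$ and $\mathcal{\hat{T}}(\tau)$ and the vertex conditions at $\boldsymbol{A}(\tau),\boldsymbol{B}(\tau)$, together with checking that the convergence-to-$\boldsymbol{O}$ hypothesis is genuinely symmetric under $t\mapsto-t$. Should one prefer to avoid time reversal, the same conclusion follows by running directly the Wazewski-type connection argument of~\cite[Lemma~4]{PZ} on forward trajectories: the edge $o(\tau)$ is connected and the inward flow pushes its points into $\mathcal{T}(\tau)$, the two sets of initial data whose forward orbits eventually leave through $a(\tau)$ or through $b(\tau)$ are disjoint and relatively open, and the set of points whose forward orbits never leave $\mathcal{T}(\tau)$ --- hence converge to $\boldsymbol{O}$ by the confinement hypothesis --- must separate them, yielding a compact connected continuum joining $\boldsymbol{O}$ to $o(\tau)$.
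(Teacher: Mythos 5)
Your proposal is correct, and in substance it exploits the same stable/unstable duality that the paper relies on: the paper only proves Lemma~\ref{boxu} (by referring to the connection argument of \cite[Section~3]{F6}, \cite[Lemma~3.5]{Fduegen} and the topological idea of \cite[Lemma~4]{PZ}) and then disposes of Lemma~\ref{boxs} with the single remark that the same idea can be applied to construct stable sets, i.e.\ it invokes the symmetry without formalizing it. Your main route makes that symmetry rigorous as a black-box reduction: passing to $\boldsymbol{y}(s)=\boldsymbol{x}(-s)$, $\tilde F(x,s)=-F(x,-s)$, $\tilde\tau=-\tau$ flips every inward/outward condition (on $a(\tau)\cup b(\tau)$, on $o(\tau)$, and at the vertices relative to $\mathcal{\hat T}(\tau)$) and turns the forward confinement-implies-convergence hypothesis into the backward one, so Lemma~\ref{boxu} applied to the reversed system, with the triangle family defined by $\tilde{\mathcal T}(\tilde\tau):=\mathcal T(\tau)$ and $\tilde{\mathcal{\hat T}}(\tilde\tau):=\mathcal{\hat T}(\tau)$, produces exactly the set $\bar W^s(\tau)$; local uniqueness transfers to the reversed field, and the identification $\boldsymbol{y}(s,\tilde\tau;\Q)=\boldsymbol{x}(-s,\tau;\Q)$ then gives the stated forward characterization. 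What your reduction buys is economy: nothing topological needs to be re-checked, at the modest cost of the time-reversal bookkeeping, which you handle correctly. Your fallback argument (running the \cite{PZ}-type connection argument directly on forward trajectories: the sets of points escaping through $a(\tau)$ and through $b(\tau)$ are relatively open and disjoint, so a continuum of non-escaping points joins $\boldsymbol{O}$ to $o(\tau)$) is precisely the paper's intended proof, so either route is acceptable.
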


If we are in the position to apply invariant manifold theory for non-autono\-mous systems, clearly we find that
these sets are manifolds. So we get the following.

\begin{lemma}\label{box}
Assume that we are in the hypotheses of Lemma~\ref{boxu}, respectively of Lemma~\ref{boxs}. Assume further that
  $F$ is $C^1$ and it is continuous in $x$ uniformly with respect to $t \in \RR$. Suppose that the linearized system  admits
exponential dichotomy, i.e. there are projections $\mathcal P^+$ and $\mathcal P^-$ of rank $1$ such that~\eqref{dichotu} and
\eqref{dichots} hold, so that $\boldsymbol{O}$ admits unstable and stable manifolds $W^u(\tau)$ and $W^s(\tau)$ for any $\tau \in \RR$. Then
the set
$\bar{W}^u(\tau)\subset (\mathcal{T}(\tau)\cap W^u(\tau))$ constructed in Lemma~\ref{boxu}, resp. the set $\bar{W}^s(\tau)\subset (\mathcal{T}(\tau)\cap W^s(\tau))$ constructed in Lemma~\ref{boxs}, is a connected $1$-dimensional manifold.
\end{lemma}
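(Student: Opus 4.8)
The plan is to treat the unstable case coming from Lemma~\ref{boxu}; the stable case is identical after reversing time, using \eqref{dichots} in place of \eqref{dichotu} and Lemma~\ref{boxs}. The strategy is to upgrade the purely topological object $\bar{W}^u(\tau)$ to a smooth object by identifying it with a sub-arc of the genuine unstable manifold. First I would invoke the extra hypotheses: since $F$ is $C^1$, continuous in $x$ uniformly in $t$, and the linearization admits exponential dichotomy \eqref{dichotu}, the construction of Lemma~\ref{loc} (based on \cite{Co65,Jsell}) applies. It produces a local unstable manifold $W^u_{loc}(\tau)$ which is a $C^1$ graph over $\ell^u(\tau)\cap Q(\delta)$, tangent to $\ell^u(\tau)$ at $\boldsymbol{O}$, and characterized as the set of $\Q$ whose backward trajectory stays in $Q(\delta)$ and converges to $\boldsymbol{O}$. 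Flowing it forward exactly as in Lemma~\ref{global} yields the global manifold $W^u(\tau)$, namely the connected component through $\boldsymbol{O}$ of $\tilde{W}^u(\tau)=\{\Q\mid \lito \boldsymbol{x}(t,\tau;\Q)=\boldsymbol{O}\}$ (cf.~\eqref{charac}), which is a $1$-dimensional immersed manifold.

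Next I would prove the inclusion $\bar{W}^u(\tau)\subseteq W^u(\tau)$. By Lemma~\ref{boxu}, every $\Q\in\bar{W}^u(\tau)$ satisfies $\lito \boldsymbol{x}(t,\tau;\Q)=\boldsymbol{O}$, so $\Q\in\tilde{W}^u(\tau)$; since $\bar{W}^u(\tau)$ is connected and contains $\boldsymbol{O}$, it is forced into the connected component $W^u(\tau)$. Because $\bar{W}^u(\tau)$ meets $o(\tau)$ it contains points of $\mathcal{T}(\tau)\setminus\{\boldsymbol{O}\}$ arbitrarily close to $\boldsymbol{O}$; these lie on $W^u_{loc}(\tau)$, so the half-line of $\ell^u(\tau)$ entering the cone of $\mathcal{T}(\tau)$ at the vertex is well defined and $W^u(\tau)$ leaves $\boldsymbol{O}$ into $\mathcal{T}(\tau)$ along a single embedded branch.

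Finally I would read off the manifold structure. The portion of $W^u(\tau)$ lying in $\mathcal{T}(\tau)$ and issuing from $\boldsymbol{O}$ is embedded and homeomorphic to a half-open interval, and by the invariance forced by the flow on $a(\tau)\cup b(\tau)$ (inward for $t\le\tau$) this branch cannot leave $\mathcal{T}(\tau)$ except across $o(\tau)$. Hence $\bar{W}^u(\tau)$, being a compact connected subset of this branch containing $\boldsymbol{O}$ and a distinct point of $o(\tau)$, is a closed sub-arc, i.e. a connected $1$-dimensional manifold with boundary, contained in $\mathcal{T}(\tau)\cap W^u(\tau)$ as required. The hard part will be controlling the fact that $W^u(\tau)$ is only \emph{immersed}: it may be $8$-shaped at $\boldsymbol{O}$ (Remark~\ref{8shaped}) and may self-intersect globally, so a priori a connected compact subset need not be a simple arc. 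This obstacle is removed precisely by the local graph description near $\boldsymbol{O}$, which makes the manifold embedded there, together with the triangle $\mathcal{T}(\tau)$ selecting a single branch and the boundary flow conditions preventing the branch from re-entering, so that inside $\mathcal{T}(\tau)$ only an embedded sub-arc survives.
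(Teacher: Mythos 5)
Your proposal is correct and coincides with the paper's approach: the paper in fact gives no detailed proof of this lemma, asserting only that once the dichotomy hypotheses make invariant manifold theory applicable, the Wazewski sets of Lemmas~\ref{boxu} and~\ref{boxs} lie inside $W^u(\tau)$ and $W^s(\tau)$ and are therefore manifolds. Your argument supplies exactly this identification --- the backward-convergence characterization forcing $\bar{W}^u(\tau)$ into the connected component of the origin, the local graph structure of $W^u_{loc}(\tau)$ near $\boldsymbol{O}$, and the triangle with its boundary flow conditions isolating a single embedded branch so that a compact connected subset containing $\boldsymbol{O}$ and a point of $o(\tau)$ must be a closed sub-arc --- i.e. the details (including the immersed-versus-embedded issue of Remark~\ref{8shaped}) that the paper leaves to the reader.
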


\end{document}